\pgfplotsset{compat=1.12}
\def \dis{\displaystyle}
\def \R{\mathbb{R}} 
\def \N{\mathbb{N}}
\def \N0{\mathbb{N}_0}
\def \inN{\in \mathbb{N}}
\def \a{\alpha}
\def \d{\delta}
\def \e{\varepsilon}
\def \W{\Omega}
\def \phi{\varphi}
\def \12{\dis\frac{1}{2}}
\def \1{\mathbbm{1}}
\def \<{\left<}
\def \>{\right>}
\def \mA{\mathcal{A}}
\def \mAe{\mathcal{A}^\e}
\def \mAze{\mathcal{A}_0^\e}
\def \Th{\mathcal{T}_h}
\def \mL{\mathcal{L}}
\def \mK{\mathcal{K}}
\def \mKinv{\mathcal{K}^{-1}}
\def \mLe{\mathcal{L}^{\varepsilon}}
\def \mLhe{\mathcal{L}_h^{\varepsilon}}
\def \mLhes{(\mathcal{L}_h^{\varepsilon})^*}
\def \mLz{\mathcal{L}_0}
\def \mLze{\mathcal{L}_0^{\varepsilon}}
\def \mLzhe{\mathcal{L}_{0,h}^{\varepsilon}}
\def \grad{\nabla}
\def \lss{\lesssim}
\def \gss{\gtrsim}
\def \wto{\rightharpoonup}
\def \ue{u^\e}
\let\div\undefined
\DeclareMathOperator*{\div}{div}
\def \dx[#1]{\ensuremath{\operatorname{d}\!{#1}}}
\newtheorem{defn}{Definition}
\numberwithin{defn}{section}
\newtheorem{remark}{Remark}
\numberwithin{remark}{section}
\begin{document}
	
\title{Analysis of the Vanishing Moment Method and its Finite Element Approximations for Second-order Linear Elliptic PDEs in Non-divergence Form\thanks{This work was partially supported by the NSF grant
		 DMS-1620168.}  This paper  is dedicated to  Professor Roland Glowinski on the occasion of his eightieth birthday.}

\author{Xiaobing Feng\thanks{Department of Mathematics, The University of Tennessee, Knoxville, TN 37996, U.S.A. (xfeng@math.utk.edu).}
	\and
	Thomas Lewis\thanks{Department of Mathematics and Statistics, The University of North Carolina at Greensboro, Greensboro, NC 27412, U.S.A. 
	(tllewis3@uncg.edu).}
	\and 
	Stefan Schnake\thanks{Department of Mathematics, The University of Oklahoma, Norman, OK 73019, U.S.A. (sschnake@ou.edu).}
}

\maketitle
\date{\today}

\begin{abstract}
This paper is concerned with continuous and discrete approximations of $W^{2,p}$ strong solutions of second-order linear elliptic partial differential 
equations (PDEs) in non-divergence form.  The continuous approximation of these equations is achieved through the Vanishing Moment Method (VMM) 
which adds a small biharmonic term to the PDE.  The structure of the new fourth-order PDE is a natural fit 
for Galerkin-type methods unlike the original second order equation since the highest order term is in divergence form.  
The well-posedness of the weak form of the perturbed fourth order equation is shown 
as well as error estimates for approximating the strong solution of the original second-order PDE.  
A $C^1$ finite element method is then proposed for the fourth order equation, 
and its existence and uniqueness of solutions as well as optimal error estimates in the $H^2$ norm are shown.  Lastly, numerical tests are given to show 
the validity of the method. 
\end{abstract}

\begin{keywords}
	Elliptic PDEs in non-divergence form,  strong solution,  vanishing moment method,  $C^1$ finite element method, discrete Calderon-Zygmund estimate.
  \end{keywords}

\begin{AMS}
	65N12, %Stability and convergence of numerical methods
	65N15  %Error bounds
	65N30  % FInite element methods
\end{AMS}

\pagestyle{myheadings}
\thispagestyle{plain}
\markboth{XIAOBING FENG AND THOMAS LEWIS AND STEFAN SCHNAKE}{THE VMM FOR NON-DIVESRGENCE FORM ELLIPTIC PDES}
 
%%%%
\section{Introduction} \label{section1}

In this paper, we propose $C^1$ finite element approximations of the following second-order linear elliptic PDE in non-divergence form:
\begin{align} \label{eqn1.1} \tag{$P$}
\begin{split} 
\mL u:= -A:D^2u+b\cdot\grad u + cu &= f \qquad \text{ in } \W, \\ 
u &= 0 \qquad \text{ on } \partial\W,
\end{split}
\end{align} \addtocounter{equation}{1}
where %$A = (a_{ij})\in [C(\overline{\W})]^{n\times n},$ $b\in [L^{\infty}(\W)]^n,$ $c\in L^{\infty}(\W),$ $f\in L^2(\W),$ and 
$\W$ is an open and bounded domain in $\R^n$ and $\partial\W$ denotes its boundary.  These non-divergence form PDEs have several applications including game theory, stochastic optimal control, and mathematical finance \cite{SP:Flem}.  Moreover, non-divergence PDEs explicitly appear in several second-order fully nonlinear 
PDEs such as Hamilton-Jacobi-Bellman and Issac's equations as well as in the linearization of the Monge-Amp\`ere equation \cite{Crandall_Ishii_Lions,AM:CG}.

When the coefficient matrix $A$ is not smooth, \eqref{eqn1.1} cannot be written in divergence form.  Thus, any standard notion of weak solutions to \eqref{eqn1.1} must be abandoned, and, indeed, the PDE theory respects this observation and seeks well-posedness of these equations in a stronger sense.  There have been three main theories for the existence and uniqueness of these equations.  First, Schauder (or classical) theory seeks solutions in $C^{2,\a}(\W)$ where $a_{ij},b_i,c,f\in C^{\a}(\W)$ and $\partial\W\in C^{2,\a}$.  Second, strong solution theory seeks solutions in $W^{2,p}(\W)\cap W_0^{1,p}(\W)$ that satisfy the PDE almost everywhere. 
There have been two frameworks that guarantee unique strong solutions.  The first requires $a_{ij}\in C(\overline{\W})$, $b_i\in L^{\infty}(\W),$ $c\in L^{\infty}(\W),$ and  $f\in L^p(\W),$ with $1< p < \infty$ and $\partial\W\in C^{1,1}$ while the second requires $p=2$, $f\in L^2(\W)$, $\partial\W$ convex, and $a_{ij}\in L^{\infty}(\W)$ where the matrix $A$ satisfies the C\'{o}rdes condition \cite{Sp:DT,SmearsSuli}.  The last theory, called viscosity solution theory, seeks solutions in $C^0(\overline{\W})$ given $a_{ij},b_i,c\in L^{\infty}(\W)$ and $f\in C(\W)$, where the underlying viscosity solutions satisfy the PDE in a much weaker sense \cite{Crandall_Ishii_Lions}.  

Due to the lack of a divergence structure, constructing convergent numerical methods for \eqref{eqn1.1}, especially finite element methods, is not obvious.  
Only a handful of Galerkin-type methods have been developed, and these methods did not appear in the literature until quite recently \cite{Feng2017SS,AX:WW,AX:FN,SmearsSuli,NochettoZhang16}.  
All of these methods, however diverse they are in their construction, share a common thread: a (nonstandard) direct discretization of \eqref{eqn1.1}.  The method we propose, however, is based upon the Vanishing Moment Method (VMM) - a method developed by Feng and Neilan in \cite{Feng2009} for second-order fully nonlinear PDEs such as the Hamilton-Jacobi-Bellman and Monge-Amp\`ere equations.  The main solution concept for these equations is that of the viscosity solution which requires passing the derivatives of the solutions to functions that locally lie above or below the graph of the solution \cite{Crandall_Ishii_Lions}.  This notion of a solution is not natural in the Galerkin framework since it is not based on integration by parts.  The VMM seeks to approximate \eqref{eqn1.1} by a fourth order, quasi-linear PDE where the fourth order term is a ``nice'' operator, such as the biharmonic operator.  Since this new PDE is in divergence form if the biharmonic operator is chosen, it can be readily adapted to a weak solution concept and, more importantly, allow the natural formulation of Galerkin-type methods.    In our case of non-divergence form PDEs, the VMM is given by
\begin{align} \label{eqn1.4a}   \tag{$P_{\e}$}
\begin{split}  
\e\Delta^2u^\e - A:D^2u^\e+b\cdot\grad u^\e + cu^\e &= f \qquad \text{ in } \W, \\ 
u^\e &= 0 \qquad \text{ on } \partial\W,  \\
\Delta u^\e &= 0 \qquad \text{ on } \partial\W.
\end{split}
\end{align} \addtocounter{equation}{1}
While strong solution theory for \eqref{eqn1.1} is not as weak as viscosity solution theory for fully nonlinear PDEs, by first applying the VMM to \eqref{eqn1.1}, the 
resulting  approximate equation \eqref{eqn1.4a}, whose highest order derivative is in divergence form, can be discretized using a variety of conforming and nonconforming finite element methods.  These numerical solutions will converge to the solution of \eqref{eqn1.1} as $\e\to 0$.  Thus, from a numerical standpoint, the application of the VMM is just as applicable to non-divergence PDEs as they are to fully nonlinear PDEs.  

Several papers have be written on ways to formally construct solutions to fully nonlinear PDEs using the Vanishing Moment Method \cite{Feng2009,Feng2009a}.  This paper is the first to offer a detailed analysis of the VMM for a particular class of PDEs.  Moreover, since non-divergence PDEs have close ties to many popular second-order fully nonlinear PDEs, they serve as a natural starting point for the complete analysis of the Vanishing Moment Method.  

The goals of this paper are to provide a detailed PDE analysis of the Vanishing Moment Method when it is applied to second order elliptic linear PDEs in non-divergence form
and to study its finite element approximations.  This analysis requires showing that the solution $u^\e$ of \eqref{eqn1.4a} exists and is unique; proving $u^\e\to u$ as $\e\to 0$, where $u\in H^2(\W)\cap H_0^1(\W)$ is the strong solution to \eqref{eqn1.1}; and formulating error estimates for $\|u^\e-u\|$ in powers of $\e$.   
We also formulate a $C^1$ conforming finite element method for approximating the weak solution to \eqref{eqn1.4a} and derive its error estimate in the energy-norm as 
well as provide some numerical experiments that test the method and theory.  We note that the PDE results given for the VMM will be crucial 
for the analysis of any Galerkin-type scheme developed for \eqref{eqn1.4a}, including our $C^1$ scheme.  

The rest of our paper is organized as follows.  In Section 2, we set the notation and provide the preliminary information about the well-posedness and stability of \eqref{eqn1.1}.  In Section 3, we formally introduce and analyze the VMM applied to \eqref{eqn1.1}.  
%We show the existence and uniqueness of a weak solution $u^\e$ in $H^2(\W)\cap H_0^1(\W)$ to \eqref{eqn1.4a}, derive a stability estimate mimicking the stability estimate of $\mL$, 
%%% This is done using the freezing coefficients technique from \cite{Sp:DT} to create a stability estimate independent of $\e$. 
%%%which allows us to show $\Delta u^\e\in H_0^1(\W)$, valuable for creating non-conforming finite element methods to discretize \eqref{eqn1.4a}.  More importantly, the estimate gives us the convergence of $u^\e$ to the strong solution $u$ of \eqref{eqn1.1}.  
%and derive error estimates for $u^\e - u$ in certain norms.  
In Section 4, we propose a simple $C^1$ conforming finite element method for our fourth order equation as well as give some tests showing the convergence of the method.  

\section{Notation and Preliminaries}

Let $\W\subset \R^n$ be an open and bounded domain.  Consider a subdomain $D\subset \W$.  Let $L^2(D)$ and $H^k(D)$ be the standard Lebesgue and Sobolev spaces with their respective norms.  We define $(\cdot,\cdot)_D$ to be the $L^2$-inner product for all scalar and vector valued functions with $(\cdot,\cdot):=(\cdot,\cdot)_\W$.  In addition, we let $H_0^k(\W)$ be the completion of all compactly supported smooth functions in $H^k(D)$ and $H^{-1}(D)$ be the dual space of $H_0^1(D)$.  
Lastly, define the space $H$ by 
\[
	H:=\{v\in H^2(\W)\cap H_0^1(\W) \text{ with } \Delta v\in H_0^1(\W)\}.
\] 

As mentioned in the introduction, there are three main well-posedness theories for \eqref{eqn1.1}.  We are choosing to focus on approximating 
$W^{2,p}$ strong solutions for linear elliptic PDEs.  To this end,
%the following: If 
we assume that $A\in \left[C(\overline{\W})\right]^{n\times n}$ is uniformly elliptic; that is, there exist constants $0<\lambda\leq \overline{\lambda}$ such that
\begin{align}\label{eqn1.2}
\lambda |\xi|^2 \leq A(x)\xi\cdot\xi \leq \overline{\lambda}|\xi|^2\quad\quad \forall \xi\in\R^n,x\in\overline{\W} 
\end{align} 
 and $c\geq 0$ a.e.\ in $\W$.  Let $f\in L^p(\W)$. From \cite{Sp:DT} we have that \eqref{eqn1.1} exhibits a unique strong solution 
 $u\in W^{2,p}(\W)\cap W^{1,p}_0(\W)$  for $1<p<\infty$ that satisfies the PDE almost everywhere.  Moreover, we have the stability result
\begin{align} \label{eqn1.3}
\|u\|_{W^{2,p}(\W)} \lss \| \mL u \|_{L^p(\W)} .
\end{align}
Here and in the rest of the paper we use $a\lss b$ to denote $a\leq Cb$ for some constant $C>0$ independent of 
relevant parameters.
Estimate \eqref{eqn1.3} is called the Colder\'{o}n-Zygmond estimate for $\mL$.   
For simplicity of presentation, we will assume for the remainder of the paper that $b,c \equiv 0$.  

For the finite element method developed in Section 4, given an $h>0$, we let $\Th$ be a quasi-uniform and shape-regular mesh of $\W$.  We set $V_h\subset H^2(\W)\cap H_0^1(\W)$ to be a $C^1$ conforming finite element space over $\Th$ satisfying
\[
 V_h = \left\{v_h\in C^1(\overline\W)\cap H_0^1(\W) : v_h\big|_{T} \in \mathbb{P}_k(T)\ \forall T\in\Th \right\} , 
\]
where $\mathbb{P}_k(T)$ is the set of all polynomials of total degree less than or equal to $k$.  Examples of such spaces include those defined 
using the cubic Hermite element in $1$-D where, $k\geq 3$, or the Argyris element in $2$-D, where $k\geq 5$ \cite{Sp:BS}.  Given a sub-domain $D\subset\W$, we define
\[
 V_h = \left\{v_h\in V_h : v_h = 0 \text{ on }\W\setminus D \right\} 
\]
and remark that $V_h(D)$ is non-trivial provided $h\leq \frac{1}{3}\mbox{diam}(D)$.  

Define the discrete $L^2$ norm by
\begin{align}\label{discretedualL2}
	\| v \|_{L_h^2(D)} : = \sup_{w_h \in V_h(D)\setminus\{0\}}\frac{(v,w_h)_D}{\|w_h\|_{L^2(D)}}.
\end{align}
Clearly we have
\[
 \| v_h \|_{L^2(D)} \leq \| v_h \|_{L_h^2(D)} \quad\forall v_h\in V_h .  
\]
There also holds 
\[
 \| v \|_{L_h^2(D)} \lss \| v \|_{L^2(D)} \quad\forall v\in L^2(D) . 
\]
Lastly, define the discrete $H^{-2}$ norm by
\begin{align}\label{discretedualH2}
	\|v\|_{H_h^{-2}(D)} = \sup_{w_h \in V_h(D)\setminus\{0\}}\frac{(v,w_h)}{\|w_h\|_{H^2(D)}}.
\end{align}
For notational convenience, we will often write $w_h\in V_h(D)$ instead of $w_h\in V_h(D)\setminus\{0\}$ in \eqref{discretedualL2} and \eqref{discretedualH2}.

\section{The Vanishing Moment Method and its Analysis}

\subsection{ Construction of the Vanishing Moment Method}

The Vanishing Moment Method is an approximation technique originally developed for second-order fully nonlinear PDEs \cite{Feng2009}.  
The approximation corresponds to converting the original second-order equation into a quasi-linear fourth order equation.  Let $\e>0$.  
In the context of our problem, the solution $u$ to \eqref{eqn1.1} 
will be approximated by the solution $u^\e$, where $u^\e$ satisfies the fourth order problem:
\begin{align} \label{eqn1.4}  \tag{$P_{\e}$}
\begin{split} 
\mLe u^\e := \e\Delta^2u^\e - A:D^2u^\e &= f \qquad\text{ in } \W, \\ 
u^\e &= 0 \qquad \text{ on } \partial\W,  \\
\Delta u^\e &= 0 \qquad \text{ on } \partial\W.
\end{split}
\end{align} \addtocounter{equation}{1}

Since \eqref{eqn1.4} is a fourth order equation, an additional boundary condition must be added in order to guarantee a unique solution.  
We add the simply supported boundary condition $\Delta u^\e=0$ for this particular method, due to being a natural boundary condition for the biharmonic equation; moreover, it allows us to achieve $H^2$ estimates near $\partial\W$.  We refer to \cite{Feng2009} for other 
boundary conditions that may be used.  

Using the fact that the highest order derivative of \eqref{eqn1.4} is in divergence form, we can easily define a weak solution concept for \eqref{eqn1.4} as follows.

\begin{defn}
A function  $u^\e\in H^2(\W)\cap H_0^1(\W)$ is called a weak solution to \eqref{eqn1.4} if it satisfies 
\begin{align} \label{eqn:weaksoln}
\e \bigl( \Delta u^\e, \Delta v\bigr)- \bigl( A:D^2u^\e, v\bigr) = (f, v)  \qquad\forall v\in H^2(\W)\cap H_0^1(\W).
\end{align}
\end{defn}
Note that the simply supported boundary condition is naturally absorbed into the weak formulation.  
 
% %%%%%%%%%%%
\subsection{Stability Estimates for $\mLe$ } \label{subsection3.1}

We now give the analysis required to show an estimate similar to the Calderon-Zygmund estimate \eqref{eqn1.3} but for the  operator $\mLe$ by using a freezing coefficient technique.  
This technique, used in \cite[Chapter 9]{Sp:DT} by Gilbarg-Trudinger to show \eqref{eqn1.3}, relies on that fact that since $A$ is continuous, it is approximately 
constant over small balls.  For $A$ constant, the non-divergence operator $-A:D^2u$ is merely a change of basis and dilation away from the Laplacian $-\Delta u$.  
Since \eqref{eqn1.3} is true for $\mL=-\Delta$, one can argue the estimate holds for all $A$ constant and holds locally for all $A$ continuous over a small ball.  
Then, using a partition of unity argument and cut-off functions, the estimate can be shown to hold over all of $\W$.  Section \ref{section3.1.2} shows the global 
Calderon-Zygmund estimates for $\mLe$ for constant coefficient $A$ while Section \ref{section3.2.2} shows the analogous estimates for continuous coefficient $A$.  

\subsubsection{Estimates for Constant Coefficient Operators} \label{section3.1.2}

First we consider the case $A=A_0$ for $A_0$ a constant matrix satisfying \eqref{eqn1.2}.  This gives us the following two problems: 

\begin{align} \label{eqn2.1} \tag{$P'$}
\begin{split} 
\mLz u:= - A_0:D^2u &= f \qquad  \text{ in } \W, \\ 
u &= 0 \qquad \text{ on } \partial\W
\end{split}
\end{align} \addtocounter{equation}{1}
and
\begin{align}  \label{eqn2.2} \tag{$P_{\e}'$}
\begin{split} 
\mLze u^\e:= \e\Delta^2u^\e - A_0:D^2u^\e &= f \qquad \text{ in } \W, \\ 
u^\e &= 0 \qquad\text{ on } \partial\W, \\
\Delta u^\e &= 0 \qquad \text{ on } \partial\W.
\end{split}
\end{align} \addtocounter{equation}{1}

It should be noted that since $A_0:D^2u = \div(A_0\grad u)$, we immediately recover weak solutions $u\in H_0^1(\W)$ and $u^\e\in H^2(\W)\cap H_0^1(\W)$ to \eqref{eqn2.1} and \eqref{eqn2.2}, respectively.  We seek to derive $H^1$ estimates for $u^\e$.

\begin{lemma} \label{lem2.1}
For $\e>0$, let $u^\e\in H^2(\W)\cap H_0^1(\W)$ be the weak solution of \eqref{eqn2.2}.  Then there holds the following estimate:
\begin{align}\label{eqn2.3}
\sqrt{\e}\|\Delta u^\e \|_{L^2(\W)} + \sqrt{\lambda}\| \grad u^\e \|_{L^2(\W)}  \lss \| f \|_{H^{-1}(\W)}.
\end{align}
Moreover, $u^\e\rightharpoonup u$ weakly in $H^1(\W)$, where $u$ is the weak solution to \eqref{eqn2.1}. 
\end{lemma}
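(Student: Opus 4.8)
\emph{Proof proposal.} The plan is to obtain the a priori bound \eqref{eqn2.3} by a standard energy argument and then pass to the weak limit as $\e\to0$. Existence and uniqueness of the weak solution $u^\e$ itself are not really the issue (they follow from Lax--Milgram applied to the bounded, coercive bilinear form $\e(\Delta\,\cdot\,,\Delta\,\cdot\,)+(A_0\grad\,\cdot\,,\grad\,\cdot\,)$ on $H^2(\W)\cap H_0^1(\W)$, coercivity coming from \eqref{eqn1.2}), as already remarked. For the estimate I would test the weak formulation \eqref{eqn:weaksoln} (with $A=A_0$) against $v=u^\e\in H^2(\W)\cap H_0^1(\W)$. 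Because $A_0$ is constant we have $A_0:D^2u^\e=\div(A_0\grad u^\e)$, so an integration by parts on the second-order term — legitimate since $u^\e\in H_0^1(\W)$ — gives
\[
\e\,\|\Delta u^\e\|_{L^2(\W)}^2+\bigl(A_0\grad u^\e,\grad u^\e\bigr)=(f,u^\e).
\]
The ellipticity \eqref{eqn1.2} bounds the second term below by $\lambda\|\grad u^\e\|_{L^2(\W)}^2$, and the right-hand side is at most $\|f\|_{H^{-1}(\W)}\|u^\e\|_{H^1(\W)}\lss\|f\|_{H^{-1}(\W)}\|\grad u^\e\|_{L^2(\W)}$ by the Poincar\'e inequality; Young's inequality, absorbing the gradient factor, then yields \eqref{eqn2.3}.

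For the convergence, \eqref{eqn2.3} shows $\{u^\e\}$ is bounded in $H_0^1(\W)$ independently of $\e$, so along any sequence $\e\to0$ there is a subsequence with $u^\e\wto w$ in $H^1(\W)$ for some $w\in H_0^1(\W)$. Rewriting \eqref{eqn:weaksoln} (after the same integration by parts) as
\[
\e\,(\Delta u^\e,\Delta v)+\bigl(A_0\grad u^\e,\grad v\bigr)=(f,v)\qquad\forall v\in H^2(\W)\cap H_0^1(\W),
\]
I would pass to the limit: the first term is at most $\sqrt{\e}\,\bigl(\sqrt{\e}\,\|\Delta u^\e\|_{L^2(\W)}\bigr)\|\Delta v\|_{L^2(\W)}\lss\sqrt{\e}\,\|f\|_{H^{-1}(\W)}\|\Delta v\|_{L^2(\W)}\to0$ by \eqref{eqn2.3}, while the second converges to $(A_0\grad w,\grad v)$ since $\grad u^\e\wto\grad w$ in $L^2(\W)$. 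Hence $(A_0\grad w,\grad v)=(f,v)$ for all $v\in H^2(\W)\cap H_0^1(\W)$, and since $H^2(\W)\cap H_0^1(\W)$ is dense in $H_0^1(\W)$ this extends to all $v\in H_0^1(\W)$, so $w$ is \emph{the} weak solution of \eqref{eqn2.1}; thus $w=u$ by uniqueness. As the limit is independent of the chosen subsequence, the whole family satisfies $u^\e\wto u$ in $H^1(\W)$.

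The argument is essentially routine, so I do not expect a genuine obstacle; the two places that need a little care are the integration by parts on $-A_0:D^2u^\e$, which works precisely because for constant $A_0$ this term is in divergence form, and the density step used to identify the weak limit $w$ as a bona fide weak solution of \eqref{eqn2.1}, since in \eqref{eqn:weaksoln} the test functions lie only in $H^2(\W)\cap H_0^1(\W)$.
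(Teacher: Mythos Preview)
Your proposal is correct and follows essentially the same route as the paper: test \eqref{eqn:weaksoln} with $u^\e$, use that $A_0:D^2u^\e=\div(A_0\grad u^\e)$ plus ellipticity and Poincar\'e--Young to get \eqref{eqn2.3}, then extract a weak $H^1$-limit and pass $\e\to0$ in the variational identity. The only cosmetic difference is that the paper tests against $\phi\in C_0^\infty(\W)$ when identifying the limit, whereas you test against $v\in H^2(\W)\cap H_0^1(\W)$ and then invoke density; both are fine.
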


\begin{proof}
Testing \eqref{eqn2.2} by $u^\e$, using the ellipticity condition \eqref{eqn1.2}, and applying 
integration by parts, we have 
\begin{align*}
\e\|\Delta u^\e \|_{L^2(\W)}^2 + \lambda\| \grad u^\e \|_{L^2(\W)}^2 &\leq \e(\Delta u^\e,\Delta u^\e) + (A_0\grad u^\e,\grad u^\e) \\ 
&= (f,u^\e) \\
&\leq \|f \|_{H^{-1}(\W)}\|u^\e \|_{H^{1}(\W)} \\
&\lss \|f \|_{H^{-1}(\W)}\|\grad u^\e \|_{L^2(\W)} \\
&\leq \frac{1}{2\d}\|f \|_{H^{-1}(\W)}^2 + \frac{\d}2 \|\grad u^\e \|_{L^2(\W)}^2 
\end{align*}
for any $\delta > 0$.  
Choosing $\d$, independent of $\e$, sufficiently small allows us to move $\|\grad u^\e \|_{L^2(\W)}$ on the right side 
to the left and obtain \eqref{eqn2.3}.  This estimate immediately gives us the boundedness of $\{u^\e\}$ in $H_0^1(\W)$.  
Thus, by the weak compactness of $H_0^1(\W)$, there exists a subsequence  $\{u^\e\}$ (not relabeled) and $u^*\in H_0^1(\W)$ such that 
$u^\e\rightharpoonup u^*$ weakly in $H^1(\W)$ as $\e \to 0$.  
Since $u^\e$ is a weak solution of \eqref{eqn2.2}, we have for every $\phi\in C_0^{\infty}(\W)$
\begin{align}\label{eqn2.3.1}
\e(\Delta u^\e,\Delta\phi) + (A_0\grad u^\e,\grad\phi) = (f,\phi) . 
\end{align}
Using the weak convergence of $u^\e\rightharpoonup u^*$, 
the uniform boundedness of $\e\|\Delta u^\e \|_{L^2(\W)}^2$, 
and passing $\e\to 0$ in \eqref{eqn2.3.1}, we obtain
\[
	(A_0\grad u^*,\grad\phi) = (f,\phi).
\]
Thus, $u^*$ is a weak solution of \eqref{eqn2.1}.  By uniqueness we have $u^* = u$, and the whole sequence 
$u^\e\rightharpoonup u$ weakly in $H^1(\W)$.  The proof is complete.
\end{proof}

We now wish to derive local $H^2$ stability estimates for $\mLe$.  

\begin{lemma} \label{lem2.2}
Let $B\subset\subset\W$ and $v\in H$ with $\supp(v)\subset B$.  
The following estimate holds: 
\begin{align} \label{eqn2.4}
\sqrt{\e}\|\grad\Delta v \|_{L^2(B)} + \sqrt{\lambda}\| D^2 v \|_{L^2(B)} \lss \|\mLze v \|_{L^2(B)} . 
\end{align}
\end{lemma}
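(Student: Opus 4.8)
The plan is to prove \eqref{eqn2.4} by a direct energy estimate, testing the constant-coefficient equation $\mLze v=f$ against $-\Delta v$ over $B$. We may assume $f:=\mLze v\in L^2(B)$, since otherwise \eqref{eqn2.4} is vacuous; because $A_0:D^2v\in L^2(B)$ this forces $\Delta^2 v\in L^2(B)$. Moreover $v\in H$ already yields enough regularity to justify all the integrations by parts below: each $\partial_k v$ lies in $H^1(\W)$ and solves $-\Delta(\partial_k v)=-\partial_k\Delta v\in L^2(\W)$, so interior elliptic regularity gives $\partial_k v\in H^2(B)$, i.e.\ $v\in H^3(B)$ with compact support in $B$ (equivalently one may run the computation for smooth compactly supported approximations and pass to the limit). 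All the integrations by parts then produce no boundary terms.

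Pairing $f=\e\Delta^2 v-A_0:D^2 v$ with $-\Delta v$ over $B$, the fourth-order term gives, via Green's identity applied to $w:=\Delta v\in H_0^1(\W)$ (legitimate since $\Delta w=\Delta^2 v\in L^2$), the contribution $\e(\Delta^2 v,-\Delta v)_B=\e\|\grad\Delta v\|_{L^2(B)}^2$. For the second-order term the crucial ingredient is the integration-by-parts identity, valid for compactly supported $H^3$ functions,
\[
\int_B \partial_{ij}v\,\partial_{kk}v\,dx=\int_B\partial_{ik}v\,\partial_{jk}v\,dx\qquad\text{for all }i,j,k,
\]
obtained by shifting $\partial_j$ onto $\partial_{kk}v$ and then shifting one $\partial_k$ back. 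Multiplying by the constant entries $(A_0)_{ij}$, summing in $i,j$ and $k$, and using the ellipticity \eqref{eqn1.2} applied to each vector $\grad(\partial_k v)$,
\[
(A_0:D^2 v,\Delta v)_B=\sum_{k}\int_B A_0\,\grad(\partial_k v)\cdot\grad(\partial_k v)\,dx\ \ge\ \lambda\sum_k\|\grad(\partial_k v)\|_{L^2(B)}^2=\lambda\|D^2 v\|_{L^2(B)}^2 .
\]
The same identity with $j=i$ also yields $\|\Delta v\|_{L^2(B)}=\|D^2 v\|_{L^2(B)}$. Putting these together,
\[
\e\|\grad\Delta v\|_{L^2(B)}^2+\lambda\|D^2 v\|_{L^2(B)}^2\ \le\ (f,-\Delta v)_B\ \le\ \|f\|_{L^2(B)}\|\Delta v\|_{L^2(B)}=\|f\|_{L^2(B)}\|D^2 v\|_{L^2(B)} .
\]

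Absorbing the factor $\|D^2 v\|_{L^2(B)}$ on the right with Young's inequality leaves $\e\|\grad\Delta v\|_{L^2(B)}^2+\|D^2 v\|_{L^2(B)}^2\lss\|f\|_{L^2(B)}^2$, and taking square roots gives \eqref{eqn2.4}, the implied constant depending only on $\lambda$ (and $n$), which is harmless for the subsequent $\e$-uniform analysis. I do not anticipate a genuine obstacle here: the only care needed is in justifying the integrations by parts for $v\in H$ (handled by the $H^3(B)$ regularity above) and in rewriting the cross term $(A_0:D^2 v,\Delta v)_B$ so that ellipticity controls the full Hessian $D^2 v$ rather than merely $\Delta v$. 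As a shorter alternative — available precisely because $A_0$ is constant — one can extend $v$ by zero to $\R^n$ and argue by Plancherel: the Fourier symbol $\e|\xi|^4+A_0\xi\cdot\xi\ge\e|\xi|^4+\lambda|\xi|^2$ dominates, by the AM--GM inequality, a fixed multiple of both $\sqrt{\e}\,|\xi|^3$ and $\sqrt{\lambda}\,|\xi|^2$ pointwise, and these two multiplier bounds translate directly into \eqref{eqn2.4}.
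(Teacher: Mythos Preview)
Your proof is correct and follows essentially the same approach as the paper: test $\mLze v$ against $-\Delta v$, extract $\e\|\grad\Delta v\|_{L^2(B)}^2$ from the fourth-order term, establish the lower bound $(A_0:D^2 v,\Delta v)_B\ge\lambda\|D^2 v\|_{L^2(B)}^2$, and absorb via Young's inequality. The only minor difference is in how the lower bound on the cross term is obtained: the paper first diagonalizes $A_0$ by an orthogonal change of variables (reducing to $A_0=\Lambda$ diagonal), whereas you bypass this by the direct integration-by-parts identity $\int_B\partial_{ij}v\,\partial_{kk}v=\int_B\partial_{ik}v\,\partial_{jk}v$ and apply ellipticity to each $\grad(\partial_k v)$ --- both routes are standard and equivalent here.
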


\begin{proof}
By testing $\mLze v$ by $-\Delta v$ and integrating by parts, we get
\begin{align}\label{eqn2.5}
(\mLze v,-\Delta v) = (\e \Delta^2 v-A_0:D^2v,-\Delta v) = \e\|\grad\Delta v \|_{L^2(B)}^2 + (A_0:D^2v,\Delta v).
\end{align}
Since $A_0$ is symmetric and positive definite, there exists an orthogonal matrix $Q\in\R^{n\times n}$ such that 
$Q^TA_0Q = \diag(\lambda_1,\lambda_2,\ldots,\lambda_n) =: \Lambda$, where $\lambda\leq \lambda_1\leq \lambda_2 \leq \cdots\leq \lambda_n$.  
Let $y=Q^T x$ and $\hat{v}(y) = v(Qy) = v(x)$.  Since the Laplacian is preserved under an orthogonal change of basis, we have the following:
\begin{align*}
\Delta_x v(x) &= \Delta_y \hat{v}(y), & \Delta_x^2 v(x) &= \Delta_y^2 \hat{v}(y),
\end{align*}
\begin{align*}
A_0:D_x^2 v(x) = \Lambda:D_y^2\hat{v}(y) = \sum_{j=1}^n\lambda_j \hat{v}_{y_j y_j}(y).
\end{align*}
Without a loss of generality, we may assume that $A_0 = \Lambda$ in \eqref{eqn2.5}.  Hence, 
\begin{align} \label{eqn2.6}
\begin{split}
(A_0:D^2v,\Delta v) &= (\div(A_0\grad v),\Delta v)  \\
&= -(A_0\grad v,\grad\Delta v) \\
&= -(A_0\grad v,\div(D^2v)) \\
&= (\grad(A_0\grad v),D^2v) \\
&= \sum_{j=1}^n\lambda_j \| \grad v_{x_j} \|_{L^2(B)}^2 \geq \lambda\| D^2 v \|_{L^2(B)}^2.
\end{split}
\end{align}
Combining \eqref{eqn2.5} and \eqref{eqn2.6} gives us
\begin{align*}
\e\|\grad\Delta v \|_{L^2(B)}^2 + \lambda \| D^2 v \|_{L^2(B)}^2 &\leq \| \mLze v \|_{L^2(B)}\| \Delta v \|_{L^2(B)} \\
&\leq \frac{\delta}{2} \| \Delta v \|_{L^2(B)}^2 + \frac{1}{2\d}  \| \mLze v \|_{L^2(B)}^2 \\
&\leq \frac{\delta}{2} \| D^2v \|_{L^2(B)}^2 + \frac{1}{2\d}  \| \mLze v \|_{L^2(B)}^2 .
\end{align*}
Choosing $\d$ sufficiently small to move $\| D^2 v \|_{L^2(B)}^2$ to the left hand side gives the desired result.  The proof is complete.
\end{proof}

Next, we derive a similar boundary estimate.  Let $\R_+^n:=\{x=(x',x_n)\in\R^n:x_n>0\}$, $B^+=B\cap \R_+^n$, 
and $(\partial B^+)^+ = \partial(B^+)\cap \R_+^d$, where $B$ is a small ball with its center on the $x_n$-axis.  

\begin{lemma} \label{lem2.1b}
Let $v\in H^2(B)\cap H_0^1(B)$ with $\Delta v\in H_0^1(B)$ and $v,\grad v,\Delta v = 0$ near $\partial B^+$.  Then we have the following estimate:
\begin{align} \label{eqn2.1b}
\sqrt{\e}\|\grad\Delta v \|_{L^2(B^+)} + \lambda\| D^2v \|_{L^2(B^+)} \lss \|\mLze v \|_{L^2(B^+)} . 
\end{align}
\end{lemma}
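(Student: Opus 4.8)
The plan is to rerun the testing argument of Lemma~\ref{lem2.2} on the half-ball $B^+$; the only genuinely new point is the treatment of the boundary integrals on $\partial B^+$, which I split into the curved portion $(\partial B^+)^+$ and the flat portion $\Gamma_0:=\partial B^+\cap\{x_n=0\}$. On $(\partial B^+)^+$ every boundary term vanishes because $v,\grad v,\Delta v=0$ there. On $\Gamma_0$ the conditions $v=0$ and $\Delta v=0$ (the boundary conditions of \eqref{eqn2.2}) are available; and they turn out to be exactly enough, once I also record their consequence $v_{x_nx_n}=\Delta v-\sum_{i<n}v_{x_ix_i}=0$ on $\Gamma_0$, which holds because all purely tangential second derivatives of $v$ vanish on $\Gamma_0$ (as $v$ does).

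First I would test \eqref{eqn2.2} by $-\Delta v$ over $B^+$ and treat the two terms separately. For the biharmonic term, one integration by parts gives
\[
\e\bigl(\Delta^2 v,-\Delta v\bigr)_{B^+}=\e\|\grad\Delta v\|_{L^2(B^+)}^2-\e\int_{\partial B^+}\frac{\partial(\Delta v)}{\partial\nu}\,\Delta v\,\mathrm{d}S ,
\]
and the boundary integral vanishes since $\Delta v=0$ on all of $\partial B^+$. For the second-order term I would mimic the chain of integrations by parts in \eqref{eqn2.6}, now carried out on $B^+$: writing $A_0:D^2v=\div(A_0\grad v)$, the first integration by parts produces a boundary term that vanishes because it carries the factor $\Delta v$ on $\Gamma_0$ (and $\grad v$ on $(\partial B^+)^+$), while the second produces $\int_{\Gamma_0}\sum_i(A_0\grad v)_i\,v_{x_ix_n}\,\mathrm{d}S$. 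Since $v=0$ on $\Gamma_0$ its tangential first derivatives vanish there, so $(A_0\grad v)_i=(A_0)_{in}v_{x_n}$ on $\Gamma_0$ and the integrand equals $\sum_i(A_0)_{in}v_{x_n}v_{x_ix_n}=\tfrac12\sum_i(A_0)_{in}\,\partial_{x_i}\!\bigl(v_{x_n}^2\bigr)$; the terms with $i<n$ are tangential derivatives along $\Gamma_0$ and integrate to zero (the trace of $v_{x_n}^2$ on $\partial\Gamma_0$, which sits underneath $(\partial B^+)^+$, is $0$), while the $i=n$ term equals $(A_0)_{nn}v_{x_n}v_{x_nx_n}$, which vanishes because $v_{x_nx_n}=0$ on $\Gamma_0$. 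Hence every boundary term disappears and, just as in \eqref{eqn2.6}, the uniform ellipticity \eqref{eqn1.2} yields $\bigl(A_0:D^2v,\Delta v\bigr)_{B^+}=\bigl(\grad(A_0\grad v),D^2v\bigr)_{B^+}\geq\lambda\|D^2v\|_{L^2(B^+)}^2$.

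Combining the two computations gives
\[
\e\|\grad\Delta v\|_{L^2(B^+)}^2+\lambda\|D^2v\|_{L^2(B^+)}^2\leq\bigl(\mLze v,-\Delta v\bigr)_{B^+}\leq\|\mLze v\|_{L^2(B^+)}\,\|\Delta v\|_{L^2(B^+)} ,
\]
and since $\|\Delta v\|_{L^2(B^+)}\leq\|D^2v\|_{L^2(B^+)}$ (in fact equality holds, by one more integration by parts using the same boundary conditions), I may drop the first term on the left to get $\lambda\|D^2v\|_{L^2(B^+)}\leq\|\mLze v\|_{L^2(B^+)}$, then feed this bound back into the displayed inequality to control $\sqrt\e\,\|\grad\Delta v\|_{L^2(B^+)}$ as well; together these give \eqref{eqn2.1b}.

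The step I expect to be the main obstacle is justifying the boundary manipulations on $\Gamma_0$, since they use traces of second-order derivatives of $v$ that are not furnished by $v\in H^2$ alone. I would handle this with a short regularity bootstrap: because $\mLze v\in L^2(B^+)$ and $A_0:D^2v\in L^2(B^+)$, we have $\Delta^2v\in L^2(B^+)$, so interior and up-to-$\Gamma_0$ elliptic regularity for $-\Delta$ (using $\Delta v=0$ on $\Gamma_0$) gives $\Delta v\in H^2$ near $\Gamma_0$, hence $v\in H^4$ near $\Gamma_0$, which legitimizes the trace computations above; alternatively, one first proves \eqref{eqn2.1b} for smooth $v$ satisfying the stated conditions and then passes to the limit by density. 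A route that avoids $\Gamma_0$-traces altogether would instead reduce $A_0$ to a diagonal matrix by a linear change of variables preserving $\{x_n>0\}$, extend $v$ across $\Gamma_0$ by odd reflection, and invoke Lemma~\ref{lem2.2} directly.
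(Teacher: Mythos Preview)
Your argument is correct, but the route is genuinely different from the paper's. The paper does not integrate by parts on $B^+$ at all: it extends $v$ from $B^+$ to $B$ by the odd reflection $v(x',x_n)=-v(x',-x_n)$, observes that the extended function has compact support in $B$ (and inherits the needed regularity because $v=\Delta v=0$ on $\Gamma_0$ forces $v_{x_nx_n}=0$ there), applies Lemma~\ref{lem2.2} verbatim on the full ball, and then restricts back to $B^+$ using that odd reflection is a bounded operator. In other words, the paper's proof is exactly the alternative you sketch in your final sentence (the preliminary diagonalization you mention is unnecessary, since Lemma~\ref{lem2.2} already handles arbitrary constant $A_0$). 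Your direct approach on $B^+$ buys an explicit accounting of why each boundary term on $\Gamma_0$ vanishes---a computation the paper hides inside the reflection---at the cost of the regularity bootstrap or density step you flag, which the reflection route sidesteps entirely. One small remark: the pointwise inequality $|\Delta v|\le |D^2v|$ you invoke is off by a dimensional constant; the integrated identity $\|\Delta v\|_{L^2(B^+)}=\|D^2v\|_{L^2(B^+)}$ does hold here (it is the $A_0=I$ case of the very computation you just did), but either way only a $\lesssim$ is needed.
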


\begin{proof}
We will extend $v$ from $B^+$ to $B$ by an odd reflection, that is $v(x',x_n) = -v(x',-x_n)$ for all $x\in B\setminus B^+$. Since $\supp(v)\subset B$ 
after the reflection, we may test the PDE by $\Delta v$ and use a similar argument as to the one in Lemma \ref{lem2.2} to obtain 
\begin{align} \label{eqn2.2b}
\sqrt{\e}\|\grad\Delta v \|_{L^2(B)} + \sqrt{\lambda}\| D^2 v \|_{L^2(B)} \lss \|\mLze v \|_{L^2(B)} . 
\end{align}
Since the odd reflection is a bounded linear operator, we have
\begin{align*}
\sqrt{\e}\|\grad\Delta v \|_{L^2(B^+)} + \sqrt{\lambda}\| D^2 v \|_{L^2(B^+)} &\leq \sqrt{\e}\|\grad\Delta v \|_{L^2(B)} + \sqrt{\lambda}\| D^2 v \|_{L^2(B)}  \\
&\lss\|\mLze v \|_{L^2(B)} \\
&\lss\|\mLze v \|_{L^2(B^+)} . 
\end{align*} 
The proof is complete.
\end{proof}

\subsubsection{Estimates for Continuous Coefficient Operators} \label{section3.2.2}

Suppose that  $A\in \left[C(\overline{\W})\right]^{n\times n}$ is uniformly positive definite.  In this section, we seek uniform $H^1$ and $H^2$ stability 
estimates for $\mLe$. Following the freezing coefficients technique, we first need to derive local $H^1$ and $H^2$ stability estimates, which in 
turn require the following lemma controlling the bound of the $H^{-1}$ norm of the Hessian.

\begin{lemma} \label{hm1lemma}
Let $B$ be an open ball and $v\in H^2(B)$ and $|\cdot|_1$ denote the vector $1$-norm.  There holds 
\begin{align}\label{h1control}
\| D^2 v\|_{H^{-1}(B)} &\leq n^{\frac12} \| \grad v \|_{L^2(B)}, \\
\| |D^2 v|_1\|_{H^{-1}(B)} &\leq n\| \grad v \|_{L^2(B)}, \label{h1control_2}
\end{align}
where $n$ is the dimension of the domain $\W$ and 
\begin{align*} %\label{math1def}
\| D^2 v\|_{H^{-1}(B)} &= \left(\sum_{i,j=1}^n \| v_{x_ix_j} \|_{H^{-1}(B)}^2\right)^{\frac12},\\
\| |D^2 v|_1 \|_{H^{-1}(B)} &=  \sum_{i,j=1}^n \bigl\| |v_{x_ix_j}| \bigr\|_{H^{-1}(B)} .
\end{align*}
\end{lemma}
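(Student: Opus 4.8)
The plan is to bound each second partial derivative $v_{x_ix_j}$ in the $H^{-1}(B)$ norm by a single first derivative in $L^2(B)$, and then sum up. Fix indices $i,j$ and let $\phi\in H_0^1(B)$. Since $v\in H^2(B)$, we may integrate by parts once moving one derivative off of $v$ and onto $\phi$: $(v_{x_ix_j},\phi)_B = -(v_{x_j},\phi_{x_i})_B$, where the boundary term vanishes because $\phi\in H_0^1(B)$. By Cauchy--Schwarz this gives $|(v_{x_ix_j},\phi)_B|\le \|v_{x_j}\|_{L^2(B)}\|\phi_{x_i}\|_{L^2(B)}\le \|v_{x_j}\|_{L^2(B)}\|\phi\|_{H^1(B)}$, and taking the supremum over $\phi\in H_0^1(B)$ with $\|\phi\|_{H^1(B)}\le 1$ yields $\|v_{x_ix_j}\|_{H^{-1}(B)}\le \|v_{x_j}\|_{L^2(B)}$.

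For \eqref{h1control}, I would square this pointwise bound and sum over $i,j=1,\dots,n$:
\[
\sum_{i,j=1}^n \|v_{x_ix_j}\|_{H^{-1}(B)}^2 \le \sum_{i,j=1}^n \|v_{x_j}\|_{L^2(B)}^2 = n\sum_{j=1}^n \|v_{x_j}\|_{L^2(B)}^2 = n\,\|\grad v\|_{L^2(B)}^2,
\]
since the summand does not depend on $i$. Taking square roots gives $\|D^2v\|_{H^{-1}(B)}\le n^{1/2}\|\grad v\|_{L^2(B)}$. For \eqref{h1control_2}, I first note the elementary identity $\bigl\||v_{x_ix_j}|\bigr\|_{H^{-1}(B)} = \|v_{x_ix_j}\|_{H^{-1}(B)}$ is not quite what is needed; rather, directly from the same integration by parts applied to the absolute-value notation in the statement one has $\bigl\||v_{x_ix_j}|\bigr\|_{H^{-1}(B)}\le \|v_{x_j}\|_{L^2(B)}$, so summing (without squaring) gives
\[
\sum_{i,j=1}^n \bigl\||v_{x_ix_j}|\bigr\|_{H^{-1}(B)} \le \sum_{i,j=1}^n \|v_{x_j}\|_{L^2(B)} = n\sum_{j=1}^n \|v_{x_j}\|_{L^2(B)} \le n\,n^{1/2}\Bigl(\sum_{j=1}^n\|v_{x_j}\|_{L^2(B)}^2\Bigr)^{1/2},
\]
which only gives a constant $n^{3/2}$. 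To recover the sharper $n$ one should instead bound $\sum_j \|v_{x_j}\|_{L^2(B)}$ by $n^{1/2}\|\grad v\|_{L^2(B)}$ via Cauchy--Schwarz on the index $j$, then observe the sum over $i$ contributes a factor $n$, so the total is $n\cdot n^{1/2}\cdot n^{-1/2}\cdot$... — here I would re-examine the constant bookkeeping carefully; the intended route is presumably that $|D^2v|_1 = \sum_{i,j}|v_{x_ix_j}|$ pairs against a single test function, and one uses $\sum_{i,j}|v_{x_ix_j}\phi|$ integrated, bounding $\sum_i |v_{x_j}|$-type sums by $n^{1/2}$ and the $j$-sum of $L^2$ norms by another $n^{1/2}$.

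The only genuine obstacle is this constant bookkeeping in \eqref{h1control_2}: getting exactly $n$ rather than $n^{3/2}$ requires applying Cauchy--Schwarz on the correct index and recognizing which sums are ``free.'' Everything else is a one-line integration by parts plus the definition of the negative norm, and the $H^2(B)$ regularity of $v$ (not $H^2\cap H_0^1$) is exactly what licenses integrating by parts against $\phi\in H_0^1(B)$ with no boundary contribution. I would therefore spend the bulk of the write-up making the index-summation in the second estimate explicit and leave the first estimate as the short computation above.
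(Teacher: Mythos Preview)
Your treatment of \eqref{h1control} is essentially the paper's: one integration by parts gives $\|v_{x_ix_j}\|_{H^{-1}(B)}\le\|v_{x_j}\|_{L^2(B)}$ (the paper shifts the $j$-derivative onto the test function rather than the $i$-derivative, but this is immaterial), and summing the squares over $i,j$ produces the factor $n^{1/2}$.

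For \eqref{h1control_2} there is a genuine gap. You assert that ``the same integration by parts applied to the absolute-value notation'' yields $\bigl\||v_{x_ix_j}|\bigr\|_{H^{-1}(B)}\le\|v_{x_j}\|_{L^2(B)}$, but $|v_{x_ix_j}|$ is not a partial derivative of anything and cannot be integrated by parts directly against $\phi\in H_0^1(B)$. Nor does one have $\bigl\||g|\bigr\|_{H^{-1}}=\|g\|_{H^{-1}}$ in general; only the inequality $\|g\|_{H^{-1}}\le\bigl\||g|\bigr\|_{H^{-1}}$ holds (replace the test function $w$ by $|w|$ and use $\|\grad|w|\|_{L^2}=\|\grad w\|_{L^2}$), and that is the wrong direction here. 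The paper's device is to split $|v_{x_ix_j}|=v_{x_ix_j}^{+}+v_{x_ix_j}^{-}$ into positive and negative parts and then integrate by parts on each piece separately over the set $\{v_{x_ix_j}>0\}$ (respectively $\{v_{x_ix_j}<0\}$), appealing to the Sobolev calculus in \cite[Chapter~7]{Sp:DT}. This decomposition is the missing idea in your proposal.

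Your worry about the constant is a secondary matter. Once the bound $\bigl\||v_{x_ix_j}|\bigr\|_{H^{-1}(B)}\lss\|v_{x_i}\|_{L^2(B)}$ is in hand, summing over $i,j$ and applying Cauchy--Schwarz on the remaining single index runs exactly as you describe, and the paper's argument encounters the same arithmetic. The real obstacle is not the index bookkeeping but the handling of the absolute value, and that is where you should focus the write-up.
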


\begin{proof}
Let $i,j=1,\ldots,n$ and $w\in H_0^1(B)$ with $w\not\equiv 0$.  Integration by parts yields
\begin{align*}
( v_{x_i x_j} , w ) = -(v_{x_i},w_{x_j}) \leq \|v_{x_i}\|_{L^2(B)}\|\grad w\|_{L^2(B)}.  
\end{align*}
Thus, by the definition of $\|v\|_{H^{-1}(B)}$, we have
\begin{align*}
\|v_{x_i x_j}\|_{H^{-1}(B)} = \sup_{\substack{w\in H_0^1(B) \\ w\not\equiv 0}}\frac{(v_{x_i x_j},w)_B}{\|\grad w\|_{L^2(B)}} \leq \|v_{x_i}\|_{L^2(B)}.
\end{align*}
Summing over $i$ and $j$ gives us \eqref{h1control}.  

The proof of \eqref{h1control_2} follows the same lines with an additional help of some facts from \cite[Chapter 7]{Sp:DT}.  Let 
$|v_{x_i,x_j}|= v^+_{x_ix_j} + v^-_{x_ix_j}$, 
 where $v^\pm$ denotes perspectively the positive and 
negative parts of $v$.  Then
\begin{align*}
 (v_{x_ix_j}^{+},w) &= \int_{\{v_{x_i,x_j}>0\}}v_{x_ix_j}(x)w(x) \dx[x] = -\int_{\{v_{x_i,x_j}>0\}} v_{x_i}(x)w_{x_j}(x)\dx[x] \\ &\leq \|v_{x_i}\|_{L^2(\W)}\|\grad w\|_{L^2(\W)}.
\end{align*} 
A similar result holds for $v_{x_ix_j}^-$.  The proof is complete.
\end{proof}

We are now ready to prove the local $H^1$ and $H^2$ stability of $\mLe$.

\begin{lemma} \label{lem3.1}
Let $x_0\in\W$ and $B_R(x_0)\subset\W$ be the ball of radius $R$ centered at $x_0$. There exists $R_{\d}>0$, independent 
of $\e$, such that, for all $v\in H$ with $\supp(v)\subset B:= B_{R_{\d}}(x_0)$, the following estimates hold: 
\begin{align} \label{eqn3.1}
\sqrt{\e}\|\grad\Delta v \|_{L^2(B)} + \sqrt{\lambda}\| D^2 v \|_{L^2(B)} &\lss \|\mLe v \|_{L^2(B)}, \\
\sqrt{\e}\|\Delta v \|_{L^2(B)} + \sqrt{\lambda}\| \grad v \|_{L^2(B)} &\lss \|\mLe v \|_{H^{-1}(B)}. \label{eqn3.1a}
\end{align}
\end{lemma}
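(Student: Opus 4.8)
The plan is to use the freezing-coefficient technique, transferring the constant-coefficient estimates from Section \ref{section3.1.2} (Lemmas \ref{lem2.2} and \ref{lem2.1}) to the continuous-coefficient operator $\mLe$ on a sufficiently small ball. First I would write $\mLe v = \mLze v + (A_0 - A):D^2 v$, where $A_0 := A(x_0)$ is the frozen coefficient matrix; note $A_0$ is symmetric positive definite satisfying \eqref{eqn1.2}. By uniform continuity of $A$ on $\overline\W$, for any $\d>0$ there exists $R_\d>0$, independent of $\e$, so that $\|A(x)-A_0\| \le \d$ for all $x\in B = B_{R_\d}(x_0)$.

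For \eqref{eqn3.1}, I would apply Lemma \ref{lem2.2} to $v$ (whose support lies in $B$) to get
\begin{align*}
\sqrt{\e}\|\grad\Delta v\|_{L^2(B)} + \sqrt{\lambda}\|D^2 v\|_{L^2(B)} \lss \|\mLze v\|_{L^2(B)} \le \|\mLe v\|_{L^2(B)} + \|(A-A_0):D^2 v\|_{L^2(B)},
\end{align*}
and then bound $\|(A-A_0):D^2 v\|_{L^2(B)} \le C_n \d \|D^2 v\|_{L^2(B)}$ (using $|A(x)-A_0|\le\d$ pointwise and Cauchy-Schwarz over the $n^2$ entries). Choosing $\d$ small enough — fixed once and for all in terms of $\lambda$ and $n$, hence determining $R_\d$ — the term $C_n\d\|D^2 v\|_{L^2(B)}$ is absorbed into the left-hand side, yielding \eqref{eqn3.1}. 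Crucially, this absorption constant does not depend on $\e$ because Lemma \ref{lem2.2}'s constant is $\e$-independent.

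For \eqref{eqn3.1a}, the argument is analogous but starts from the $H^{-1}$-based estimate: I would apply Lemma \ref{lem2.1} (in its localized/scaled form on $B$) to obtain
\begin{align*}
\sqrt{\e}\|\Delta v\|_{L^2(B)} + \sqrt{\lambda}\|\grad v\|_{L^2(B)} \lss \|\mLze v\|_{H^{-1}(B)} \le \|\mLe v\|_{H^{-1}(B)} + \|(A-A_0):D^2 v\|_{H^{-1}(B)}.
\end{align*}
Here is where Lemma \ref{hm1lemma} enters: rather than controlling $\|(A-A_0):D^2 v\|_{H^{-1}(B)}$ by $\|D^2 v\|_{L^2}$, I would estimate it by $\d$ times $\||D^2 v|_1\|_{H^{-1}(B)}$ (pulling the $L^\infty$-small factor $A-A_0$ through the duality pairing), and then invoke \eqref{h1control_2} to get $\||D^2 v|_1\|_{H^{-1}(B)} \le n\|\grad v\|_{L^2(B)}$. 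Thus the perturbation is bounded by $C_n \d \|\grad v\|_{L^2(B)}$, which is absorbed into the left side for $\d$ small. One technical point to verify carefully is that Lemma \ref{lem2.1}, stated globally on $\W$, applies on the ball $B$ with an $\e$-independent constant — this follows from the same test-function argument (testing $\mLze v$ by $v$), since $v\in H_0^1(B)$.

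**Main obstacle.** The delicate point is not any single estimate but the \emph{uniformity in $\e$}: one must be certain that the radius $R_\d$ controlling the oscillation of $A$, and the smallness threshold for $\d$ needed to absorb the perturbation, are both chosen independently of $\e$. This works precisely because Lemmas \ref{lem2.1} and \ref{lem2.2} were proven with $\e$-independent constants. A secondary subtlety is handling the $H^{-1}$ perturbation term in \eqref{eqn3.1a} correctly — naively one would bound it by $\|D^2 v\|_{L^2}$, which is the wrong (too strong) norm to absorb; the resolution, using \eqref{h1control_2} to land in $\|\grad v\|_{L^2(B)}$, is exactly why Lemma \ref{hm1lemma} was established beforehand, and getting this pairing manipulation right (moving the matrix factor $A-A_0$ outside the $H^{-1}$ pairing while keeping the $L^\infty$-small constant) is the step that requires the most care.
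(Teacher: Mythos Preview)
Your proposal is correct and follows essentially the same route as the paper's proof: freeze $A_0=A(x_0)$, invoke Lemma~\ref{lem2.2} (resp.\ Lemma~\ref{lem2.1}) for the constant-coefficient bound, control the perturbation $(A-A_0):D^2 v$ in $L^2$ (resp.\ in $H^{-1}$ via Lemma~\ref{hm1lemma}), and absorb using an $\e$-independent choice of $\d$. You also correctly flag the two points the paper leaves implicit---that Lemma~\ref{lem2.1}'s testing argument localizes to $B$ because $\supp(v)\subset B$, and that the absorption is uniform in $\e$.
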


\begin{proof}
Let $\d>0$, and define $A_0:=A(x_0)$.  Since $A$ is continuous, there exists $R_{\d}>0$ such that 
\[
	\| A - A_0 \|_{L^{\infty}(B_{R_{\d}}(x_0))} \leq \d.
\]
By Lemma \ref{lem2.2}, we have
\begin{align*}
\begin{split}
\sqrt{\e}\|\grad\Delta v \|_{L^2(B)} + \sqrt{\lambda}\| D^2 v \|_{L^2(B)} &\lss \|\mLze v \|_{L^2(B)} \\
&\lss \| \mLe v \|_{L^2(B)} + \|(\mLze - \mLe) v \|_{L^2(B)} \\
&= \| \mLe v \|_{L^2(B)} + \| (A - A_0):D^2 v \|_{L^2(B)}  \\
&\lss \| \mLe v \|_{L^2(B)} + \| A - A_0\|_{L^{\infty}(B)} \| D^2 v \|_{L^2(B)} \\
&\leq \| \mLe v \|_{L^2(B)} +  \d \| D^2 v \|_{L^2(B)}, 
\end{split}
\end{align*}
and \eqref{eqn3.1} follows for $\d$ sufficiently small (independent of $\e$ and $B$).  

To show \eqref{eqn3.1a}, we follow a similar technique.  Using Lemma \ref{lem2.1} and Lemma \ref{hm1lemma}, we have
\begin{align*}
\begin{split}
\sqrt{\e}\|\Delta v \|_{L^2(B)} + \sqrt{\lambda}\| \grad v \|_{L^2(B)} &\lss \|\mLze v \|_{H^{-1}(B)} \\
&\lss \| \mLe v \|_{H^{-1}(B)} + \|(\mLze - \mLe) v \|_{H^{-1}(B)} \\
&= \| \mLe v \|_{H^{-1}(B)} + \| (A - A_0):D^2 v \|_{H^{-1}(B)}  \\
&\lss \| \mLe v \|_{H^{-1}(B)} + \| A - A_0\|_{L^{\infty}(B)} \| |D^2 v|_1 \|_{H^{-1}(B)} \\
&\lss \| \mLe v \|_{H^{-1}(B)} +  \d n^2 \| \grad v \|_{L^2(B)}, 
\end{split}
\end{align*}
and \eqref{eqn3.1a} follows for $\d$ sufficiently small (independent of $\e$ and $B$).  
The proof is complete.
\end{proof}

Finally, using a partition of unity argument, we can get  an interior G\r{a}rding inequality.  

\begin{lemma} \label{lem3.2}
Let $v\in H$. For any $\W'\subset\subset \W$ the following estimates hold: 
\begin{align} \label{eqn3.2}
\sqrt{\e}\|\grad\Delta v \|_{L^2(\W')} &+ \sqrt{\lambda}\| D^2 v \|_{L^2(\W')} \lss \|\mLe v \|_{L^2(\W)} + \| v \|_{L^2(\W)}  \\
&\hskip 0.8in + \e(\| \grad v \|_{L^2(\W)} +  \| \Delta v \|_{L^2(\W)} + \| \grad\Delta v \|_{L^2(\W)}),\nonumber \\
\sqrt{\e}\|\Delta v \|_{L^2(\W')} &+ \sqrt{\lambda}\| \grad v \|_{L^2(\W')}\lss \|\mLe v \|_{H^{-1}(\W)} + \| v \|_{L^2(\W)} \nonumber\\
&\qquad +   \e(\| \grad v \|_{L^2(\W)} +  \| \Delta v \|_{L^2(\W)}) .  \label{eqn3.2a}
\end{align}
\end{lemma}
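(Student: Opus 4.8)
The plan is to derive \eqref{eqn3.2} and \eqref{eqn3.2a} from the local estimates \eqref{eqn3.1} and \eqref{eqn3.1a} of Lemma \ref{lem3.1} via a finite partition of unity subordinate to a cover of $\overline{\W'}$ by balls of the radius $R_\d$ supplied by Lemma \ref{lem3.1}. Concretely, since $\W'\subset\subset\W$, there is a $\rho>0$ with $\dist(\W',\partial\W)>\rho$; cover $\overline{\W'}$ by finitely many balls $B_i:=B_{R_i}(x_i)$, $i=1,\dots,N$, with $B_{2R_i}(x_i)\subset\W$ and $R_i\le R_\d$, and pick $\phi_i\in C_0^\infty(B_i)$ with $0\le\phi_i\le1$ and $\sum_i\phi_i\equiv1$ on a neighborhood of $\overline{\W'}$. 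For fixed $i$ I would apply Lemma \ref{lem3.1} to $w:=\phi_i v\in H$ (note $\supp w\subset B_i$, and $w\in H$ because multiplication by a smooth cutoff preserves membership in $H^2\cap H_0^1$ and preserves $\Delta w\in H_0^1$; this is where being interior, away from $\partial\W$, is used so that no boundary terms interfere).

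The second step is the commutator computation: one writes $\mLe(\phi_i v)=\phi_i\,\mLe v+[\mLe,\phi_i]v$, where the commutator $[\mLe,\phi_i]v=\e\bigl(\Delta^2(\phi_i v)-\phi_i\Delta^2 v\bigr)-\bigl(A:D^2(\phi_i v)-\phi_i A:D^2v\bigr)$. Expanding by the Leibniz rule, $A:D^2(\phi_i v)-\phi_i A:D^2 v = 2A:(\grad\phi_i\otimes\grad v)+(A:D^2\phi_i)v$, which is controlled in $L^2(B_i)$ by $\| \grad v\|_{L^2(B_i)}+\| v\|_{L^2(B_i)}$ with constants depending on $\|A\|_{L^\infty}$ and the fixed cutoff; similarly the biharmonic commutator $\Delta^2(\phi_i v)-\phi_i\Delta^2 v$ expands into terms each carrying at least one derivative of $\phi_i$, hence is bounded in $L^2(B_i)$ by $\|\grad v\|_{L^2}+\|\Delta v\|_{L^2}+\|\grad\Delta v\|_{L^2}$ — and it comes multiplied by $\e$, which is exactly why the $\e(\|\grad v\|+\|\Delta v\|+\|\grad\Delta v\|)$ block appears on the right of \eqref{eqn3.2}. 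Plugging into \eqref{eqn3.1} gives, for each $i$,
\begin{align*}
\sqrt{\e}\|\grad\Delta(\phi_i v)\|_{L^2(B_i)}+\sqrt{\lambda}\|D^2(\phi_i v)\|_{L^2(B_i)}
&\lss \|\mLe v\|_{L^2(\W)}+\|v\|_{L^2(\W)}\\
&\quad+\e\bigl(\|\grad v\|_{L^2(\W)}+\|\Delta v\|_{L^2(\W)}+\|\grad\Delta v\|_{L^2(\W)}\bigr).
\end{align*}
Then I would recover the left-hand side for $v$ itself: on $\W'$, $D^2v=\sum_i D^2(\phi_i v)-\bigl(\text{lower-order terms in }v,\grad v\bigr)$ since $\sum\phi_i=1$ there, so $\|D^2 v\|_{L^2(\W')}\le\sum_i\|D^2(\phi_i v)\|_{L^2(B_i)}+C(\|\grad v\|_{L^2(\W)}+\|v\|_{L^2(\W)})$, and likewise $\|\grad\Delta v\|_{L^2(\W')}\le\sum_i\|\grad\Delta(\phi_i v)\|_{L^2(B_i)}+C(\|v\|_{L^2}+\|\grad v\|_{L^2}+\|\Delta v\|_{L^2})$; the extra lower-order terms are absorbed into the right side, the $\sqrt{\e}$-weighted ones into the $\e(\cdots)$ block. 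Summing the $N$ inequalities (with $N$ and the cutoffs fixed, hence the implied constants fixed) yields \eqref{eqn3.2}. The bound \eqref{eqn3.2a} is proved identically, now invoking \eqref{eqn3.1a}: the only change is that the $\mLe$-term is measured in $H^{-1}$, the commutator terms $[\mLe,\phi_i]v$ must be estimated in $H^{-1}(B_i)$ rather than $L^2$ — which is easier, one integrates by parts once more against the $H_0^1(B_i)$ test function — and one uses Lemma \ref{hm1lemma} to pass from Hessians to gradients as in the proof of \eqref{eqn3.1a}; the biharmonic commutator again supplies only the $\e(\|\grad v\|+\|\Delta v\|)$ terms, with no $\|\grad\Delta v\|$ needed because in $H^{-1}$ one gains a derivative.

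The main obstacle I anticipate is bookkeeping rather than any deep difficulty: one must be careful that the constant $R_\d$ from Lemma \ref{lem3.1} is uniform (it depends only on the modulus of continuity of $A$ and on $\d$, not on $x_0$ or $\e$), so that a single cover of the compact set $\overline{\W'}$ works; and one must track which commutator pieces carry a factor of $\e$ so that the final right-hand side has exactly the stated form. A secondary point to verify is that $\phi_i v\in H$ — i.e. that $\Delta(\phi_i v)\in H_0^1(\W)$ — which holds because $\phi_i v$ and $\grad(\phi_i v)$ vanish near $\partial\W$ (the support is an interior ball), so the Dirichlet-type conditions built into $H$ are trivially satisfied; no odd-reflection boundary argument is needed here since the estimate is purely interior.
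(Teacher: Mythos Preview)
Your partition-of-unity outline is essentially the right architecture, but there is a real gap, and it is precisely the step you dismiss as ``bookkeeping.'' When you expand the second-order part of the commutator,
\[
A:D^2(\phi_i v)-\phi_i\,A:D^2 v \;=\; 2A:(\grad\phi_i\otimes\grad v)+(A:D^2\phi_i)\,v,
\]
you correctly note that its $L^2(B_i)$ norm is controlled by $\|\grad v\|_{L^2(B_i)}+\|v\|_{L^2(B_i)}$. But this contributes a term $C\|\grad v\|_{L^2(\W)}$ \emph{without} an $\e$ prefactor to the right-hand side, and the stated right-hand side of \eqref{eqn3.2} contains only $\|v\|_{L^2(\W)}$ plus the $\e(\cdots)$ block. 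In your displayed inequality you simply drop this $\|\grad v\|$ contribution, and in the paragraph after you say the lower-order terms ``are absorbed into the right side''---they cannot be, since the right side has no such term to absorb them into. The same issue recurs in your sketch for \eqref{eqn3.2a}: the $H^{-1}$ norm of $A:(\grad\phi_i\otimes\grad v)$ does not reduce to $\|v\|_{L^2}$ because $A$ is merely continuous, so you cannot integrate by parts through it.

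The paper's proof confronts exactly this point. It works with a single cutoff $\eta$ on concentric balls $B_{\sigma R}\subset B_{\sigma'R}\subset B_R$ and keeps explicit track of the $R$- and $\sigma$-dependence of every commutator term (cf.\ \eqref{eqn3.6}): the offending gradient term appears with coefficient $((1-\sigma)R)^{-1}$. It then invokes the weighted-seminorm interpolation technique of Gilbarg--Trudinger \cite[p.~236]{Sp:DT} (the device behind their Theorem~9.11) to trade $\|\grad v\|$ for a small multiple of $\|D^2 v\|$ plus a large multiple of $\|v\|$, with the ``small'' piece absorbed into the left-hand side via the $\sigma$-optimization. Only after that step does the covering argument close with the right-hand side as stated. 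Your fixed partition of unity, with constants frozen once and for all, gives no parameter to play against and therefore cannot eliminate the stray $\|\grad v\|$ term.
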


\begin{proof}
For a ball $B_R$ with radius $R$, let $\sigma = 1/2$ and consider the cutoff function $\eta\in C_0^{\infty}(B_R)$ with $0\leq\eta\leq 1$, $\eta \equiv 1$ in $B_{\sigma R}$, and $\eta \equiv 0$ on $B_R\setminus B_{\sigma'R}$, where $\sigma' = 3/4$.  Moreover, $\| D^k \eta \|_{L^\infty(B_R)} \lss (1-\sigma)^{-k}R^{-k}$ for $k=0,1,2,3,4$.  Applying \eqref{eqn3.1} to the function $ \eta v$ on the ball $B_{\sigma R}$ gives us
\begin{align} \label{eqn3.3}
\begin{split}
\sqrt{\e}\|\grad\Delta v \|_{L^2(B_{\sigma R})} &+ \sqrt{\lambda}\| D^2 v \|_{L^2(B_{\sigma R})} \\
&= \sqrt{\e}\|\grad\Delta(\eta v) \|_{L^2(B_{\sigma R})} + \sqrt{\lambda}\| D^2 (\eta v) \|_{L^2(B_{\sigma R})} \\
&\leq \sqrt{\e}\|\grad\Delta(\eta v) \|_{L^2(B_{\sigma'R})} + \sqrt{\lambda}\| D^2 (\eta v) \|_{L^2(B_{\sigma'R})} \\
&\lss  \|\mLe (\eta v) \|_{L^2(B_{\sigma'R})} = \| \e\Delta^2(\eta v) - A:D^2(\eta v) \|_{L^2(B_{\sigma'R})} . 
\end{split}
\end{align}
Expanding $\Delta^2(\eta v)$ and $A:D^2(\eta v)$ gives us
\begin{align} \label{eqn3.4}
\Delta^2(\eta v) &= \eta\Delta^2 v + 4 \grad\Delta v\cdot\grad\eta + 6\Delta v\Delta \eta  + 4\grad v\cdot\grad\Delta\eta + v\Delta^2\eta, \\ \label{eqn3.5}
A:D^2(\eta v) &=\eta A:D^2v + 2A\grad v\cdot\grad\eta + v A:D^2\eta.
\end{align}
Using \eqref{eqn3.4} and \eqref{eqn3.5} gives us, with the $L^2$ norm taken over $B_{\sigma'R}$,
\begin{align} \label{eqn3.6}
\| \e\Delta^2(\eta v) - A:D^2(\eta v) \|_{L^2} &\lss \| \mLe v \|_{L^2} + \frac{1}{(1-\sigma)R}\|\grad v \|_{L^2} + \frac{1}{(1-\sigma)^2R^2}\|v  \|_{L^2} \\
&\quad+ \frac{\e}{(1-\sigma)^4R^4}\| v \|_{L^2}  + \frac{\e}{(1-\sigma)^3R^3}\| \grad v \|_{L^2}
\nonumber \\
&\quad+ \frac{\e}{(1-\sigma)^2R^2}\| \Delta v \|_{L^2} + \frac{\e}{(1-\sigma)R}\| \grad\Delta v \|_{L^2} . 
\nonumber
\end{align}
The treatment of the first three terms on the right follows from the interpolation technique in \cite[p.236]{Sp:DT}.  
Keeping the rest of the terms on the right and using a covering argument we arrive at \eqref{eqn3.2}. Since $\overline{\W'}$ 
is compact it will only take a finite number of balls to cover $\W'$. Thus, the estimate does not depend on $R$. 

To show \eqref{eqn3.2a}, using the same $\eta$ as prescribed above we first recover a similar estimate to \eqref{eqn3.3} for \eqref{eqn3.2a}: 
\begin{align} \label{eqn3.16}
\begin{split}
\sqrt{\e}\|\Delta v \|_{L^2(B_{\sigma R})} +& \sqrt{\lambda}\| \grad v \|_{L^2(B_{\sigma R})} \\
&\lss  \|\mLe (\eta v) \|_{H^{-1}(B_{\sigma'R})} \\
&= \| \e\Delta^2(\eta v) - A:D^2(\eta v) \|_{H^{-1}(B_{\sigma'R})} \\
&= \sup_{\substack{w\in H_0^1(B_{\sigma'R}) \\ w\not\equiv 0}}\frac{(\e\Delta^2(\eta v) - A:D^2(\eta v),w)}{\|\grad w \|_{L^2(B_{\sigma' R})}}.
\end{split}
\end{align}
Let $w\in H_0^1(B_{\sigma'R}))$. By integration by parts we have
\begin{align} \label{eqn3.17}
\begin{split}
(\e\Delta^2(\eta v) - A:D^2(\eta v),w) &= -\e(\grad\Delta(\eta v),\grad w) - (A:D^2(\eta v),w) \\
&:=\e I_1 + I_2 . 
\end{split}
\end{align}
We first focus on $I_2$.  Expanding $\grad\Delta(\eta v)$ similar to \eqref{eqn3.4} and integrating by parts gives us
\begin{align*} \stepcounter{equation}\tag{\theequation}\label{eqn3.18}
I_1 &= -(\grad\Delta(\eta v),\grad w) \\
&= -(\eta\grad\Delta v,\grad w) - (3\Delta v\grad\eta + 3\Delta\eta\grad v + u^\e\grad\Delta\eta,\grad w) \\
&= (\div(\eta\grad\Delta v),w) - (3\Delta v\grad\eta + 3\Delta\eta\grad v + u^\e\grad\Delta\eta,\grad w)\\
&= (\Delta^2 v,\eta w) + (\grad\Delta v,\grad\eta w)  - (3\Delta v\grad\eta + 3\Delta\eta\grad v + u^\e\grad\Delta\eta,\grad w)\\
&= (\Delta^2 v,\eta w) - (\Delta v,\div(\grad\eta w))  - (3\Delta v\grad\eta + 3\Delta\eta\grad v + u^\e\grad\Delta\eta,\grad w)\\
&= (\Delta^2 v,\eta w) - (\Delta v,w\Delta\eta + \grad\eta\cdot\grad w)  - (3\Delta v\grad\eta + 3\Delta\eta\grad v + v\grad\Delta\eta,\grad w)\\
&\lss (\Delta^2 v,\eta w) + \left( \frac{1}{(1-\sigma)^2R^2}\| \Delta v\|_{L^2}  + \frac{1}{(1-\sigma)^2R^2}\| \grad v\|_{L^2} \right. \\
&\quad\quad \left .+ \frac{1}{(1-\sigma)^3R^3}\| v\|_{L^2}\right)\|\grad w \|_{L^2} . 
\end{align*}
Using \eqref{eqn3.5} on $I_2$, we get
\begin{align*} \stepcounter{equation}\tag{\theequation}\label{eqn3.19}
I_2 &= -(A:D^2(\eta v),w) \\
&= -(\eta A:D^2v + 2A\grad v\cdot\grad\eta + v A:D^2\eta,w) \\
&\lss -(A:D^2v,\eta w) + \left( \frac{1}{(1-\sigma)R}\| \grad v\|_{L^2} + \frac{1}{(1-\sigma)^2R^2}\| v\|_{L^2}\right)\|\grad w \|_{L^2} . 
\end{align*}
Combining \eqref{eqn3.18} and \eqref{eqn3.19} with \eqref{eqn3.17} gives us
\begin{align}
\begin{split}
\sqrt{\e}\|\Delta v \|_{L^2(B_{\sigma R})} +& \sqrt{\lambda}\| \grad v \|_{L^2(B_{\sigma R})} \\
&\lss \| \mLe v \|_{H^{-1}(\W) } + \frac{1}{(1-\sigma)R}\|\grad v \|_{L^2} + \frac{1}{(1-\sigma)^2R^2}\|v  \|_{L^2} \\
&\quad+ \frac{\e}{(1-\sigma)^3R^3}\| v \|_{L^2}  + \frac{\e}{(1-\sigma)^2R^2}\| \grad v \|_{L^2} \\
&\quad+ \frac{\e}{(1-\sigma)^2R^2}\| \Delta v \|_{L^2} . 
\end{split}
\end{align}
Following a similar treatment as \eqref{eqn3.6}, we arrive at \eqref{eqn3.2a}.  
The proof is complete.  
\end{proof}

We now desire a global estimate instead of an interior estimate.

\begin{lemma} \label{lem3.3}
Let $\partial\W\in C^{2,1}$ and $v\in H$.  The following estimates hold: 
\begin{align} \label{eqn3.7}
\sqrt{\e}\|\grad\Delta v \|_{L^2(\W)} + \sqrt{\lambda}\| D^2 v \|_{L^2(\W)} &\lss \|\mLe v \|_{L^2(\W)} + \| v \|_{L^2(\W)} + \e\big(\| \grad v \|_{L^2(\W)} \\
&\quad+  \| \Delta v \|_{L^2(\W)} + \| \grad\Delta v \|_{L^2(\W)}\big), \nonumber \\ \label{eqn3.7a}
\sqrt{\e}\|\Delta v \|_{L^2(\W)} + \sqrt{\lambda}\|\grad v \|_{L^2(\W)} &\lss \|\mLe v \|_{H^{-1}(\W)} 
+ \| v \|_{L^2(\W)} \\
&\quad+ \e(\| \grad v \|_{L^2(\W)} +  \| \Delta v \|_{L^2(\W)}).  \nonumber 
\end{align}
\end{lemma}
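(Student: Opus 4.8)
The plan is to obtain \eqref{eqn3.7}--\eqref{eqn3.7a} by combining the interior G\r{a}rding inequality of Lemma \ref{lem3.2} with a matching estimate in a boundary layer. Fix a finite cover of $\overline\W$ by one interior set $U_0\subset\subset\W$ and finitely many balls $U_1,\dots,U_N$, each $U_j$ ($j\ge1$) centered at a point $x_j\in\partial\W$ and taken small enough both for Lemma \ref{lem3.1} and for the flattening below, together with a smooth partition of unity $\{\psi_j\}_{j=0}^N$ subordinate to it. Writing $v=\sum_j\psi_j v$ and noting $\psi_j v\in H$, the interior contribution $\psi_0v$ is estimated directly by Lemma \ref{lem3.2} on a fixed $\W'\subset\subset\W$ containing $\operatorname{supp}\psi_0$. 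It then remains to bound $\sqrt\e\|\grad\Delta(\psi_jv)\|_{L^2(\W)}+\sqrt\lambda\|D^2(\psi_jv)\|_{L^2(\W)}$ (and its $H^{-1}$ analogue) for a boundary patch; since $D^2v=\sum_jD^2(\psi_jv)$, summing such contributions and disposing of the commutator terms that $\Delta^2$ and $A:D^2$ generate when hitting the cutoffs against $v,\grad v,\Delta v,\grad\Delta v$ --- exactly the bookkeeping already carried out in \eqref{eqn3.4}--\eqref{eqn3.6} --- yields \eqref{eqn3.7}--\eqref{eqn3.7a}.

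For a boundary patch at $x_0\in\partial\W$, I would use $\partial\W\in C^{2,1}$ to pick a $C^{2,1}$ diffeomorphism $\Phi$ straightening $\partial\W$ locally onto $\{y_n=0\}$, normalized so that $D\Phi(x_0)$ is orthogonal. Pushing the equation forward, $w:=v\circ\Phi$ lies in $H^2\cap H_0^1$ of the upper half-ball $B^+$, vanishes together with $\grad w$ and $\Delta w$ near the spherical portion $(\partial B^+)^+$, satisfies $w=0$ on $T:=B\cap\{y_n=0\}$, and solves $\e\,\widehat{\Delta^2}w-\widehat A:D^2w+\mathcal R^\e w=\widehat f$, where $\widehat{\Delta^2}$ is a \emph{divergence-form} fourth-order operator whose top-order coefficients are continuous and agree with those of $\Delta_y^2$ at $x_0$ (hence are uniformly close to them on a small ball); $\widehat A$ is continuous and elliptic with $\widehat A(x_0)$ a constant positive definite matrix still satisfying \eqref{eqn1.2}; and $\mathcal R^\e$ collects the lower-order terms produced by the change of variables --- those from the biharmonic carrying a factor $\e$ and involving at most third-order derivatives with bounded coefficients (this is precisely why $C^{2,1}$ suffices), those from the second-order term being first-order in $w$ with continuous coefficients and no factor $\e$. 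I then rerun the energy argument behind Lemmas \ref{lem2.2} and \ref{lem2.1b} on this operator: testing against $-\Delta_yw$ (after the reflection device of Lemma \ref{lem2.1b}) and integrating by parts in the divergence form of $\widehat{\Delta^2}$ reproduces $\e\|\grad\Delta_yw\|^2+\lambda\|D^2w\|^2$ up to (i) deviations of the top-order coefficients from the flat constant ones, absorbed on the left for a small enough ball; (ii) $\e$-weighted terms of order $\le3$, kept on the right, which match the terms $\e(\|\grad v\|+\|\Delta v\|+\|\grad\Delta v\|)$ allowed in \eqref{eqn3.7}; and (iii) the bare first-order term from $\widehat A:D^2$, handled by interpolation $\|\grad w\|_{L^2}\lss\d'\|D^2w\|_{L^2}+C(\d')\|w\|_{L^2}$ (cf.\ \cite[p.236]{Sp:DT}), the Hessian part absorbed and the $L^2$ part sent to the right. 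Undoing $\Phi$ with the $\e$-independent equivalence of $H^k$ norms under a $C^{2,1}$ map gives the boundary estimate behind \eqref{eqn3.7}; the $H^{-1}$ version of \eqref{eqn3.7a} follows the same route, invoking Lemma \ref{lem2.1} and Lemma \ref{hm1lemma} to pass the coefficient perturbation through $H^{-1}$ exactly as in the proof of \eqref{eqn3.1a}.

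The step I expect to be the main obstacle is the correct transformation and treatment of the simply supported condition $\Delta u^\e=0$. It does not become $\Delta_yw=0$ on $T$ but $\widehat\Delta w:=\widehat a^{kl}\partial_k\partial_lw+\widehat b^{\,k}\partial_kw=0$ on $T$, so the naive odd reflection --- which is what makes Lemma \ref{lem2.1b} work for the flat, constant-coefficient operator --- does not transplant directly, and the integration by parts against $-\Delta_yw$ leaves a boundary integral over $T$ involving $\Delta_yw|_T$. The key point to verify is that on $T$ one has $\Delta_yw=(\delta^{kl}-\widehat a^{kl})\partial_k\partial_lw-\widehat b^{\,n}\partial_nw$ (all tangential derivatives of $w$ vanishing on $T$ since $w\equiv0$ there), so this defect is either of strictly lower order relative to the principal left-hand side or carries the small factor $\|\widehat A-\widehat A(x_0)\|_{L^\infty(B)}$, hence is absorbable or may be relegated to the right. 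Equally delicate, and related, is that one must \emph{never} split off a bare constant-coefficient biharmonic $\e\Delta_y^2w$ as was done for the second-order part in the interior estimate \eqref{eqn3.1}, because $w$ is only known to satisfy $\grad\Delta w\in L^2$ and not $D^4w\in L^2$; the biharmonic perturbation must instead be absorbed at the level of the energy identity while $\widehat{\Delta^2}$ is still written in divergence form. Keeping careful track of which correction terms carry the factor $\e$, which carry the smallness $\d$, and which must pass through the interpolation step is where essentially all of the work lies; the covering/partition-of-unity argument and the norm equivalence under $\Phi$ are routine.
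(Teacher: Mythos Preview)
Your proposal is correct and follows essentially the same route as the paper: flatten the boundary using the $C^{2,1}$ regularity, invoke the half-ball constant-coefficient estimate of Lemma~\ref{lem2.1b} together with the freezing-coefficient step of Lemma~\ref{lem3.1} to obtain a local boundary analogue of \eqref{eqn3.1}--\eqref{eqn3.1a}, and then combine with the interior estimate of Lemma~\ref{lem3.2} via a covering argument. In fact you have been considerably more careful than the paper's own (very terse) proof about the genuine subtleties of the flattening step---the transformation of $\Delta^2$ into a variable-coefficient divergence-form fourth-order operator, the fact that $\Delta u^\e=0$ does not become $\Delta_y w=0$ on the flat boundary, and the need to keep the biharmonic perturbation at the level of the energy identity rather than splitting it off pointwise---all of which the paper sweeps into the single phrase ``we may flatten $\partial\W$ near $x_0$.''
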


\begin{proof}
Since $\partial\W\in C^{2,1}$, for any $x_0\in\partial\W$, we may flatten $\partial\W$ near $x_0$ and use Lemma \ref{lem2.1b} and the proof in Lemma \ref{lem3.1} to create a local boundary estimate mimicking \eqref{eqn3.1} and \eqref{eqn3.1a}.  Following the same argument as in Lemma \ref{lem3.2} we can obtain estimates \eqref{eqn3.2} and \eqref{eqn3.2a} near the boundary.  These estimates combined with \eqref{eqn3.2} and \eqref{eqn3.2a} give us \eqref{eqn3.7} and \eqref{eqn3.7a}.  The proof is complete.
\end{proof}

Now we must deal with the terms involving $\e$ on the right hand side of \eqref{eqn3.7} and \eqref{eqn3.8}.  
Noting that $\e/\sqrt{\e}\to 0$ as $\e\to 0$, we may hide these terms for $\e$ sufficiently small.   

\begin{lemma} \label{lem3.4}
Let $\partial\W\in C^{2,1}$ and $v\in H$.  There exits $\e_0>0$ such that for any $\e<\e_0$ the following estimates hold: 
\begin{align} \label{eqn3.8}
&\sqrt{\e}\|\grad\Delta v \|_{L^2(\W)} + \sqrt{\lambda}\| D^2 v \|_{L^2(\W)} + \sqrt{\lambda}\| \grad v \|_{L^2(\W)} \lss \|\mLe v \|_{L^2(\W)} + \| v \|_{L^2(\W)} , \\
&\sqrt{\e}\|\Delta v \|_{L^2(\W)} + \sqrt{\lambda}\| \grad v \|_{L^2(\W)} \lss \|\mLe v \|_{H^{-1}(\W)} + \| v \|_{L^2(\W)} . \label{eqn3.8a}
\end{align}
\end{lemma}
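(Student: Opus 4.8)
The plan is to obtain \eqref{eqn3.8} and \eqref{eqn3.8a} from the estimates \eqref{eqn3.7} and \eqref{eqn3.7a} of Lemma \ref{lem3.3} by absorbing the $\e$-weighted terms on their right-hand sides into the left-hand sides. The mechanism is that every offending term carries a full factor of $\e$, whereas the matching term on the left carries only $\sqrt{\e}$ (or a fixed positive power of $\lambda$); since $\e/\sqrt{\e}=\sqrt{\e}\to 0$ as $\e\to 0$, for $\e$ below a threshold $\e_0$ the offending terms can be moved across at the cost of halving the coefficients on the left.

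First I would record two elementary facts. Pointwise $|\Delta v|\le\sqrt{n}\,|D^2v|$ by the Cauchy--Schwarz inequality, hence $\|\Delta v\|_{L^2(\W)}\le\sqrt{n}\,\|D^2v\|_{L^2(\W)}$; and, since $\W$ is bounded, the Poincar\'e inequality gives $\|g\|_{H^{-1}(\W)}\lss\|g\|_{L^2(\W)}$, so in particular $\|\mLe v\|_{H^{-1}(\W)}\lss\|\mLe v\|_{L^2(\W)}$. Next, to supply the extra term $\sqrt{\lambda}\,\|\grad v\|_{L^2(\W)}$ that appears on the left of \eqref{eqn3.8} but not of \eqref{eqn3.7}, I would apply \eqref{eqn3.7a} together with the $H^{-1}\lss L^2$ bound and add the result to \eqref{eqn3.7}, producing
\[
\sqrt{\e}\|\grad\Delta v\|_{L^2(\W)}+\sqrt{\lambda}\|D^2v\|_{L^2(\W)}+\sqrt{\lambda}\|\grad v\|_{L^2(\W)}\lss\|\mLe v\|_{L^2(\W)}+\|v\|_{L^2(\W)}+\e\bigl(\|\grad v\|_{L^2(\W)}+\|\Delta v\|_{L^2(\W)}+\|\grad\Delta v\|_{L^2(\W)}\bigr).
\]
On the right I would then write $\e\|\grad\Delta v\|_{L^2(\W)}=\sqrt{\e}\,\bigl(\sqrt{\e}\|\grad\Delta v\|_{L^2(\W)}\bigr)$, bound $\e\|\Delta v\|_{L^2(\W)}\le(\sqrt{n}\,\e/\sqrt{\lambda})\cdot\sqrt{\lambda}\|D^2v\|_{L^2(\W)}$, and write $\e\|\grad v\|_{L^2(\W)}=(\e/\sqrt{\lambda})\cdot\sqrt{\lambda}\|\grad v\|_{L^2(\W)}$; each prefactor tends to $0$ with $\e$, so for $\e<\e_0$, with $\e_0$ depending only on $n$, $\lambda$, and the (fixed) constants furnished by Lemma \ref{lem3.3}, the three terms are absorbed into the left, yielding \eqref{eqn3.8}.

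For \eqref{eqn3.8a} I would apply the same absorption directly to \eqref{eqn3.7a}, writing $\e\|\Delta v\|_{L^2(\W)}=\sqrt{\e}\,\bigl(\sqrt{\e}\|\Delta v\|_{L^2(\W)}\bigr)$ and $\e\|\grad v\|_{L^2(\W)}=(\e/\sqrt{\lambda})\cdot\sqrt{\lambda}\|\grad v\|_{L^2(\W)}$, shrinking $\e_0$ if necessary. There is no genuine obstacle beyond bookkeeping; the one point deserving care is that $\e_0$ must be chosen uniformly in $v$, which is automatic since $n$, $\lambda$, and the constants from Lemma \ref{lem3.3} are all independent of $v$ and of $\e$.
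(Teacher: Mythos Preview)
Your proposal is correct and follows essentially the same approach as the paper: add \eqref{eqn3.7a} to \eqref{eqn3.7} (using $\|\mLe v\|_{H^{-1}(\W)}\lss\|\mLe v\|_{L^2(\W)}$) to furnish the extra $\sqrt{\lambda}\,\|\grad v\|_{L^2(\W)}$ term, then absorb the $\e$-weighted terms into the $\sqrt{\e}$- and $\sqrt{\lambda}$-weighted left-hand side for $\e$ below a threshold independent of $v$. The only cosmetic difference is that the paper retains $\sqrt{\e}\,\|\Delta v\|_{L^2(\W)}$ on the left after the addition and absorbs $\e\|\Delta v\|_{L^2(\W)}$ into it (dropping it afterward), whereas you absorb $\e\|\Delta v\|_{L^2(\W)}$ into $\sqrt{\lambda}\,\|D^2v\|_{L^2(\W)}$ via $|\Delta v|\le\sqrt{n}\,|D^2v|$; either route yields \eqref{eqn3.8}, and \eqref{eqn3.8a} follows directly from \eqref{eqn3.7a} by the same absorption.
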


\begin{proof}
Adding \eqref{eqn3.7} and \eqref{eqn3.7a} and noting that $\| \mLe v \|_{H^{-1}(\W)}\leq \| \mLe v \|_{L^2(\W)}$  yields 
\begin{align} \label{eqn3.9}
&\sqrt{\e}\|\grad\Delta v \|_{L^2(\W)} +\sqrt{\e}\|\Delta v \|_{L^2(\W)} + \sqrt{\lambda}\| D^2 v \|_{L^2(\W)} + \sqrt{\lambda}\| \grad v \|_{L^2(\W)} \\
&\, \leq C\left( \|\mLe v \|_{L^2(\W)} + \| v \|_{L^2(\W)} + \e(\| \grad v \|_{L^2(\W)} +  \| \Delta v \|_{L^2(\W)} + \| \grad\Delta v \|_{L^2(\W)}) \right) , \nonumber
\end{align}
where $C$ is independent of $\e$.  Choosing $\e_0 = \min\{4/C^2,\sqrt{\lambda}/(2C)\}$ 
gives us $C\e < \sqrt{\e}/2$ and $C\e  < \sqrt{\lambda}/2$ for all $\e<\e_0$.  
Letting $\e< \e_0$, we can move the terms with coefficient $\e$ on the right hand 
side of \eqref{eqn3.9} and obtain
\begin{align} \label{eqn3.10}
\begin{split}
\sqrt{\e}\|\grad\Delta v \|_{L^2(\W)} +\sqrt{\e}\|\Delta v \|_{L^2(\W)} +& \sqrt{\lambda}\| D^2 v \|_{L^2(\W)} + \sqrt{\lambda}\| \grad v \|_{L^2(\W)} \\
&\leq C\left( \|\mLe v \|_{L^2(\W)} + \| v \|_{L^2(\W)} \right).
\end{split}
\end{align}
Dropping $\sqrt{\e}\|\Delta v \|_{L^2(\W)}$ gives us \eqref{eqn3.8}.  
\eqref{eqn3.8a} is an immediate consequence of \eqref{eqn3.7a}.  The proof is complete.
\end{proof}

We can derive a full stability estimate from Lemma \ref{lem3.4} through contradiction and showing that $\mLe$ is 1-1.  

\begin{lemma} \label{lem3.5}
Let $\partial\W\in C^{2,1}$ and $v\in H$.  For all $\e < \e_0$, we have the following stability estimate:
\begin{align} \label{eqn3.11}
\sqrt{\e}\|\grad\Delta v \|_{L^2(\W)} + \sqrt{\lambda}\| D^2 v \|_{L^2(\W)} + \sqrt{\lambda}\| \grad v \|_{L^2(\W)} &\leq C\|\mLe v \|_{L^2(\W)} , \\ 
\sqrt{\e}\|\Delta v\|_{L^2(\W)} + \sqrt{\lambda}\| \grad v \|_{L^2(\W)} &\leq C\|\mLe v \|_{H^{-1}(\W)} , \label{eqn3.11a}
\end{align}
where $C$ in independent of $\e$ and $u^\e$.
\end{lemma}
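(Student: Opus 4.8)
The plan is to absorb the lower-order term $\|v\|_{L^2(\W)}$ on the right of \eqref{eqn3.8}--\eqref{eqn3.8a} by a compactness--contradiction argument, arranged so that the resulting constant $C$ is uniform in $\e<\e_0$. Suppose \eqref{eqn3.11} fails with a constant independent of $\e$; then there are $\e_k\in(0,\e_0)$ and $v_k\in H$ with $\mLe v_k\in L^2(\W)$ (so the right-hand side is finite) for which, after normalizing,
\[
\sqrt{\e_k}\,\|\grad\Delta v_k\|_{L^2(\W)}+\sqrt{\lambda}\,\|D^2 v_k\|_{L^2(\W)}+\sqrt{\lambda}\,\|\grad v_k\|_{L^2(\W)}=1,\qquad \|\mLe v_k\|_{L^2(\W)}\to 0 .
\]
Feeding this into \eqref{eqn3.8} shows that $\|v_k\|_{L^2(\W)}$ stays bounded away from $0$. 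The normalization bounds $\{v_k\}$ in $H^2(\W)\cap H_0^1(\W)$ and $\{\e_k\}$ in $[0,\e_0]$, so along a subsequence $v_k\wto v^*$ in $H^2(\W)$, $v_k\to v^*$ in $H^1(\W)$ by Rellich, and $\e_k\to\e^*\in[0,\e_0]$; strong $L^2$-convergence then forces $\|v^*\|_{L^2(\W)}=\lim_k\|v_k\|_{L^2(\W)}>0$, hence $v^*\ne 0$.

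I would then pass to the limit in the identity $(\mLe v_k,w)=\e_k(\Delta v_k,\Delta w)-(A:D^2v_k,w)$, which holds for every $w\in H^2(\W)\cap H_0^1(\W)$: the left side tends to $0$, the term $(A:D^2v_k,w)$ tends to $(A:D^2v^*,w)$ by weak $H^2$-convergence, and $\e_k(\Delta v_k,\Delta w)\to\e^*(\Delta v^*,\Delta w)$ (when $\e^*>0$ one first sharpens the compactness, using that $\grad\Delta v_k$ is bounded in $L^2$ so $\Delta v_k$ is bounded in $H_0^1$, hence $\Delta v_k\to\Delta v^*$ strongly in $L^2$ and $v^*\in H$). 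Thus $v^*$ solves the homogeneous equation for the operator $\mLe$ at the parameter $\e^*$. If $\e^*=0$ this reads $-A:D^2v^*=0$ a.e.\ with $v^*\in H^2(\W)\cap H_0^1(\W)$, so $v^*=0$ by the Calder\'on--Zygmund estimate \eqref{eqn1.3}; if $\e^*>0$, then $v^*$ lies in the kernel of $\mLe$ at $\e^*$, and one concludes $v^*=0$ as soon as that operator is one-to-one on $H$. Either way $v^*\ne 0$ is contradicted, proving \eqref{eqn3.11}; \eqref{eqn3.11a} follows by the same scheme starting from \eqref{eqn3.8a}, with Lemma \ref{hm1lemma} used to control the Hessian term when passing to the limit in the weaker ($H^{-1}$) topology.

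The main obstacle is the uniformity in $\e$, i.e.\ the case $\e^*>0$ this produces: a hypothetical bad sequence $\e_k$ need not tend to $0$, so the limiting problem is the fourth-order equation at parameter $\e^*$ rather than the original second-order PDE, and one must know that $\mLe$ is one-to-one on $H$ for each fixed $\e\in(0,\e_0]$. I expect this injectivity to be where the real work lies. A natural route is to observe that, under the simply supported boundary conditions, $v\mapsto -A:D^2v$ is a compact perturbation of the invertible biharmonic operator $\e\Delta^2$ (the continuity of $A$ entering through the fact that multiplication by $A$ maps $H^2(\W)$ compactly into $L^2(\W)$), so that $\mLe$ is Fredholm of index zero and hence one-to-one precisely when the homogeneous problem has only the trivial solution; one then supplies this uniqueness, for instance via the G\r{a}rding inequality for the bilinear form $\e(\Delta\cdot,\Delta\cdot)-(A:D^2\cdot,\cdot)$ (whose compact part is controlled by Lemma \ref{hm1lemma}) together with a continuation argument connecting $\mLe$ to the coercive operator $\e\Delta^2-\overline{\lambda}\,\Delta$, whose injectivity on $H$ is immediate upon testing with $v$. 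A secondary point to check is that enough biharmonic regularity up to the $C^{2,1}$ boundary is available to justify the limit passages and the bootstrapping of kernel elements; the soft compactness steps are routine by comparison.
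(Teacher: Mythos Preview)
Your compactness--contradiction framework is the same as the paper's, but two points differ. First, the paper fixes a single $\e<\e_0$ and runs the contradiction at that parameter, whereas you let $\e_k$ vary and split on $\e^*=\lim\e_k$; your version directly produces the $\e$-independent constant claimed in the statement, while the paper's argument as written yields a constant for each fixed $\e$ and leaves the uniformity implicit.

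Second, and more substantively, the injectivity of $\mLe$ (your case $\e^*>0$) is handled by a different mechanism in the paper. You propose Fredholm index zero together with a continuation to the coercive operator $\e\Delta^2-\overline{\lambda}\,\Delta$; the paper instead argues spectrally: writing $\mLe=\e\mK+\mL$ with $\mK$ the (invertible, symmetric, positive) biharmonic operator, a nontrivial kernel element would make $-\e$ an eigenvalue of $\mK^{-1}\mL$, and the paper shows (adapting the proof in \cite[p.~361]{Evans2010}) that all eigenvalues of $\mK^{-1}\mL$ have positive real part. This is short and avoids any homotopy. Your Fredholm observation is correct---the lower-order part is compact because it factors through the compact embedding $L^2(\W)\hookrightarrow(H^2(\W)\cap H_0^1(\W))^*$, not because ``multiplication by $A$ maps $H^2$ compactly into $L^2$'' as you wrote---but the continuation step as sketched is incomplete: the method of continuity requires a~priori bounds uniform along the path, which is precisely what you are trying to prove, and mere constancy of the Fredholm index does not force trivial kernel at every point of the path. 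If you pursue your route you still need a separate uniqueness argument at each $\e$; the paper's eigenvalue argument is one clean way to supply it.
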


\begin{proof}
Fix $\e< \e_0$.  
We first show that the operator $\mLe$ is 1-1 using eigenvalue theory.  
Define $H^{-2}(\W) = (H^2(\W)\cap H_0^1(\W))^*$ and $\mK:H^2(\W)\cap H_0^1(\W)\to H^{-2}(\W)$ by
\[
	(\mK w,v) = (\Delta w,\Delta v) \quad\forall v\in H^2(\W)\cap H_0^1(\W).
\]
By the elliptic existence theory for fourth order problems, $\mK$ is invertible with $\mK^{-1}$ bounded.
We see that the operator $\mLe=\e\mK + \mL$  satisfies
\begin{align}\label{eqn1.6}
(\mLe w,v) = \e(\Delta w,\Delta v) - (A:D^2 w,v)  \quad \forall v\in H^2(\W)\cap H_0^1(\W).  
\end{align}
Thus, a weak solution $u^\e$ to \eqref{eqn1.4} satisfies $\mLe u^\e = f$ in $H^{-2}(\W)$.     

Suppose $\mLe$ is not 1-1.  
Then there exists $w \neq 0$ such that $\mLe w = \mL w + \e\mK w = 0$ in $H^{-2}(\W)$.  Hence, $\mL w = -\mK(\e w)$ in $H^{-2}(\W)$.  Since $\mK$ is invertible we have, $\mKinv\mL w = -\e w$ in $H^2(\W)$. Thus, $-\e$ is an eigenvalue for $\mKinv\mL$.  Since $\mKinv$ is a symmetric and compact operator, its eigenvalues are positive and tend to 0.  Moreover, the eigenvalues of $\mL$, while complex, have real parts greater than some positive constant $r$ \cite[p.361]{Evans2010}.  
Since $ \mK^{-1}$ is positive definite, 
repeating the proof given in \cite[p.361]{Evans2010}, we can show that 
the eigenvalues of $\mKinv\mL$ must have positive real parts
(one may verify easily the conclusion in the finite-dimensional case).
Thus $-\e$ cannot be an eigenvalue, which is a contradiction.   Hence,  $\mLe$ must be 1-1.

To show \eqref{eqn3.11} we argue by contradiction.  Suppose that, for all $k\inN$, there exists $v_k\in H$ with $\|v_k\|_{L^2(\W)}=1$ and $\|\mLe v_k \|_{L^2(\W)}\to 0$ as $k\to\infty$.  By Lemma \ref{lem3.4} we have  $\sqrt{\e}\|\grad\Delta v_k\|_{L^2(\W)} + \| v_k \|_{H^2(\W)}$ is uniformly bounded in $k$.  Thus, we way may extract a convergent subsequence (not relabeled) and a $v_*\in H$ such that $v_k\wto v_*$ weakly in $H$ and $\sqrt{\e}\|\grad\Delta v_*\|_{L^2(\W)} + \| v_*\|_{H^2(\W)} > 0$.  Moreover, since $v_k\wto v_*$ and $\mLe$ is linear, we have $\mLe v_k\wto \mLe v_*$ in $H^{-2}(\W)$ and  
\[
0 \leq \|\mLe v_* \|_{H^{-2}(\W)} \leq \liminf_{k\to\infty} \|\mLe v_k \|_{H^{-2}(\W)} \leq \liminf_{k\to\infty} \|\mLe v_k \|_{L^2(\W)} = 0.
\]
Thus, $\|\mLe v_* \|_{H^{-2}(\W)}=0$ and consequently $\mLe v_* = 0$ in $H^{-2}(\W)$.  
Clearly $u^\e \equiv 0$ is a solution to \eqref{eqn1.4} when $f \equiv 0$.  
Since $\mLe$ is 1-1, 
then $v_*=0$ in $H$ which is a contradiction to the fact that $\sqrt{\e}\|\grad\Delta v_*\|_{L^2(\W)}  + \| v_*\|_{H^2(\W)} > 0$. The proof is complete.

%We conjecture that the eigenvalues of $\mKinv\mL$ must have positive real parts.  
%{\color{red}
%Proof for matrices: 
%$K^{-1}$ SPD and $L$ PD.  
%$K^{-1} L = \sqrt{K^{-1}} \left( \sqrt{K^{-1}} L \right)$.  
%The nonzero eigenvalues of $\sqrt{K^{-1}} \left( \sqrt{K^{-1}} L \right)$ and $\left( \sqrt{K^{-1}} L %\right) \sqrt{K^{-1}}$ 
%are the same.  
%Since $\sqrt{K^{-1}} L \sqrt{K^{-1}}$ is positive definite,
%all eigenvalues have positive real part.    
%}
\end{proof}

We are now ready to show the well-posedness of $\mLe$.

\begin{theorem} \label{thm:exist}
Let $\e<\e_0$ and $\partial\W\in C^{2,1}$.  Then, for every $f\in L^2(\W)$, there exists a unique weak solution $u^\e\in H$ to \eqref{eqn1.4}.  Moreover, the weak solution satisfies
\begin{align} \label{eqn3.11stab}
\sqrt{\e}\|\grad\Delta u^\e \|_{L^2(\W)} + \sqrt{\lambda}\| D^2 u^\e \|_{L^2(\W)} + \sqrt{\lambda}\| \grad u^\e \|_{L^2(\W)} &\leq C\|f \|_{L^2(\W)} , \\ 
\sqrt{\e}\|\Delta u^\e \|_{L^2(\W)} + \sqrt{\lambda}\| \grad u^\e \|_{L^2(\W)} &\leq C\| f\|_{H^{-1}(\W)} . \label{eqn3.11staba}
\end{align}
\end{theorem}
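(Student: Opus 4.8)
The plan is to deduce well-posedness from the a priori estimates of Lemma~\ref{lem3.5} via a Fredholm-alternative argument, followed by a short elliptic regularity bootstrap. Note that Lax--Milgram is not directly available here: for a merely continuous $A$ the term $-(A:D^2v,v)$ in \eqref{eqn:weaksoln} cannot be integrated by parts, so no coercivity of the bilinear form is at hand — this is exactly why the freezing-coefficient estimates were developed.

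First I would show that $\mLe$ is a bijection from $H^2(\W)\cap H_0^1(\W)$ onto $H^{-2}(\W)$. Writing $\mLe=\e\mK+\mL$ as in \eqref{eqn1.6}, with $\mK$ boundedly invertible, one observes that $\mL v=-A:D^2v$ maps $H^2(\W)\cap H_0^1(\W)$ boundedly into $L^2(\W)$ and that the inclusion $L^2(\W)\hookrightarrow H^{-2}(\W)$ is compact, being the adjoint of the compact Rellich embedding $H^2(\W)\cap H_0^1(\W)\hookrightarrow L^2(\W)$. Hence $\mKinv\mL$ is a compact operator on $H^2(\W)\cap H_0^1(\W)$, so $\mKinv\mLe=\e I+\mKinv\mL$ is a compact perturbation of the identity and therefore Fredholm of index zero; since $\mLe$, and hence $\mKinv\mLe$, was already shown to be injective in the proof of Lemma~\ref{lem3.5}, index zero forces surjectivity, and $\mLe=\mK(\mKinv\mLe)$ is a bijection onto $H^{-2}(\W)$. (An alternative here is the method of continuity applied to $\e\mK+t\mL$, $t\in[0,1]$: the constant in Lemma~\ref{lem3.5} depends on $A$ only through $\lambda$, $\overline\lambda$, and the modulus of continuity of $A$, hence is uniform in $t$.)

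Next, given $f\in L^2(\W)\subset H^{-2}(\W)$, the previous step yields a unique $u^\e\in H^2(\W)\cap H_0^1(\W)$ with $\mLe u^\e=f$ in $H^{-2}(\W)$, i.e. a weak solution in the sense of \eqref{eqn:weaksoln}. To upgrade $u^\e$ to an element of $H$ I would argue that, since $u^\e\in H^2(\W)$ and $A$ is bounded, $g:=f+A:D^2u^\e\in L^2(\W)$, and $\e\Delta u^\e$ is the weak solution of $\Delta(\e\Delta u^\e)=g$ in $\W$ with $\e\Delta u^\e=0$ on $\partial\W$; as $\partial\W\in C^{2,1}$, standard elliptic regularity for the Laplacian gives $\e\Delta u^\e\in H^2(\W)\cap H_0^1(\W)$, so in particular $\Delta u^\e\in H_0^1(\W)$ and thus $u^\e\in H$.

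Finally I would apply Lemma~\ref{lem3.5} with $v=u^\e\in H$: since $\mLe u^\e=f$ and $f\in L^2(\W)$, so that $\|\mLe u^\e\|_{H^{-1}(\W)}=\|f\|_{H^{-1}(\W)}$, estimates \eqref{eqn3.11} and \eqref{eqn3.11a} become exactly \eqref{eqn3.11stab} and \eqref{eqn3.11staba}; uniqueness follows either from the injectivity established in the first step or by applying \eqref{eqn3.11stab} to the difference of two solutions with $f\equiv0$. I expect the main obstacle to be the third paragraph — confirming that the weak solution has the extra regularity $\Delta u^\e\in H_0^1(\W)$ (equivalently, that the simply supported boundary condition is attained in the $H^1$ sense) so that Lemma~\ref{lem3.5} is applicable — together with verifying the compactness input that makes the Fredholm step go through.
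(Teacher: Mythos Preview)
Your proposal is correct and takes a genuinely different route from the paper's proof. The paper argues by \emph{coefficient approximation}: it mollifies $A$ to $A_k\in[C^1(\overline\W)]^{n\times n}$, so that $-A_k:D^2v=-\div(A_k\grad v)+\div(A_k)\cdot\grad v$ becomes a divergence-form perturbation and the resulting fourth-order problem has a weak solution $u_k^\e\in H$ by classical theory; the uniform bound of Lemma~\ref{lem3.5} then gives weak compactness of $\{u_k^\e\}$ in $H$, and any weak limit solves \eqref{eqn1.4}. Your argument instead works directly at the operator level via the Fredholm alternative on $\e I+\mKinv\mL$, then recovers $\Delta u^\e\in H_0^1(\W)$ a posteriori by a Poisson-regularity bootstrap. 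Both approaches ultimately rest on the injectivity proved inside Lemma~\ref{lem3.5} and on the estimates \eqref{eqn3.11}--\eqref{eqn3.11a}. The paper's approximation route is more constructive and gets $u^\e\in H$ for free (the approximants already live in $H$), while your Fredholm route is cleaner and avoids the auxiliary sequence, at the price of the extra regularity step; that step does need a word of care --- a priori $\Delta u^\e$ is only in $L^2(\W)$, so one should first produce the $H^2\cap H_0^1$ solution $\phi$ of $\Delta\phi=g/\e$ and then use surjectivity of $\Delta:H^2(\W)\cap H_0^1(\W)\to L^2(\W)$ (valid for $\partial\W\in C^{1,1}$) to conclude $\Delta u^\e=\phi$ from $(\Delta u^\e-\phi,\Delta v)=0$ for all $v\in H^2(\W)\cap H_0^1(\W)$. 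Your parenthetical method-of-continuity alternative is less safe: the constant in Lemma~\ref{lem3.5} is obtained by contradiction and is not manifestly uniform in the homotopy parameter $t$, so the Fredholm argument is the more robust of the two options you mention.
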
  

\begin{proof}
We approximate $A$ by $A_k\in [C^1(\overline\W)]^{n\times n}$, where $A_k\to A$ uniformly in $\W$.  
Since $A_k$ is differentiable, 
\[
-A_k:D^2 v = -\div(A_k\grad v)+ \div(A_k)\cdot \grad v , 
\]
which converts the non-divergence operator into a sum of a diffusion operator and an advection operator.  Consider weak solutions to the problem 
\begin{align}  \label{eqnapprox}
\begin{split} 
 \e\Delta^2v -\div(A_k\grad v) + \div(A_k)\cdot \grad v &= f \text{ in } \W , \\ 
v &= 0 \text{ on } \partial\W , \\
\Delta v &= 0 \text{ on } \partial\W.
\end{split}
\end{align} \addtocounter{equation}{1}
By the fourth-order elliptic PDE theory \cite[Chapter 8]{Agmon10}, there is a weak solution $u_k^\e\in H^2(\W)\cap H_0^1(\W)$ to \eqref{eqnapprox}.  Moreover, $u_k^\e\in H$ since $\partial\W\in C^{2,1}$ and, 
by Lemma~\ref{lem3.5}, $u_k^\e$ satisfies
\begin{align} \label{eqn3.11.1}
\sqrt{\e}\|\grad\Delta u_k^\e \|_{L^2(\W)} + \sqrt{\lambda}\| D^2 u_k^\e \|_{L^2(\W)} + \sqrt{\lambda}\| \grad u_k^\e \|_{L^2(\W)} &\lss \|f \|_{L^2(\W)} , \\ 
\sqrt{\e}\|\Delta u_k^\e \|_{L^2(\W)} + \sqrt{\lambda}\| \grad u_k^\e \|_{L^2(\W)} &\lss \|f \|_{H^{-1}(\W)} . \label{eqn3.11.2}
\end{align}
Thus, $\{u_k^\e\}_k$ is bounded in $H$, and it follows that $u_k^\e$ weakly converges to some $u^\e\in H$.  By the linearity of the PDE, $u^\e$ is a weak solution of \eqref{eqn1.4}.  Since $\mLe$ is 1-1, the solution $u^\e$ is unique.  Taking the $\liminf$ as $k\to\infty$ of \eqref{eqn3.11.1} and \eqref{eqn3.11.2} we arrive at \eqref{eqn3.11stab} and \eqref{eqn3.11staba}.  The proof is complete.
\end{proof}

\begin{remark}
In the above three lemmas we require that $\partial \W\in C^{2,1}$. This is imposed in order to achieve the global estimates using a boundary flattening technique. More precisely, we use the assumption to preserve $\|\grad\Delta v\|_{L^2}$ when flattening the boundary, $\partial\W$.  Thus, to guarantee $u^\e\in H$, we need $\partial\W\in C^{2,1}$.  We note that if $\partial\W\in C^{1,1}$, the argument still 
works, but all $\|\grad\Delta v\|_{L^2}$ terms must be dropped implying $u^\e$ is only in 
$H^2$. This does not affect convergence but does affect the $H^1$ error estimate in Theorem \ref{thm_eps_rates} below.  
%In the FE portion, we only assume the stability estimate for $\mLe$ and existence and  uniqueness.  Which has been clarified.
\end{remark}

We now prove the convergence of the solutions $u^\e\to u$, where $u$ is a strong solution to \eqref{eqn1.1}.  We also give an $H^1$ stability result which will be useful for the numerical discretization of \eqref{eqn1.1} and has not yet been obtained in the literature. 

\begin{theorem} 
Let $\e<\e_0$ and $u^\e$ be the solution to \eqref{eqn1.4}.  Then $u^\e$ converges to $u\in H^2(\W)\cap H_0^1(\W)$ weakly in $H^2(\W)$, where $u$ is the strong solution to \eqref{eqn1.1} with $b,c\equiv 0$.  Moreover, we have the following $H^1$ stability result for $\mL$:
\begin{align}\label{eqn3.12a}
\|\grad u\|_{L^2(\W)} \lss \|\mL u\|_{H^{-1}(\W)} . 
\end{align}
\end{theorem}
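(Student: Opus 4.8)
The plan is to extract a weak limit of the family $\{u^\e\}_{\e<\e_0}$ using the uniform bounds of Theorem~\ref{thm:exist}, to identify that limit with the strong solution $u$ of \eqref{eqn1.1} via the uniqueness theory of \cite{Sp:DT}, and finally to read off \eqref{eqn3.12a} from the uniform gradient bound together with weak lower semicontinuity. Concretely, by \eqref{eqn3.11stab} (and Poincar\'e's inequality, to control $\|u^\e\|_{L^2(\W)}$ by $\|\grad u^\e\|_{L^2(\W)}$) the family $\{u^\e\}$ is bounded in $H^2(\W)\cap H_0^1(\W)$ uniformly in $\e$; hence there exist a sequence $\e_k\to 0$ and $w\in H^2(\W)\cap H_0^1(\W)$ with $u^{\e_k}\wto w$ weakly in $H^2(\W)$.

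Next I would pass to the limit in the weak formulation \eqref{eqn:weaksoln}. Fix $v\in H^2(\W)\cap H_0^1(\W)$. For the biharmonic term, \eqref{eqn3.11staba} gives
\[
\bigl|\e_k(\Delta u^{\e_k},\Delta v)\bigr| \le \sqrt{\e_k}\,\bigl(\sqrt{\e_k}\,\|\Delta u^{\e_k}\|_{L^2(\W)}\bigr)\,\|\Delta v\|_{L^2(\W)} \lss \sqrt{\e_k}\,\|f\|_{H^{-1}(\W)}\,\|\Delta v\|_{L^2(\W)}\longrightarrow 0 ,
\]
so it is essential here that \eqref{eqn3.11staba} provides the $\sqrt{\e}$-weighted bound on $\|\Delta u^\e\|_{L^2(\W)}$ rather than mere boundedness of $\e\|\Delta u^\e\|_{L^2(\W)}$. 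For the second-order term, write $(A:D^2 u^{\e_k},v)=\sum_{i,j}\bigl((u^{\e_k})_{x_ix_j},a_{ij}v\bigr)$; since $a_{ij}v\in L^2(\W)$ and $D^2 u^{\e_k}\wto D^2 w$ weakly in $L^2(\W)$, this converges to $(A:D^2 w,v)$. Passing to the limit in \eqref{eqn:weaksoln} therefore yields $-(A:D^2 w,v)=(f,v)$ for all $v\in H^2(\W)\cap H_0^1(\W)$, and testing against $C_0^\infty(\W)$ gives $-A:D^2 w = f$ a.e.\ in $\W$. Thus $w\in W^{2,2}(\W)\cap W_0^{1,2}(\W)$ is a strong solution of \eqref{eqn1.1} (with $b,c\equiv 0$), and by the uniqueness of strong solutions \cite{Sp:DT} we conclude $w=u$. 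Since the limit is independent of the chosen subsequence, the whole family converges: $u^\e\wto u$ weakly in $H^2(\W)$ as $\e\to 0$.

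Finally, \eqref{eqn3.12a} is then immediate: $u$ solves $\mL u=f$, and by \eqref{eqn3.11staba} we have $\sqrt{\lambda}\,\|\grad u^\e\|_{L^2(\W)}\le C\|f\|_{H^{-1}(\W)}$ with $C$ independent of $\e$, hence $\|\grad u^\e\|_{L^2(\W)}\lss\|f\|_{H^{-1}(\W)}$ uniformly in $\e$. Since $\grad u^\e\wto\grad u$ weakly in $L^2(\W)$, weak lower semicontinuity of the $L^2$ norm gives
\[
\|\grad u\|_{L^2(\W)} \le \liminf_{\e\to 0}\|\grad u^\e\|_{L^2(\W)} \lss \|f\|_{H^{-1}(\W)} = \|\mL u\|_{H^{-1}(\W)} ,
\]
which is the claimed bound (and is finite since $f\in L^2(\W)\hookrightarrow H^{-1}(\W)$).

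The only genuinely delicate point is the passage to the limit in the perturbed weak form: one must check that the biharmonic term vanishes using the sharp $\sqrt{\e}$-weighted estimate, and that the limiting identity is recovered in the pointwise-a.e.\ (strong) sense, since it is the strong-solution uniqueness theorem of \cite{Sp:DT}---not a weak-solution uniqueness argument---that pins down $w=u$ and thereby upgrades subsequential convergence to convergence of the whole family. Everything else is routine weak compactness and lower semicontinuity.
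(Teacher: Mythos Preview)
Your proof is correct and follows essentially the same approach as the paper: uniform $H^2$ bounds from Theorem~\ref{thm:exist}, weak compactness, passage to the limit in the variational identity, identification of the limit via strong-solution uniqueness, and weak lower semicontinuity for \eqref{eqn3.12a}. The only cosmetic differences are that the paper tests against $\phi\in C_0^\infty(\W)$ rather than all of $H^2\cap H_0^1$, and handles the biharmonic term simply via $\e\,(\Delta u^\e,\Delta\phi)\le \e\,\|\Delta u^\e\|_{L^2}\|\Delta\phi\|_{L^2}\to 0$ using the uniform $H^2$ bound directly---so your appeal to the $\sqrt{\e}$-weighted estimate \eqref{eqn3.11staba} works but is not ``essential'' here, since $\|\Delta u^\e\|_{L^2(\W)}$ is already bounded uniformly in $\e$.
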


\begin{proof}
Since $\mLe u^\e = f$ in $\W$ and $u^\e = 0$ on $\partial\W$, we have the boundedness 
of $\| u^\e \|_{H^2(\W)}$ from the Poincar\'e inequality and Lemma \ref{lem3.5}.  By compactness we can  extract a subsequence $\{u^\e\}$ (not relabeled) and 
$u^*\in H^2(\W)\cap H_0^1(\W)$ such that $u^\e\wto u^*$ weakly in $H^2(\W)$.  
Moreover, since $\mLe u^\e = f$, we have for any $\phi\in C_0^\infty(\W)$
\begin{align} \label{eqn3.12}
\e(\Delta u^\e,\Delta\phi) - (A:D^2u^\e,\phi) = (f,\phi).
\end{align}
Since $\e(\Delta u^\e,\Delta\phi) \leq \e\|\Delta u^\e \|_{L^2(\W)}\|\Delta\phi \|_{L^2(\W)}\to 0$ as $\e\to 0$, letting  $\e\to 0$ in \eqref{eqn3.12} we obtain
\begin{align} \label{eqn3.13}
- (A:D^2u^*,\phi) = (f,\phi) \qquad \forall \phi\in C_0^\infty(\W).
\end{align}
Thus, $u^*$ is a strong solution to \eqref{eqn1.1}.  Since $\mL$ is 1-1, we have $u^*=u$, and the whole sequence $u^\e\wto u$ as $\e \to 0$.  

We now derive \eqref{eqn3.12a}.  Since $\|\mLe u^\e\|_{H^{-1}(\W)}=\|f\|_{H^{-1}(\W)}$ is constant with respect to $\e$, we take the $\liminf$ of \eqref{eqn3.11a} and use $u^\e\wto u$ to obtain 
\begin{align*}
\|\mL u\|_{H^{-1}(\W)} &=\liminf_{\e\to 0} \|\mLe u^\e\|_{H^{-1}(\W)}  \\
&\gss \liminf_{\e\to 0}\left(\sqrt{\e}\|\Delta u^\e \|_{L^2(\W)} + \sqrt{\lambda}\| \grad u^\e \|_{L^2(\W)}\right) \\
&\gss \liminf_{\e\to 0}\sqrt{\e}\|\Delta u^\e \|_{L^2(\W)} + \liminf_{\e\to 0}\sqrt{\lambda}\| \grad u^\e \|_{L^2(\W)} \\
&\gss \| \grad u\|_{L^2(\W)} . 
\end{align*}
The proof is complete.
\end{proof}

Now that we have the existence and convergence of $u^\e$ to the strong solution $u$ of \eqref{eqn1.1}, in $H^2(\W)$, 
we note that indeed the regularity of $u^\e$ can be higher than $u$ -- up to $H^4$ dependent on the smoothness 
of $\partial\W$ -- and is shown in the theorem below.

\begin{theorem}\label{thm_ue_reg}
Let $\e<\e_0$, $\partial\W\in C^3$, and $u^\e\in H$ be the weak solution to \eqref{eqn1.4}.  Then $u^\e\in H^3(\W)$ with
\begin{align} \label{eqn3.30}
\e\|u^\e\|_{H^3(\W)} \lss \|f\|_{L^2(\W)}.
\end{align}
Moreover, if $\partial\W\in C^4$, then $u^\e\in H^4(\W)$ with the estimate
\begin{align} \label{eqn3.30a}
\e\|u^\e\|_{H^4(\W)} \lss \|f\|_{L^2(\W)}.
\end{align}
\end{theorem}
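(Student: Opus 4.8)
The plan is to exploit the fact that, under the simply supported (Navier) boundary conditions $u^\e = \Delta u^\e = 0$ on $\partial\W$, the biharmonic operator $\e\Delta^2$ decouples into a composition of two Dirichlet Laplacians, so that the claimed regularity follows from two successive applications of standard elliptic regularity for the Poisson equation. First I would rewrite \eqref{eqn1.4} as $\e\Delta^2 u^\e = g$ in $\W$ with $g := f + A:D^2u^\e$, and observe that $g\in L^2(\W)$ with $\|g\|_{L^2(\W)} \le \|f\|_{L^2(\W)} + \|A\|_{L^\infty(\W)}\|D^2u^\e\|_{L^2(\W)} \lss \|f\|_{L^2(\W)}$, the last bound coming from \eqref{eqn3.11stab} in Theorem~\ref{thm:exist} (here $\lambda$ is a fixed constant). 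Note that the hypothesis $\e<\e_0$ is used precisely so that Theorem~\ref{thm:exist} applies and supplies this base $H^2$ bound.

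Next I would identify $w := \Delta u^\e$ as the weak solution of the Poisson problem $-\e\Delta w = -g$ in $\W$, $w = 0$ on $\partial\W$. The zero boundary trace is automatic because $u^\e\in H$ forces $\Delta u^\e\in H_0^1(\W)$. To obtain the equation in weak form, I would test \eqref{eqn:weaksoln} with $v = \phi\in C_0^\infty(\W)$, giving $\e(\Delta u^\e,\Delta\phi) = (g,\phi)$, and then integrate by parts once (legitimate since $w = \Delta u^\e\in H^1(\W)$) to get $-\e(\grad w,\grad\phi) = (g,\phi)$ for all $\phi\in C_0^\infty(\W)$, hence for all $\phi\in H_0^1(\W)$ by density. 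Since $\partial\W\in C^3$ (in particular $C^{1,1}$), the $H^2$ regularity theory for the Dirichlet Laplacian yields $w\in H^2(\W)$ with $\e\|w\|_{H^2(\W)} \lss \|g\|_{L^2(\W)} \lss \|f\|_{L^2(\W)}$, the constant being independent of $\e$ (the lower-order contribution $\e\|w\|_{L^2(\W)} = \e\|\Delta u^\e\|_{L^2(\W)}$ is itself $\lss\|f\|_{L^2(\W)}$ by Theorem~\ref{thm:exist}).

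Finally, $u^\e$ is the weak solution of $-\Delta u^\e = -w$ in $\W$, $u^\e = 0$ on $\partial\W$. For \eqref{eqn3.30}, since $\partial\W\in C^3$ and $w\in H^1(\W)$, elliptic regularity gives $u^\e\in H^3(\W)$ with $\|u^\e\|_{H^3(\W)} \lss \|w\|_{H^1(\W)} + \|u^\e\|_{L^2(\W)}$; multiplying by $\e$ and using $\e\|w\|_{H^1(\W)} \le \e\|w\|_{H^2(\W)} \lss \|f\|_{L^2(\W)}$ together with $\e\|u^\e\|_{L^2(\W)} \le \e_0\|u^\e\|_{L^2(\W)} \lss \|f\|_{L^2(\W)}$ (the latter from Theorem~\ref{thm:exist} and the Poincar\'e inequality) yields \eqref{eqn3.30}. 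For \eqref{eqn3.30a}, when $\partial\W\in C^4$ the previous step still gives $w\in H^2(\W)$, and now elliptic regularity for the Poisson problem with right-hand side in $H^2(\W)$ and $\partial\W\in C^4$ produces $u^\e\in H^4(\W)$ with $\|u^\e\|_{H^4(\W)} \lss \|w\|_{H^2(\W)} + \|u^\e\|_{L^2(\W)}$; multiplying by $\e$ and absorbing as before gives \eqref{eqn3.30a}.

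The elliptic regularity inputs are entirely routine; the only points requiring care are the decoupling step --- verifying that $w = \Delta u^\e$ is a genuine $H_0^1(\W)$ weak solution of the Poisson problem, which is exactly where the condition $\Delta u^\e\in H_0^1(\W)$ built into the space $H$ and the density of $C_0^\infty(\W)$ in $H_0^1(\W)$ enter --- and bookkeeping to confirm that every elliptic-regularity constant is independent of $\e$, so that precisely one power of $\e$ survives on the left-hand side of \eqref{eqn3.30} and \eqref{eqn3.30a}.
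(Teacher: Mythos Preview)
Your proposal is correct and follows essentially the same approach as the paper: both rewrite \eqref{eqn1.4} as $\e\Delta^2 u^\e = f + A{:}D^2u^\e$ with an $L^2$ right-hand side (bounded via Theorem~\ref{thm:exist}) and then invoke higher-order elliptic regularity for the biharmonic problem with Navier conditions. The only difference is cosmetic: the paper cites the biharmonic regularity theory as a black box, whereas you explicitly carry out the standard Navier decoupling into two successive Dirichlet Poisson problems---which is precisely how that black box is proved.
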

\begin{proof}
Since $u^{\e}\in H^2(\W)\cap H_0^1(\W)$ and $A\in [L^{\infty}(\W)]^{d\times d}$, then $A:D^2u^\e+f\in L^2(\W)$ with $\| A:D^2u^\e+f \|_{L^2(\W)}\lss \|f\|_{L^2(\W)}$.  Moreover, since $u^\e\in H$ is weak solution to \eqref{eqn1.4}, we have
\begin{align*}
 \e(\Delta u^\e,\Delta v) = (A:D^2 u^\e+f,v) 
\end{align*}
Thus $u^{\e}$ is the weak solution to the biharmonic equation
\begin{align*}
\begin{split} 
\e\Delta^2u^\e &= A_0:D^2u^\e + f \qquad \text{ in } \W, \\ 
u^\e &= \Delta u^\e = 0 \qquad\qquad\ \text{ on } \partial\W,
\end{split}
\end{align*}
and the smoothness of $u^\e$ is entirely dependent on the smoothness of $\partial\W$,  and the desired 
estimates \eqref{eqn3.30}-\eqref{eqn3.30a} follow from the elliptic theory for the biharmonic equation. The proof is complete.
\end{proof}

We conclude this section with some error estimates of $u^\e-u$ in various norms.

\begin{theorem} \label{thm_eps_rates}
Let $\e<\e_0$, $u\in H^2(\W)\cap H_0^1(\W)$, and $u^\e\in H$ be the solutions to \eqref{eqn1.1} and \eqref{eqn1.4}, respectively.  Then we have the following error estimates: 
\begin{align} \label{eqn3.14}
\|\grad(u^\e-u) \|_{L^2(\W)} &\lss \sqrt{\e}\|f \|_{L^2(\W)} , \\
\|u^\e-u \|_{L^2(\W)} &\lss \sqrt{\e}\|f \|_{L^2(\W)} . \label{eqn3.15}
\end{align} 
\end{theorem}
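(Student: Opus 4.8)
The plan is to estimate the error $e:=u^\e-u$, which lies in $H^2(\W)\cap H_0^1(\W)$, by controlling it through the $H^1$-stability estimate \eqref{eqn3.12a} for the second-order operator $\mL$. A direct use of the $\mLe$-stability of Lemma~\ref{lem3.5} is unavailable, since $\Delta e$ need not vanish on $\partial\W$ and so $e\notin H$; instead one observes that the error equation exhibits $\mL e$ itself as an $H^{-1}(\W)$-functional of size $O(\e)$, the smallness coming from the single surviving biharmonic term $\e\,\Delta u^\e$, whose norm is controlled by $O(\e^{-1/2})\|f\|_{L^2(\W)}$ via \eqref{eqn3.11stab}.

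First I would write down the error equation. Since $u$ is a strong solution of \eqref{eqn1.1}, it satisfies $-(A:D^2u,v)=(f,v)$ for every $v\in L^2(\W)$, in particular for every $v\in H^2(\W)\cap H_0^1(\W)$; subtracting this from the weak formulation \eqref{eqn:weaksoln} of \eqref{eqn1.4} gives
\[
\e\bigl(\Delta u^\e,\Delta v\bigr)-\bigl(A:D^2e,v\bigr)=0\qquad\forall\,v\in H^2(\W)\cap H_0^1(\W),
\]
that is, $(\mL e,v)=-(A:D^2e,v)=-\e(\Delta u^\e,\Delta v)$. Because $u^\e\in H$, the Laplacian $\Delta u^\e$ belongs to $H_0^1(\W)$ and vanishes on $\partial\W$, so Green's identity removes the boundary term and yields $(\mL e,v)=\e(\grad\Delta u^\e,\grad v)$ for all $v\in H^2(\W)\cap H_0^1(\W)$. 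Since $\mL e=-A:D^2e\in L^2(\W)$ and $H^2(\W)\cap H_0^1(\W)$ is dense in $H_0^1(\W)$, this identity persists for every $v\in H_0^1(\W)$, and therefore
\[
\|\mL e\|_{H^{-1}(\W)}\le\e\,\|\grad\Delta u^\e\|_{L^2(\W)}.
\]

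Next, $e\in H^2(\W)\cap H_0^1(\W)$ is itself the strong solution of \eqref{eqn1.1} with right-hand side $\mL e\in L^2(\W)$, so estimate \eqref{eqn3.12a} applies to $e$ and gives $\|\grad e\|_{L^2(\W)}\lss\|\mL e\|_{H^{-1}(\W)}\le\e\,\|\grad\Delta u^\e\|_{L^2(\W)}$. Finally, the stability bound \eqref{eqn3.11stab} for $u^\e$ furnishes $\sqrt{\e}\,\|\grad\Delta u^\e\|_{L^2(\W)}\lss\|f\|_{L^2(\W)}$, hence $\e\,\|\grad\Delta u^\e\|_{L^2(\W)}\lss\sqrt{\e}\,\|f\|_{L^2(\W)}$. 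Combining the last two estimates proves \eqref{eqn3.14}, and \eqref{eqn3.15} is then immediate from the Poincar\'{e} inequality, since $e\in H_0^1(\W)$.

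The delicate step is the bound on $\|\mL e\|_{H^{-1}(\W)}$: one must notice that although $\mLe e$ is merely an element of $H^{-2}(\W)$, the function $\mL e$ is itself square-integrable and pairs with $H_0^1(\W)$ exactly as $\e(\grad\Delta u^\e,\grad\cdot)$, which is what makes the small factor $\e$ usable. This is precisely where the hypothesis $\partial\W\in C^{2,1}$ enters, since it is what guarantees $u^\e\in H$—so that $\grad\Delta u^\e\in L^2(\W)$ and the bound \eqref{eqn3.11stab} is available; as noted in the remark after Theorem~\ref{thm:exist}, under the weaker assumption $\partial\W\in C^{1,1}$ this argument fails and one loses the $H^1$ rate \eqref{eqn3.14}.
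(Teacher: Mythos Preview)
Your proof is correct and follows essentially the same route as the paper's: identify $\mL e$ with $-\e\Delta^2 u^\e\in H^{-1}(\W)$ via the error equation, bound its $H^{-1}$-norm by $\e\|\grad\Delta u^\e\|_{L^2(\W)}$ using $\Delta u^\e\in H_0^1(\W)$, apply the $H^1$-stability \eqref{eqn3.12a} of $\mL$, and finish with \eqref{eqn3.11stab} and Poincar\'e. Your write-up is in fact more explicit than the paper's about the integration-by-parts step and the density extension from $H^2(\W)\cap H_0^1(\W)$ to $H_0^1(\W)$, and your remarks on why $e\notin H$ and on the role of the $C^{2,1}$ boundary assumption are accurate and well placed.
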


\begin{proof}
Let $e^\e = u^\e - u$.  By the linearity of $\mL$ we get $\mL e^\e = \e\Delta^2u^\e\in H^{-1}(\W)$ since $u^\e\in H$.  Using \eqref{eqn3.12a} and \eqref{eqn3.11stab}, we have 
\begin{align*}
\begin{split}
\| \grad e^\e\|_{L^2(\W)} &\lss \e\| \Delta^2u^\e \|_{H^{-1}(\W)} \\
&= \sup_{v\in H_0^1(\W)}\frac{-\e(\grad\Delta u^\e,\grad v)}{\| \grad v\|_{L^2(\W)}} \\
&\leq \e \|\grad\Delta u^\e \|_{L^2(\W)} \\
&\lss \sqrt{\e} \|f \|_{L^2(\W)}, 
\end{split}
\end{align*}
which infers \eqref{eqn3.14}.  Poincar\'e's inequality yields \eqref{eqn3.15}.
The proof is complete.
\end{proof}

We note that the rates in Theorem~\ref{thm_eps_rates} may be suboptimal based on the 
numerical experiments provided below.  

%%%%%%
\section{$C^1$ Finite Element Methods for Second Order Elliptic Linear PDEs in Non-Divergence Form}\label{section4}

Since the highest order derivative of \eqref{eqn1.4} is in divergence form, we can easily and naturally discretize it using $C^1$ conforming finite element methods. 

\begin{defn} \label{defnFEmethod}
We define our $C^1$ finite element method for \eqref{eqn1.4} as seeking $u_h^\e\in V_h$ such that
\begin{align} \label{eqnc1fe}
\mAe(u_h^\e,v_h) = (f,v_h)  \qquad\forall v_h\in V_h , 
\end{align}
where
\[
	\mAe(w_h,v_h) := \e \bigl(  \Delta w_h, \Delta v_h \bigr) - \bigl( A:D^2w_h,v_h \bigr).
\]
\end{defn} 

\subsection{Well-posedness of $\mLhe$}

In this subsection, we show the well-posedness of Problem \eqref{defnFEmethod}.  We define the operator $\mLhe:V_h\to V_h$ by 
\[
	(\mLhe v_h,w_h) = \mA^\e(v_h,w_h) \qquad\forall w_h\in V_h.  
\]
We can also naturally extend the domain of $\mLhe$ to $H^2(\W)\cap H_0^1(\W)$.  

We assume that there is a unique weak solution $u^\e\in H^2(\W)\cap H_0^1(\W)$ of \eqref{eqn1.4} and that the operator $\mLe$ satisfies the Calderon-Zygmond estimate 
\begin{align} \label{eqnnewstab}
\|v\|_{H^2(\W)} \lss \|\mLe v\|_{L^2(\W)} . 
\end{align}
We note that Theorem \ref{thm:exist} implies this result provided $\partial\W\in C^{2,1}$ and $\e$ is sufficiently small.

Our goal is to derive a similar estimate to \eqref{eqnnewstab} for $\mLhe$, that is,
\begin{align}\label{eqnfecz}
\|v_h \|_{H^2(\W)} \lss \| \mLhe v_h \|_{L_h^2(\W)} \qquad \forall v_h\in V_h.
\end{align}
To this end, we will adapt the freezing coefficient technique as in Section \ref{subsection3.1} but
at the discrete level.  Section \ref{subsection-dis-constant} shows \eqref{eqnfecz} for constant coefficient $A$ which will be used in Section \ref{subsection-dis-continuous} to give us \eqref{eqnfecz}.  

\subsubsection{Stability Analysis of $\mLhe$ for Constant Coefficient $A$}\label{subsection-dis-constant}

Suppose that $A= A_0 \equiv A(x_0)$ for an $x_0\in \W$.  Since $A$ is constant, we 
have $A:D^2 v_h = \div(A\grad v_h)$ for all $v_h\in V_h$.   Thus, we can define $\mLzhe:V_h\to V_h$ by
\begin{align*}
\bigl(\mLzhe v_h,w_h\bigr) = \mAze(v_h,w_h) \qquad\forall w_h\in V_h,
\end{align*}  
where
\begin{align*}
\mAze(v_h,w_h) := \e\bigl(\Delta v_h,\Delta w_h\bigr) + \bigl(A\grad v_h,\grad w_h\bigr).
\end{align*}  

Since $\mAze(v_h,w_h)$ is clearly continuous and coercive on $V_h$ with the norm 
$$\|w_h\|_{E}^2 := \e\|\Delta w_h\|_{L^2(\W)}^2 + \|\grad w_h\|_{L^2(\W)}^2,$$
we have the existence and uniqueness of a solution $u_h^\e$ satisfying
\begin{align} \label{eqn4.0.1}
\bigl(\mLzhe u_h^\e,w_h\bigr) = (f,w_h) \qquad\forall w_h\in V_h 
\end{align} 
and the estimate
\begin{align} \label{eqn4.0.2}
\|u_h^\e-u^\e \|_{E} \lss h^{r-2}(\sqrt{\e}+h)\|u^\e\|_{H^s(\W)} , 
\end{align}
where $r=\min\{s,k+1\}$ and $u^\e\in H^s(\W)$ is the solution to
\begin{align} \label{eqn4.0.3}
	\mAze(u^\e,v) = (f,w) \quad\forall w\in H^2(\W)\cap H_0^1(\W).  
\end{align}
With all of the above in place, we can now show \eqref{eqnfecz} for $\mLzhe$.  

\begin{theorem}\label{thm:cc-caldzyg}
Suppose $\e=\mathcal{O}(h^\beta)$ with $\beta\geq 2$. Then there holds
\begin{align}\label{eqn:cc-caldzyg}
\| w_h \|_{H^2(\W)} \lss \|\mLzhe w_h\|_{L^2(\W)} \qquad\forall w_h\in V_h.
\end{align}
\end{theorem}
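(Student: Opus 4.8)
The plan is to mirror the continuous argument of Section~\ref{subsection3.1} at the discrete level, replacing the PDE-based estimates with their finite-element analogues. The backbone is: (i) an elementary energy estimate on $\mLzhe$ giving control of $\|\grad w_h\|_{L^2(\W)}$ and $\sqrt{\e}\|\Delta w_h\|_{L^2(\W)}$ in terms of $\|\mLzhe w_h\|_{L_h^2(\W)}$; (ii) a duality/comparison step that upgrades this to an $H^2$ bound. Since $A_0$ is constant, the pivotal identity $A_0:D^2 w_h = \div(A_0\grad w_h)$ holds, so $\mLzhe$ is symmetric, coercive in the energy norm $\|\cdot\|_E$, and its continuous counterpart $\mLze$ enjoys the Calderon-Zygmund estimate $\|v\|_{H^2(\W)}\lss\|\mLze v\|_{L^2(\W)}$ from Lemma~\ref{lem2.2} and Lemma~\ref{lem3.5} (specialized to constant $A_0$). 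The condition $\e=\mathcal{O}(h^\beta)$ with $\beta\geq 2$ is exactly what makes the consistency error $h^{r-2}(\sqrt\e+h)$ in \eqref{eqn4.0.2} of order $\mathcal{O}(h)$, i.e.\ higher order than the quantities we wish to bound, so the discrete solution inherits the stability of the continuous one.

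\smallskip
\noindent\textbf{Step 1 (Discrete Garding / energy estimate).} Given $w_h\in V_h$, set $f_h\in V_h$ to be the Riesz-type representative so that $(f_h,v_h)=(\mLzhe w_h,v_h)$ for all $v_h\in V_h$; then $\|f_h\|_{L^2(\W)}\le\|\mLzhe w_h\|_{L_h^2(\W)}$ (indeed, testing against $w_h$ itself and using coercivity of $\mAze$). Testing \eqref{eqn4.0.1}-type identity by $w_h$ and using the ellipticity \eqref{eqn1.2} exactly as in the proof of Lemma~\ref{lem2.1} yields
\begin{align*}
\e\|\Delta w_h\|_{L^2(\W)}^2 + \lambda\|\grad w_h\|_{L^2(\W)}^2 \le (\mLzhe w_h, w_h) \lss \|\mLzhe w_h\|_{L_h^2(\W)}\,\|\grad w_h\|_{L^2(\W)},
\end{align*}
and hence, after a Young's inequality,
$\sqrt{\e}\|\Delta w_h\|_{L^2(\W)} + \sqrt{\lambda}\|\grad w_h\|_{L^2(\W)} \lss \|\mLzhe w_h\|_{L_h^2(\W)}$.

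\smallskip
\noindent\textbf{Step 2 (Lift to $H^2$ via the continuous CZ estimate and \eqref{eqn4.0.2}).} Let $\phi^\e\in H^2(\W)\cap H_0^1(\W)$ solve the continuous problem $\mLze\phi^\e=f_h$ (well-posed, as remarked, for $\e<\e_0$), so by \eqref{eqnnewstab} applied to $\mLze$ we have $\|\phi^\e\|_{H^2(\W)}\lss\|f_h\|_{L^2(\W)}\le\|\mLzhe w_h\|_{L_h^2(\W)}$; moreover Theorem~\ref{thm_ue_reg} gives $\phi^\e\in H^3(\W)$ (using $\partial\W$ smooth enough). By construction $w_h$ is precisely the finite element approximation of $\phi^\e$ in the sense of \eqref{eqn4.0.3}, so \eqref{eqn4.0.2} yields $\|w_h-\phi^\e\|_E\lss h^{r-2}(\sqrt\e+h)\|\phi^\e\|_{H^s(\W)}$ with $s=3$, $r=3$, i.e.\ $\|w_h-\phi^\e\|_E\lss (\sqrt\e+h)\,h\,\|\phi^\e\|_{H^3(\W)}$. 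Combining an inverse inequality on the discrete error $w_h-I_h\phi^\e$ (to pass from the $E$-norm, which controls $\|\grad\cdot\|_{L^2}$, to $\|D^2\cdot\|_{L^2}$ at the cost of $h^{-1}$) with standard interpolation $\|\phi^\e - I_h\phi^\e\|_{H^2(\W)}\lss h\|\phi^\e\|_{H^3(\W)}$, we obtain
\begin{align*}
\|w_h\|_{H^2(\W)} \le \|w_h - I_h\phi^\e\|_{H^2(\W)} + \|I_h\phi^\e - \phi^\e\|_{H^2(\W)} + \|\phi^\e\|_{H^2(\W)} \lss (1 + \text{(higher order in }h)) \,\|\mLzhe w_h\|_{L_h^2(\W)},
\end{align*}
where the condition $\beta\ge 2$ guarantees $h^{-1}(\sqrt\e+h)h = \sqrt\e h^{-1}\cdot h + h \lss h^{\beta/2} + h \to 0$, so the extra terms are absorbed. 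This gives \eqref{eqn:cc-caldzyg}.

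\smallskip
\noindent\textbf{Main obstacle.} The delicate point is the interplay between the inverse inequality (which injects a factor $h^{-1}$ when moving from the energy norm to the full $H^2$ norm) and the consistency error in \eqref{eqn4.0.2} (which must be small enough to kill that $h^{-1}$). This is precisely why the scaling $\e=\mathcal{O}(h^\beta)$, $\beta\ge 2$, appears in the hypothesis: it ensures $\sqrt{\e}/h \lss h^{\beta/2-1}$ stays bounded (and $\to 0$ when $\beta>2$), so the stability constant does not blow up. A careful bookkeeping of which norm ($L_h^2$, $L^2$, or $H^{-2}_h$) appears on the right, and making sure the duality representative $f_h$ is bounded in $L^2(\W)$ rather than just in a discrete dual norm, is the other place where one must be precise; here the coercivity of $\mAze$ and the trivial inequality $\|v_h\|_{L^2(\W)}\le\|v_h\|_{L_h^2(\W)}$ do the job, but the argument must be arranged so that no uncontrolled $h^{-1}$ sneaks in. A secondary technical issue is justifying that $\phi^\e$ has enough regularity ($H^3$) uniformly so that the interpolation estimates apply with $s=3$; this follows from Theorem~\ref{thm_ue_reg} once $\partial\W\in C^3$, which should be added as a standing assumption here (or $k\ge 3$ suffices for the interpolation order needed).
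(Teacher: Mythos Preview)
Your overall strategy matches the paper's: set $f_h=\mLzhe w_h$, let $\phi^\e$ solve the continuous problem $\mLze\phi^\e=f_h$, use the continuous Calderon--Zygmund bound $\|\phi^\e\|_{H^2(\W)}\lss\|f_h\|_{L^2(\W)}$, invoke the energy-norm error estimate \eqref{eqn4.0.2}, and recover $H^2$ control of $w_h$ via an inverse inequality. Step~1 (the coercivity estimate) is correct but is not actually used in the remainder, and the paper omits it.

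There is, however, a genuine gap in Step~2. You appeal to Theorem~\ref{thm_ue_reg} to obtain $\phi^\e\in H^3(\W)$ ``uniformly,'' but that theorem only gives $\e\|\phi^\e\|_{H^3(\W)}\lss\|f_h\|_{L^2(\W)}$, i.e.\ $\|\phi^\e\|_{H^3(\W)}\lss\e^{-1}\|f_h\|_{L^2(\W)}$. When you combine the inverse inequality with \eqref{eqn4.0.2} at $s=3$ and the interpolation bound $\|\phi^\e-I_h\phi^\e\|_{H^2(\W)}\lss h\|\phi^\e\|_{H^3(\W)}$, every term you produce carries a factor of $\|\phi^\e\|_{H^3(\W)}$, and the resulting bound is of order $(\sqrt\e+h)\e^{-1}\|f_h\|_{L^2(\W)}$. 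With $\e=\mathcal O(h^\beta)$, $\beta\ge2$, this is at best $\e^{-1/2}\sim h^{-\beta/2}\to\infty$, so the constant blows up and the argument fails. Your ``Main obstacle'' paragraph correctly identifies the $h^{-1}$ from the inverse inequality as the danger, but misdiagnoses the cure: the scaling $\beta\ge2$ tames $\sqrt\e/h$ only when paired with a \emph{uniform} regularity norm, and no uniform $H^3$ bound on $\phi^\e$ is available.

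The paper sidesteps this by never invoking $H^3$ regularity. It uses \eqref{eqn4.0.2} with $s=2$ (giving $\|\grad(w^\e-w_h)\|_{L^2(\W)}\lss(\sqrt\e+h)\|w^\e\|_{H^2(\W)}\lss h\|f_h\|_{L^2(\W)}$) and then, crucially, interpolates with the $P_1$ (not $C^1$) nodal interpolant $I_h^1$ and measures the difference in the \emph{piecewise} $H^2$ norm $\|\cdot\|_{H^2(\mathcal T_h)}$. Since $I_h^1 w^\e$ has vanishing elementwise Hessian, $\|w^\e-I_h^1 w^\e\|_{H^2(\mathcal T_h)}\lss\|w^\e\|_{H^2(\W)}$ holds with only $H^2$ data, and the inverse inequality on the piecewise polynomial $I_h^1 w^\e-w_h$ then closes the argument. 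Replacing your $I_h\phi^\e$ by $I_h^1\phi^\e$ and working elementwise would repair your proof.
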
  

\begin{proof}
We first note that \eqref{eqn:cc-caldzyg} is equivalent to 
\begin{align} \label{eqn4.0.0.3}
\|(\mLzhe)^{-1}\phi_h \|_{H^2(\W)} \lss \|\phi_h\|_{L^2(\W)} \qquad\forall \phi_h\in V_h.
\end{align}
Fix $\phi_h\in V_h$, and let $w^\e$ be the weak solution to \eqref{eqn4.0.3} with $f=\phi_h$ and $w_h=(\mLzhe)^{-1}\phi_h\in V_h$.  Since $w^\e$ is a weak solution of \eqref{eqn4.0.3}, we have $w^\e\in H^2(\W)\cap H_0^1(\W)$ with 
\begin{align} \label{eqn4.0.4}
	\| w^\e \|_{H^2(\W)} \lss \|\phi_h\|_{L^2(\W)}
\end{align}
by \eqref{eqnnewstab}.   Since $w_h^\e$ satisfies
\[
	\mAze(w_h^\e,v_h) = (\phi_h,v_h) \qquad\forall v_h\in V_h , 
\]
we can apply \eqref{eqn4.0.2} to get  
\begin{align} \label{eqn4.0.5}
\|\grad (w^\e-w_h^\e)\|_{L^2(\W)} \leq \| w^\e-w_h^\e\|_{E} \lss (\sqrt{\e}+h)\|w^\e\|_{H^2(\W)} \lss h\|\phi_h\|_{L^2(\W)} . 
\end{align}  
Let $I_h^1$ denote the $P_1$ continuous finite element interpolation operator on $\mathcal{T}_h$. 
By the finite element interpolation theory, we have
\begin{align} \label{eqn4.0.6}
\|\grad (w^\e-I_h^1 w^\e)\|_{L^2(\W)} \lss h\|w^\e\|_{H^2(\W)} \lss h\|\phi_h\|_{L^2(\W)} , 
\end{align}  

Let $\|\cdot\|_{H^2(\mathcal{T}_h)}$ denote the piecewise $H^2$-norm on $\mathcal{T}_h$. From \eqref{eqn4.0.5}, \eqref{eqn4.0.6}, the triangle inequality, and an inverse inequality, we have
\begin{align*}
\|w^\e-w_h^\e\|_{H^2(\W)} &\lss \|w^\e-I_h^1 w^\e\|_{H^2(\mathcal{T}_h)} + \|I_h^1 w^\e-w_h^\e\|_{H^2(\mathcal{T}_h)} \\
& \lss \|w^\e\|_{H^2(\W)} + \frac{1}{h} \|\grad(I_h^1 w^\e-w_h^\e)\|_{L^2(\W)} \\
& \lss \|\phi_h\|_{L^2(\W)} + \frac{1}{h}\|\grad(I_h^1 w^\e-w^\e)\|_{L^2(\W)} 
+ \frac{1}{h}\|\grad(w^\e-w_h^\e)\|_{L^2(\W)} \\
%& \lss \|\phi_h\|_{L^2(\W)} + \|\varphi_h\|_{L^2(\W)} + \|\varphi_h\|_{L^2(\W)} \\
& \lss \|\phi_h\|_{L^2(\W)}.
\end{align*}
Hence,
\begin{align*}
	\|w_h^\e\|_{H^2(\W)} \leq \|w^\e-w_h^\e\|_{H^2(\W)} + \|w^\e\|_{H^2(\W)}  \lss \|\phi_h\|_{L^2(\W)}.
\end{align*}
Since $w_h^\e = (\mLzhe)^{-1}\phi_h$, we have \eqref{eqn4.0.2}.  The proof is complete.  
\end{proof}

\subsubsection{Stability Analysis of $\mLhe$ for Continuous Coefficient $A$}
\label{subsection-dis-continuous}

We need some estimates before continuing with the freezing coefficient argument.  The first is 
to show $\mLhe$ is bounded independent of $h$ which is a result of the following lemma.  

\begin{lemma} \label{lemfe1}
Let $\e=\mathcal{O}(h^\beta)$ for $\beta\geq 2$ and $D\subseteq\W$ be a subdomain.  
Then, for any $v\in H^2(D)\cap H_0^1(D)$, there holds 
\begin{align}\label{eqn4.1}
\|\mLhe v\|_{L_h^2(D)} \lss \|v\|_{H^2(D)}.
\end{align}
\end{lemma}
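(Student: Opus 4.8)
The plan is to estimate the pairing $(\mLhe v, w_h)_D$ directly for an arbitrary nonzero test function $w_h \in V_h(D)$ and then pass to the supremum defining $\|\cdot\|_{L_h^2(D)}$. First I would note that every $w_h \in V_h(D)$ vanishes on $\W\setminus D$ (and, being $C^1$, so does $\grad w_h$, hence $\Delta w_h = 0$ a.e.\ outside $D$), so the definition of $\mLhe$ gives
\[
(\mLhe v, w_h)_D = \e\bigl(\Delta v, \Delta w_h\bigr)_D - \bigl(A:D^2 v, w_h\bigr)_D ,
\]
with all inner products legitimately taken over $D$ alone.

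Next I would bound the two terms separately. For the lower-order term, Cauchy--Schwarz together with the boundedness of $A \in \bigl[C(\overline{\W})\bigr]^{n\times n}$ yields $|(A:D^2 v, w_h)_D| \lss \|D^2 v\|_{L^2(D)}\,\|w_h\|_{L^2(D)} \le \|v\|_{H^2(D)}\,\|w_h\|_{L^2(D)}$. For the biharmonic term, Cauchy--Schwarz followed by an inverse inequality on the quasi-uniform mesh $\Th$, namely $\|\Delta w_h\|_{L^2(D)} \lss \|w_h\|_{H^2(D)} \lss h^{-2}\|w_h\|_{L^2(D)}$, gives $\e\,|(\Delta v, \Delta w_h)_D| \lss \e h^{-2}\,\|v\|_{H^2(D)}\,\|w_h\|_{L^2(D)}$. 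Invoking the hypothesis $\e = \mathcal{O}(h^\beta)$ with $\beta \ge 2$, we have $\e h^{-2} = \mathcal{O}(h^{\beta-2}) = \mathcal{O}(1)$, so the biharmonic contribution is also controlled by $\|v\|_{H^2(D)}\,\|w_h\|_{L^2(D)}$ with a constant independent of $h$. Adding the two bounds, dividing by $\|w_h\|_{L^2(D)}$, and taking the supremum over $w_h \in V_h(D)\setminus\{0\}$ produces \eqref{eqn4.1}.

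There is no serious obstacle here; the only point demanding care is that the constants in the inverse estimate and in $\|A\|_{L^\infty(\W)}$ be independent of $h$ and of the subdomain $D$. The former holds because the inverse inequality is elementwise (summed over the elements meeting $D$) and $\Th$ is quasi-uniform, while the latter holds because $A$ is continuous on the compact set $\overline{\W}$. It is precisely the scaling restriction $\beta \ge 2$ that keeps $\e h^{-2}$ bounded; without it the biharmonic term would not be dominated by $\|v\|_{H^2(D)}$ and the estimate \eqref{eqn4.1} would fail in this form.
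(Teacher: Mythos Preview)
Your proposal is correct and follows essentially the same approach as the paper: write out $(\mLhe v, w_h)$, bound the second-order term by H\"older/Cauchy--Schwarz with $\|A\|_{L^\infty}$, bound the biharmonic term via Cauchy--Schwarz and the inverse inequality $\|w_h\|_{H^2(D)}\lss h^{-2}\|w_h\|_{L^2(D)}$, then use $\e h^{-2}=\mathcal{O}(h^{\beta-2})\lss 1$ and divide by $\|w_h\|_{L^2(D)}$. Your version simply adds a bit more justification (restricting the integrals to $D$, independence of constants), but the argument is the same.
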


\begin{proof}
Let $w_h\in V_h(D)$.  By H\"older's inequality and an inverse inequality we have
\begin{align}\label{eqn4.1.1}
\bigl(\mLhe v,w_h\bigr) &= \mAe(v,w_h) = \e(\Delta v,\Delta w_h) -(A:D^2v,w_h) \\
&\lss h^\beta\|v\|_{H^2(D)}\|w_h\|_{H^2(D)} + \|A\|_{L^{\infty}(D)}\|v\|_{H^2(D)}\|w_h\|_{L^2(D)} \nonumber \\
&\lss h^{\beta-2}\|v\|_{H^2(D)}\|w_h\|_{L^2(D)} + \|v\|_{H^2(D)}\|w_h\|_{L^2(D)} \nonumber \\
&\lss \|v\|_{H^2(D)}\|w_h\|_{L^2(D)} \nonumber.
\end{align}
Diving both sides of \eqref{eqn4.1.1} by $\|w_h\|_{L^2(D)}$ yields \eqref{eqn4.1}.  The proof is complete.  
\end{proof}

Next we need to show that $\mLzhe$ and $\mLhe$ are locally close to each other.  

\begin{lemma} \label{lemfe2}
Let $x_0\in \W$ and let $\d>0$.  Then there exists $R_{\d}>0$ and $h_{\d} >0$  such that  
\begin{align}\label{eqn4.2}
\|(\mLzhe-\mLhe) v_h\|_{L_h^2(B_{R_{\d}})} \lss \d\|v_h\|_{H^2(B_{R_{\d}})} \quad\forall v_h\in V_h(B_{R_{\d}}),
\end{align}
where $B_{R_{\d}}$ is the ball of radius ${R_{\d}}$ centered at $x_0$ and $h<h_{\d}$.  
\end{lemma}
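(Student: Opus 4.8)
The plan is to mimic the continuous estimate from Lemma~\ref{lem3.1}, where the key was that $(\mLze - \mLe)v = (A - A_0):D^2 v$ and $A$ is continuous, so $\|A - A_0\|_{L^\infty(B_R)}$ is small for $R$ small. At the discrete level, fix $x_0$ and $\d > 0$. Since $A \in [C(\overline{\W})]^{n \times n}$, choose $R_\d > 0$ so that $\|A - A_0\|_{L^\infty(B_{R_\d})} \leq \d$, exactly as in the continuous case; this choice does not depend on $h$. Here $A_0 = A(x_0)$.

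Next I would unravel the discrete $L_h^2$ norm. For $v_h \in V_h(B_{R_\d})$ and any test function $w_h \in V_h(B_{R_\d})$, compute
\begin{align*}
\bigl((\mLzhe - \mLhe)v_h, w_h\bigr) &= \mAze(v_h, w_h) - \mAe(v_h, w_h) \\
&= \bigl(A_0 \grad v_h, \grad w_h\bigr) + \bigl(A:D^2 v_h, w_h\bigr).
\end{align*}
The issue is that the $\e(\Delta v_h, \Delta w_h)$ terms cancel, leaving the first-order-looking term $(A_0\grad v_h,\grad w_h)$ against the second-order term $(A:D^2 v_h, w_h)$; these are not in the same form. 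The trick, just as in the proof of Lemma~\ref{lem2.2} and in \eqref{eqn2.6}, is that for the constant matrix $A_0$ one has $A_0 : D^2 v_h = \div(A_0 \grad v_h)$ as an $L^2$ function (note $v_h \in V_h \subset C^1$, so $D^2 v_h$ is piecewise defined but $A_0 : D^2 v_h$ need not be integrated by parts since it is genuinely $L^2$; alternatively integrate by parts elementwise using that $\grad v_h$ is continuous across interelement boundaries and $w_h$ vanishes on $\partial B_{R_\d}$). Thus $(A_0 \grad v_h, \grad w_h) = -(A_0 : D^2 v_h, w_h)$, and the difference collapses to
\[
\bigl((\mLzhe - \mLhe)v_h, w_h\bigr) = \bigl((A - A_0):D^2 v_h, w_h\bigr).
\]

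From here the estimate is routine: by H\"older's inequality,
\[
\bigl((A - A_0):D^2 v_h, w_h\bigr) \leq \|A - A_0\|_{L^\infty(B_{R_\d})} \|D^2 v_h\|_{L^2(B_{R_\d})} \|w_h\|_{L^2(B_{R_\d})} \leq \d \|v_h\|_{H^2(B_{R_\d})} \|w_h\|_{L^2(B_{R_\d})}.
\]
Dividing by $\|w_h\|_{L^2(B_{R_\d})}$ and taking the supremum over $w_h \in V_h(B_{R_\d})$ gives \eqref{eqn4.2}; the threshold $h_\d$ can be taken as $\frac13 \mathrm{diam}(B_{R_\d})$ to ensure $V_h(B_{R_\d})$ is non-trivial (cf.\ the remark after the definition of $V_h(D)$). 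The main obstacle — and the only place requiring care — is justifying the identity $(A_0\grad v_h, \grad w_h) = -(A_0:D^2 v_h, w_h)$ for discrete functions: one must argue elementwise, using the $C^1$ regularity of $v_h$ so that no jump terms in $\grad v_h$ appear across element interfaces, plus the fact that $w_h$ vanishes on $\partial B_{R_\d}$ since $w_h \in V_h(B_{R_\d})$. I expect the proof to be only a few lines once this point is handled.
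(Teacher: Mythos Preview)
Your proof is correct and follows essentially the same route as the paper: choose $R_\d$ by uniform continuity of $A$, take $h_\d$ so that $V_h(B_{R_\d})$ is nontrivial, reduce $(\mLzhe-\mLhe)v_h$ to $(A_0-A):D^2 v_h$, and apply H\"older. The ``main obstacle'' you flag is in fact no obstacle at all: since $V_h\subset H^2(\W)\cap H_0^1(\W)$ by definition, the identity $(A_0\grad v_h,\grad w_h)=-(A_0{:}D^2 v_h,w_h)$ is just the usual integration by parts for $H^2$ functions against $H_0^1$ test functions, with no need for an elementwise argument.
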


\begin{proof}
Since $A$ is continuous and $\overline\W$ is compact, it is uniformly continuous on $\overline\W$.  
Thus, given $\d>0$, there exists $R_{\d}>0$ such that 
\begin{align} \label{eqn4.2.1}
\| A_0- A \|_{L^{\infty}(B_{R_{\d}})} \leq \delta.
\end{align}
Choose $h_{\d}$ such that $V_h(B_{R_{\d}})$ is nontrivial.  Let $w_h\in V_h(B_{R_{\d}})$ with $w_h\neq 0$.  Then \begin{align*}
\left((\mLzhe-\mLhe) v_h,w_h\right) &= \left((A_0-A):D^2 v_h,w_h\right) \\
&\lss \|A_0-A\|_{L^{\infty}(B_{R_{\d}})}\|v_h\|_{H^2(B_{R_{\d}})}\|w_h\|_{L^2(B_{R_{\d}})}.
\end{align*}
Dividing by $\|w_h\|_{L^2(B)}$ gives us \eqref{eqn4.2}.  The proof is complete. 
\end{proof}

Lastly, we state some super approximation results similar to those given in \cite{Feng2017SS}.  Since the proof is almost the same by verifying the estimates on each element, we omit it.

\begin{lemma}\label{lemfesuperapprox}
Let $I_h$ be a standard $C^1$ (e.g. Argyris) nodal finite element interpolation operator (cf. \cite{Sp:BS}) and 
 $\eta\in C^{\infty}(\W)$ with $\|\eta\|_{W^{j,\infty}(\W)}\lss d^{-j}$ for $0\leq j\leq k$ and some $d>0$.  Then, for any subdomain $D\subseteq\W$, we have
\begin{align*}
\|\eta v_h - I_h(\eta v_h) \|_{L^2(D)} &\lss \frac{h}{d}\|v_h\|_{L^2(D)} \qquad\forall v_h\in V_h,\\
\|\eta v_h - I_h(\eta v_h) \|_{H^2(D)} &\lss \frac{h}{d^3}\|v_h\|_{H^2(D)} \qquad\forall v_h\in V_h.
\end{align*}
\end{lemma}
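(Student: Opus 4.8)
\textbf{Proof proposal for Lemma~\ref{lemfesuperapprox}.}
The plan is to reduce the estimate to an element-by-element computation on the mesh $\Th$, exploiting the locality of the $C^1$ nodal interpolant $I_h$. First I would fix an element $T\in\Th$ and recall that, for a standard $C^1$ finite element (e.g. Argyris), the local interpolant $I_h w\big|_T$ depends only on the finite number of degrees of freedom of $w$ supported on $\overline T$ (nodal values of $w$, its first derivatives, and, for Argyris, second derivatives and normal derivatives at edge midpoints). The key observation is that $I_h$ preserves $\mathbb{P}_k(T)$: since $v_h\big|_T$ is a polynomial of degree $\le k$ and $\eta$ is smooth, a Bramble--Hilbert / Taylor-expansion argument shows that $\eta v_h - I_h(\eta v_h)$ is controlled on $T$ by the $(k+1)$-st order derivatives of $\eta v_h$ on $T$, scaled by appropriate powers of $h_T$.

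Next I would expand $D^{k+1}(\eta v_h)$ by the Leibniz rule. Because $v_h\big|_T\in\mathbb{P}_k(T)$, every term in this expansion must hit $\eta$ with at least one derivative; more precisely $D^{k+1}(\eta v_h)=\sum_{j=1}^{k+1}\binom{k+1}{j}D^j\eta\,D^{k+1-j}v_h$, and the hypothesis $\|\eta\|_{W^{j,\infty}}\lesssim d^{-j}$ gives $\|D^j\eta\|_{L^\infty(T)}\lesssim d^{-j}$. Combining this with an inverse inequality $\|D^{k+1-j}v_h\|_{L^2(T)}\lesssim h^{-(k-1-j)}\|D^2 v_h\|_{L^2(T)}$ (respectively $\lesssim h^{-(k+1-j)}\|v_h\|_{L^2(T)}$ for the $L^2$ bound) and the standard scaling $\|\eta v_h - I_h(\eta v_h)\|_{H^m(T)}\lesssim h_T^{k+1-m}\|D^{k+1}(\eta v_h)\|_{L^2(T)}$, one sees that the worst term is $j=1$: it produces a factor $d^{-1}h^{k-1}\cdot h^{\,(\text{scaling})}$ which, for $m=0$, leaves $h/d$, and for $m=2$, leaves $h/d^3$ (the extra $d^{-2}$ coming from the fact that for $m=2$ one must compare against $\|v_h\|_{H^2}$ rather than $\|v_h\|_{L^2}$, so lower-order derivatives of $v_h$ must be reabsorbed using $\|D^\ell v_h\|\lesssim h^{2-\ell}\|D^2 v_h\|$ only down to $\ell\ge 0$, and the mismatch is absorbed into powers of $d$ via $h\le d$). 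Finally I would sum the squared local estimates over all $T$ with $T\cap D\neq\emptyset$; since the interpolation is local and the mesh is shape-regular, the sum telescopes to the global bound over $D$, yielding the two claimed inequalities.

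The main obstacle I expect is bookkeeping the powers of $h$ and $d$ correctly in the $H^2$ estimate so that exactly $h/d^3$ (and not, say, $h/d^2$ or $h^2/d^3$) emerges; this requires being careful that one is always allowed to trade a derivative on $v_h$ for a factor $h^{-1}$ via the inverse inequality while simultaneously trading a derivative on $\eta$ for a factor $d^{-1}$, and that the implicit assumption $h\lesssim d$ (needed for these to be genuine bounds rather than the reverse) is in force — it is, since otherwise the estimate is vacuous. Because the statement explicitly says the proof ``is almost the same'' as the one in \cite{Feng2017SS}, verifying these scalings on the reference element and invoking affine equivalence is the cleanest route, and I would simply cite that argument rather than reproduce it in full.
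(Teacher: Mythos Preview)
Your proposal is correct and matches the paper's approach exactly: the paper omits the proof, stating only that it ``is almost the same [as in \cite{Feng2017SS}] by verifying the estimates on each element,'' and your outline---localize to $T\in\Th$, use that $I_h$ preserves $\mathbb{P}_k(T)$ so the Bramble--Hilbert bound involves $D^{k+1}(\eta v_h)$, expand by Leibniz (the $j=0$ term vanishes since $v_h|_T\in\mathbb{P}_k$), bound $D^j\eta$ by $d^{-j}$ and $D^{k+1-j}v_h$ by an inverse inequality, then sum over elements---is precisely that element-by-element verification. Your caveat about the $h/d^3$ bookkeeping is well placed: the sharp term-by-term count actually gives $(h/d)^j$ for $j\le k-1$, and the stated $h/d^3$ is simply a convenient (cruder) upper bound valid once $h\lesssim d\lesssim 1$, which suffices for the covering argument in Lemma~\ref{lemfe3}.
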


We also cite the following inverse inequality from \cite{Feng2017SS}.

\begin{lemma}\label{leminverseineq}
Let $v_h\in V_h(D)$ where $D\subseteq\W$ is a subdomain.  Then we have
\begin{align*}
\|v_h\|_{L^2(D)} \lss h^{-1}\|v_h\|_{H^{-1}(D)}.
\end{align*}
\end{lemma}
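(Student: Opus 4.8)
The plan is to combine a simple duality argument with the standard finite element inverse inequality. First I would record the structural fact that $V_h(D)\subset H_0^1(D)$: every $v_h\in V_h(D)$ is $C^1$ on $\overline{\W}$ and vanishes identically on $\W\setminus D$, so $v_h$ together with $\grad v_h$ vanishes on $\partial D$, whence $v_h\in H_0^1(D)$ (in fact $H_0^2(D)$). In particular $v_h$ itself is an admissible test function in the supremum defining $\|v_h\|_{H^{-1}(D)}$.

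The second step is to test the $H^{-1}(D)$-norm against $v_h$. If $v_h\equiv 0$ there is nothing to prove, so assume $v_h\not\equiv 0$; then $\|\grad v_h\|_{L^2(D)}>0$, and by the definition of the $H^{-1}(D)$-norm,
\[
\|v_h\|_{H^{-1}(D)} \;=\; \sup_{w\in H_0^1(D)\setminus\{0\}}\frac{(v_h,w)_D}{\|\grad w\|_{L^2(D)}} \;\geq\; \frac{(v_h,v_h)_D}{\|\grad v_h\|_{L^2(D)}} \;=\; \frac{\|v_h\|_{L^2(D)}^2}{\|\grad v_h\|_{L^2(D)}} .
\]
Rearranging gives $\|v_h\|_{L^2(D)}^2 \leq \|v_h\|_{H^{-1}(D)}\,\|\grad v_h\|_{L^2(D)}$.

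The third and final step is to apply the standard inverse inequality $\|\grad v_h\|_{L^2(D)} \lss h^{-1}\|v_h\|_{L^2(D)}$, which holds because $\Th$ is quasi-uniform and shape-regular: it follows from the elementwise estimate on each $T\in\Th$ (with constant depending only on $k$ and the shape-regularity constant) after summing over the elements meeting $D$. Substituting this bound into the previous inequality yields $\|v_h\|_{L^2(D)}^2 \lss h^{-1}\|v_h\|_{H^{-1}(D)}\,\|v_h\|_{L^2(D)}$, and cancelling one factor of $\|v_h\|_{L^2(D)}$ gives the claim. There is no substantial obstacle here; the only points deserving a line of justification are the inclusion $V_h(D)\subset H_0^1(D)$, which relies on the $C^1$-conformity built into the definition of $V_h$, and the global inverse inequality, which relies on the quasi-uniformity of the mesh.
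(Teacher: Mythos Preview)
Your argument is correct. The paper does not supply its own proof of this lemma; it simply cites the result from \cite{Feng2017SS}. Your self-contained proof---test the $H^{-1}(D)$-norm with $v_h$ itself (admissible because $V_h(D)\subset H_0^1(D)$) and then invoke the standard inverse inequality---is exactly the natural route and is in fact how the cited reference proves it as well.

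One small clarification worth making explicit in Step~3: since $v_h$ vanishes on $\W\setminus D$, you have $\|\grad v_h\|_{L^2(D)}=\|\grad v_h\|_{L^2(\W)}$ and $\|v_h\|_{L^2(D)}=\|v_h\|_{L^2(\W)}$, so the global inverse inequality on the full mesh $\Th$ applies directly without worrying about elements that straddle $\partial D$. Also, the parenthetical claim that $v_h\in H_0^2(D)$ is unnecessary for the argument (and would require a word about $\grad v_h$ vanishing on $\partial D$); you only use $v_h\in H_0^1(D)$, which follows immediately from continuity and the zero-extension property.
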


To follow the freezing coefficient technique from here, we could employ arguments similar
 to the likes of Section  \ref{section3.2.2} to achieve a G\r{a}rding-type estimate similar to \eqref{eqn3.11}; namely, 
\begin{align}
 \| v_h \|_{H^2(\W)} \lss \| \mLhe v_h \|_{L_h^2(\W)} + \|v_h\|_{L^2(\W)}, 
\end{align}
which is almost \eqref{eqnfecz} aside from the $L^2$ norm of $v_h$ on the right hand side.  To strip this term off in the PDE theory, we used the fact that $\mLe$ was injective.  However, we do not have the tools available at the discrete level to say the same for $\mLhe$.  To overcome this difficulty, our idea is to utilize  a duality argument in order to achieve \eqref{eqnfecz}.  Unfortunately, a standard duality argument would require an a-priori stability estimate of $(\mLe)^*$, the adjoint of $\mLe$, which is unknown at this time.  Instead, we apply the freezing coefficient technique to $ (\mLhe)^*$, the adjoint of $\mLhe$, for two reasons.  First, since $\mLhe$ lives on a finite dimensional vector space, the invertibility of $\mLhe$ is equivalent to the invertibility of $(\mLhe)^*$.  Second, a duality argument applied to $ (\mLhe)^*$ uses the stability of $\mLe$, which we have already proved in Theorem \ref{thm:exist}. 

Define $\mLhes:V_h\to V_h$ by 
\[
\bigl(\mLhes w_h,v_h\bigl) = (\mLhe v_h,w_h) \quad\forall v_h\in V_h.   
\]
The entirety of the non-standard duality argument is given in the lemma below.

\begin{lemma} \label{lemfe3}
There exists $h_0>0$ such that for all $h<h_0$ and $\e=\mathcal{O}(h^{\beta})$ for some $\beta\geq 2$, 
\begin{align}\label{eqn4.3}
\|v_h\|_{L^2(\W)} \lss \|\mLhes v_h\|_{H_h^{-2}(\W)}  \qquad \forall v_h\in V_h.
\end{align}
\end{lemma}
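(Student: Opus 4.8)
The plan is to run a duality argument for $\mLhes$. Given $v_h \in V_h$, I would let $\psi^\e \in H$ be the weak solution of the (forward) continuous problem $\mLe \psi^\e = v_h$ — which exists, is unique, and satisfies $\|\psi^\e\|_{H^2(\W)} \lss \|v_h\|_{L^2(\W)}$ by Theorem~\ref{thm:exist} and \eqref{eqnnewstab}. Then I would write
\begin{align*}
\|v_h\|_{L^2(\W)}^2 = (\mLe \psi^\e, v_h) = \e(\Delta \psi^\e, \Delta v_h) - (A:D^2 \psi^\e, v_h).
\end{align*}
The goal is to replace $\psi^\e$ by its $C^1$ interpolant $I_h \psi^\e \in V_h$ so that the right-hand side becomes $\mAe(I_h\psi^\e, v_h) = (\mLhes v_h, I_h \psi^\e)$ up to errors controlled by the interpolation estimates (standard $C^1$ finite element interpolation, with $d = 1$ in Lemma~\ref{lemfesuperapprox}, or the classical estimates for $I_h$). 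Bounding $(\mLhes v_h, I_h\psi^\e)$ by $\|\mLhes v_h\|_{H_h^{-2}(\W)}\|I_h\psi^\e\|_{H^2(\W)} \lss \|\mLhes v_h\|_{H_h^{-2}(\W)}\|\psi^\e\|_{H^2(\W)} \lss \|\mLhes v_h\|_{H_h^{-2}(\W)}\|v_h\|_{L^2(\W)}$ and dividing by $\|v_h\|_{L^2(\W)}$ would then give \eqref{eqn4.3}.

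The key steps, in order, are: (1) solve the adjoint/dual continuous problem and invoke the Calderon–Zygmund stability \eqref{eqnnewstab} to get $\|\psi^\e\|_{H^2(\W)}\lss\|v_h\|_{L^2(\W)}$; (2) test with $v_h$ and expand $\|v_h\|_{L^2(\W)}^2$ as above; (3) insert $\mAe(I_h\psi^\e - \psi^\e, v_h)$ and split, so that $\|v_h\|_{L^2(\W)}^2 = (\mLhes v_h, I_h\psi^\e) + \mAe(\psi^\e - I_h\psi^\e, v_h)$; (4) estimate the consistency error $\mAe(\psi^\e - I_h\psi^\e, v_h) = \e(\Delta(\psi^\e - I_h\psi^\e),\Delta v_h) - (A:D^2(\psi^\e - I_h\psi^\e), v_h)$. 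For the second term I would integrate by parts back onto $v_h$, or more simply bound $\|A:D^2(\psi^\e-I_h\psi^\e)\|_{H_h^{-2}(\W)}$ against $\|\psi^\e - I_h\psi^\e\|_{L^2(\W)}$-type quantities — here a duality/negative-norm interpolation estimate is needed so that the loss of two derivatives on $v_h$ is compensated; (5) use $\e = \mathcal{O}(h^\beta)$ with $\beta \geq 2$ together with an inverse inequality on $\Delta v_h$ to absorb the $\e$-term, i.e. $\e\|\Delta(\psi^\e - I_h\psi^\e)\|_{L^2}\|\Delta v_h\|_{L^2} \lss h^\beta \cdot h\|\psi^\e\|_{H^2}\cdot h^{-2}\|v_h\|_{L^2}\cdot(\text{inverse}) \lss h^{\beta-1}\|\psi^\e\|_{H^2}\|v_h\|_{L^2}$, which is small for $h$ small; (6) collect terms, choose $h_0$ small, hide the consistency-error contribution (which carries a power of $h$) into the left-hand side, and divide by $\|v_h\|_{L^2(\W)}$.

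The main obstacle I expect is step (4): handling $(A:D^2(\psi^\e - I_h\psi^\e), v_h)$ with only $v_h \in V_h$ and no extra regularity — a naive Cauchy–Schwarz gives $\|D^2(\psi^\e - I_h\psi^\e)\|_{L^2}\|v_h\|_{L^2}$, and $\|D^2(\psi^\e - I_h\psi^\e)\|_{L^2}$ is only $\mathcal{O}(1)\cdot\|\psi^\e\|_{H^2}$, not small, so this term will not by itself be absorbed. The fix is to not measure the interpolation error in $H^2$ but to move derivatives: write $(A:D^2(\psi^\e - I_h\psi^\e), v_h)$, integrate by parts twice to land derivatives on $v_h$ (legitimate since $v_h \in V_h \subset H^2\cap H_0^1$, and boundary terms involving $\psi^\e - I_h\psi^\e$ vanish or are controlled by the $H^1_0$-conformity), obtaining something like $(\psi^\e - I_h\psi^\e, D^2(A^T v_h)) $ plus lower-order terms, and then $A$ being merely continuous forces us to first freeze $A = A(x_0)$ locally or to absorb the non-smoothness of $A$ — which is exactly why the freezing-coefficient apparatus (Lemmas~\ref{lemfe1}, \ref{lemfe2}) and the local stability Theorem~\ref{thm:cc-caldzyg} are available. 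An alternative, and probably cleaner, route is to avoid integrating against $D^2 v_h$ by instead working with $\|\mLhes v_h\|_{H_h^{-2}(\W)}$ directly: since $I_h\psi^\e \in V_h$, we have $(\mLhes v_h, I_h\psi^\e) \leq \|\mLhes v_h\|_{H_h^{-2}(\W)}\|I_h\psi^\e\|_{H^2(\W)}$, and the whole difficulty is funneled into showing $|\mAe(\psi^\e - I_h\psi^\e, v_h)| \lss (\text{small in }h)\,\|v_h\|_{L^2(\W)}\|\psi^\e\|_{H^2(\W)}$; I would establish that using the $H^2$- and $L^2$-interpolation bounds of Lemma~\ref{lemfesuperapprox} for the $\e$-term and a duality argument (or a negative-norm estimate coupled with Lemma~\ref{leminverseineq}) for the $A$-term, with the continuity of $A$ handled via a local freezing exactly as in the continuous case.
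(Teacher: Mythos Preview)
Your direct duality strategy has a genuine gap at step~(4), and your own hesitation there is well placed. The consistency error you must absorb is
\[
\mAe(\psi^\e - I_h\psi^\e, v_h)=\e\bigl(\Delta(\psi^\e - I_h\psi^\e),\Delta v_h\bigr)-\bigl(A:D^2(\psi^\e - I_h\psi^\e),v_h\bigr),
\]
and the obstruction is that $\psi^\e$ enjoys only the $H^2$ regularity provided by \eqref{eqnnewstab}, so $\|\psi^\e-I_h\psi^\e\|_{H^2(\W)}$ is merely $\mathcal{O}(1)\,\|\psi^\e\|_{H^2(\W)}$, not $o(1)$. Your first fix (integrate by parts twice and use the $L^2$ interpolation estimate) does not help: even when $A$ is frozen to a constant so the integration by parts is legitimate, you obtain $\|\psi^\e-I_h\psi^\e\|_{L^2(\W)}\|v_h\|_{H^2(\W)}\lss h^2\|\psi^\e\|_{H^2(\W)}\cdot h^{-2}\|v_h\|_{L^2(\W)}$, again $\mathcal{O}(1)$. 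The same failure contaminates step~(5): with only $H^2$ regularity one has $\|\Delta(\psi^\e-I_h\psi^\e)\|_{L^2(\W)}\lss\|\psi^\e\|_{H^2(\W)}$ (no extra factor of $h$), so the $\e$-term is bounded by $h^{\beta-2}\|\psi^\e\|_{H^2(\W)}\|v_h\|_{L^2(\W)}$, which for $\beta=2$ is not small. Invoking the higher regularity of Theorem~\ref{thm_ue_reg} does not save the argument either, since $\|\psi^\e\|_{H^3(\W)}\lss\e^{-1}\|v_h\|_{L^2(\W)}$ and $h\,\e^{-1}\sim h^{1-\beta}$ blows up.

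The paper avoids this obstruction by \emph{not} running the duality against $\|v_h\|_{L^2(\W)}$ directly. It first proves, via a local freezing-coefficient argument plus cutoff functions and a covering (Steps~1--2), a G\r{a}rding-type inequality $\|v_h\|_{L^2(\W)}\lss\|\mLhes v_h\|_{H_h^{-2}(\W)}+\|v_h\|_{H^{-1}(\W)}$, and only then applies duality (Step~3) to control $\|v_h\|_{H^{-1}(\W)}$. The point of passing to $H^{-1}$ is compactness: the dual norm is tested against $g$ in the unit sphere of $H_0^1(\W)$, which is precompact in $L^2(\W)$, so the image set $W=\{(\mLe)^{-1}g\}$ is precompact in $H^2(\W)$, and the Schatz--Wang lemma supplies, for any $\tau>0$, an $h_*$ and discrete $\phi_h$ with $\|\phi-\phi_h\|_{H^2(\W)}\leq\tau$ \emph{uniformly} over $\phi\in W$. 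That uniform $H^2$-smallness is exactly what your direct argument lacks: your $\psi^\e$ ranges over $(\mLe)^{-1}$ applied to the $L^2$ unit sphere (intersected with $V_h$), which is not precompact, so no such uniform approximation exists. Your suggested fallback of ``local freezing exactly as in the continuous case'' is, once made precise, essentially the paper's Steps~1--2; it is not a shortcut around them but the missing ingredient itself.
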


\begin{proof}  
Since the proof is long, we divide it into three steps. 

{\em Step 1: Local Estimates for $\mLhes$.}
Let $x_0\in \W$.  Given $\d>0$, define $h_{\d}$ and $R_{\d}$ as in Lemma \ref{lemfe2}.  Let $h<h_{\d}$, $R_1 = (1/3)R_{\d}$, and $B_1$ be the ball centered at $x_0$ with radius $R_1$.  Let $v_h\in V_h(B_{R_1})$. From the assumption on $\mLe$, there exists $\phi^\e\in H^2(\W)\cap H_0^1(\W)$ with 
\[
\mAze(\phi^\e,w) = (v_h^\e,w)\quad\forall w\in H^2(\W)\cap H_0^1(\W)
\] and 
\begin{align} \label{eqn4.3.1}
\|\phi^\e\|_{H^2(\W)} \lss \|v_h\|_{L^2(\W)} = \|v_h\|_{L^2(B_1)}.
\end{align}
We also note that 
\begin{align} \label{eqn4.3.2}
\bigl(\mLhe \psi,w_h\bigr) = \mAe(\psi,w_h) = \bigl(\mLe \psi,w_h\bigr) \qquad\forall 
\psi\in H^2(\W)\cap H_0^1(\W),\, w_h\in V_h.
\end{align}
Let $\phi_h^\e = (\mLzhe)^{-1}v_h \in V_h$.  By \eqref{eqn4.3.2}, we have
\begin{align}\label{eqn4.3.3}
\|v_h\|_{L^2(B_1)}^2 &= \|v_h\|_{L^2(\W)} = (v_h,v_h) = (\mLe \phi^\e,v_h) = (\mLhe \phi^\e,v_h)\\
&=\bigl(\mLhe\phi_h^\e,v_h\bigr) + \bigl(\mLhe(\phi^\e-\phi_h^\e),v_h\bigr) \nonumber\\
&= \bigl(\mLhes v_h,\phi_h^\e\bigr) + \bigl(\mLzhe(\phi^\e-\phi_h^\e),v_h\bigr) \nonumber\\
&\qquad + \bigl((\mLhe-\mLzhe)(\phi^\e-\phi_h^\e),v_h\bigr).  \nonumber
\end{align}
Since $\phi_h^\e = (\mLzhe)^{-1}v_h$, we obtain the standard Galerkin orthogonality, namely
\begin{align} \label{eqn4.3.4}
(\mLzhe(\phi^\e-\phi_h^\e),v_h) &= \mAze(\phi^\e-\phi_h^\e,v_h) = \mAze(\phi^\e,v_h) - \mAze(\phi_h^\e,v_h) \\
&= (v_h,v_h) - (v_h,v_h) = 0.\nonumber
\end{align}
Moreover, by Theorem \ref{thm:cc-caldzyg}, \eqref{eqn4.3.4}, and \eqref{eqn4.3.1}, we have
\begin{align}\label{eqn4.3.5}
\|\phi_h^\e\|_{H^2(\W)} &\lss \|\mLzhe \phi_h^\e\|_{L_h^2(\W)} \leq \|\mLzhe \phi^\e\|_{L_h^2(\W)} = \|\mLzhe \phi^\e\|_{L^2(\W)} \\ 
& \lss  \|\phi^\e\|_{H^2(\W)} \lss \|v_h\|_{L^2(B_1)}.   \nonumber
\end{align}
Employing Lemma \ref{lemfe2}, \eqref{eqn4.3.1}, \eqref{eqn4.3.3}, and \eqref{eqn4.3.5}, we obtain
\begin{align*}
\|v_h\|_{L^2(B_1)}^2 &= (\mLhes v_h,\phi_h^\e) + ((\mLhe-\mLzhe)(\phi^\e-\phi_h^\e),v_h) \\
&\leq \|\mLhes v_h\|_{H_h^{-2}(\W)}\|\phi_h^\e\|_{H^2(\W)} + \|(\mLhe-\mLzhe)(\phi^\e-\phi_h^\e)\|_{L_h^2(\W)}\|v_h\|_{L^2(\W)} \\
&\lss \|\mLhes v_h\|_{H_h^{-2}(B_1)}\|v_h\|_{L^2(B_1)} + \d\|(\phi^\e-\phi_h^\e)\|_{H^2(\W)}\|v_h\|_{L^2(B_1)} \\
&\lss \|\mLhes v_h\|_{H_h^{-2}(B_1)}\|v_h\|_{L^2(B_1)} + \d(\|\phi^\e\|_{H^2(\W)}+\|\phi_h^\e\|_{H^2(\W)})\|v_h\|_{L^2(B_1)} \\
&\lss \|\mLhes v_h\|_{H_h^{-2}(B_1)}\|v_h\|_{L^2(B_1)} + \d\|v_h\|_{L^2(B_1)}^2 . 
\end{align*}
Taking $\d$ sufficiently small and then dividing both sides by $\|v_h\|_{L^2(B_1)}$ 
gives us a local version of \eqref{eqn4.3}, that is,
\begin{align}\label{eqn4.3.6}
\|v_h\|_{L^2(B_1)} \lss \|\mLhes v_h\|_{H_h^{-2}(B_1)}.
\end{align}

{\em Step 2: A G\r{a}rding-type inequality.}
Let $R_3 = 3R_1$, and $\eta\in C^\infty(\W)$ be a cutoff function with the following properties:
\begin{align} \label{eqn4.3.6.1}
0\leq \eta\leq 1, \quad \eta\big|_{B_1} \equiv 1,\quad \eta\big|_{\W\setminus B_2} \equiv 0,\quad |\eta|_{W^{m,\infty}(\W)} = \mathcal{O}(R_1^{-m}).  
\end{align}
Let $v_h\in V_h$.  By \eqref{eqn4.3.6} we have
\begin{align}\label{eqn4.3.7}
\|v_h\|_{L^2(B_1)} &= \|\eta v_h\|_{L^2(B_1)} \leq \|\eta v_h- I_h(\eta v_h)\|_{L^2(B_1)} + \|I_h(\eta v_h)\|_{L^2(B_1)} \\
&\lss \|\eta v_h- I_h(\eta v_h)\|_{L^2(B_1)} + \|\mLhes I_h(\eta v_h)\|_{H_h^{-2}(B_1)} \nonumber\\
&\lss \|\eta v_h- I_h(\eta v_h)\|_{L^2(B_1)} + \|\mLhes (I_h(\eta v_h)-\eta v_h)\|_{H_h^{-2}(B_1)} 
\nonumber \\
&\qquad + \|\mLhes\eta v_h\|_{H_h^{-2}(B_1)} \nonumber.
\end{align}
Using Lemma \ref{lemfe1}, we have
\begin{align*}
\|\mLhes (I_h(\eta v_h)-\eta v_h)\|_{H_h^{-2}(B_1)} &= \sup_{w_h\in V_h(B_1)} \frac{(\mLhes (I_h(\eta v_h)-\eta v_h),w_h)}{\|w_h\|_{H^2(B_1)}} \\ 
&\lss \sup_{w_h\in V_h(B_1)} \frac{(\mLhe w_h,(I_h(\eta v_h)-\eta v_h))}{\|w_h\|_{H^2(B_1)}}
\nonumber \\
&\lss \sup_{w_h\in V_h(B_1)} \frac{\|\mLhe w_h\|_{L^2(B_1)}\|I_h(\eta v_h)-\eta v_h\|_{L^2(B_1)}}{\|w_h\|_{H^2(B_1)}}  \nonumber \\
&\lss \sup_{w_h\in V_h(B_1)} \frac{\|w_h\|_{H^2(B_1)}\|I_h(\eta v_h)-\eta v_h\|_{L^2(B_1)}}{\|w_h\|_{H^2(B_1)}} \nonumber\\
& = \|I_h(\eta v_h)-\eta v_h\|_{L^2(B_1)}. \nonumber 
\end{align*}
Thus, \eqref{eqn4.3.7} becomes
\begin{align}\label{eqn4.3.8}
\|v_h\|_{L^2(B_1)} \lss \|\eta v_h- I_h(\eta v_h)\|_{L^2(B_1)} + \|\mLhes\eta v_h\|_{H_h^{-2}(B_1)}.
\end{align}
Using Lemmas \ref{lemfesuperapprox} and \ref{leminverseineq}, we have 
\begin{align}\label{eqn4.3.9}
\|v_h\|_{L^2(B_1)} &\lss \frac{h}{R_1}\|v_h\|_{L^2(B_3)} +  \|\mLhes\eta v_h\|_{H_h^{-2}(B_3)} \\
&\lss\frac{1}{R_1}\|v_h\|_{H^{-1}(B_3)} +  \|\mLhes\eta v_h\|_{H_h^{-2}(B_3)}. \nonumber
\end{align}
We now wish to remove $\eta$ from $\|\mLhes\eta v_h\|_{H_h^{-2}(B_3)}$ in \eqref{eqn4.3.9}.  Let $w_h\in V_h(B_3)$ and decompose $\mLhes\eta v_h$ as follows:

\begin{align}\label{eqn4.3.10}
\bigl(\mLhes( \eta v_h),w_h\bigr) &= \bigl(\mLhe w_h,\eta v_h\bigr) \\
&= \bigl(\mLhe w_h\eta,v_h\bigr)
+\big[ \bigl(\mLhe w_h,\eta v_h\bigr)-\bigl(\mLhe w_h\eta, v\bigr)\big] \nonumber \\
&= \bigl(\mLhe (I_h(w_h\eta)),v_h\bigr)+\bigl(\mLhe(w_h\eta-I_h(w_h\eta)),v_h\bigr) \nonumber \\
&\quad+\big[(\mLhe w_h,\eta v_h)-(\mLhe w_h\eta, v_h)\big]\nonumber \\
&=: I_1 + I_2 + I_3 \nonumber.
\end{align}
We now bound each $I_i$ in order.  For $I_1$, the stability of $I_h$ and \eqref{eqn4.3.6.1} give us
\begin{align}\label{eqn4.3.11}
I_1 &= \bigl(\mLhes v_h,I_h(\eta w_h)\bigr)\lss \|\mLhes v_h\|_{H_h^{-2}(B_3)}\|I_h(\eta w_h)\|_{H^2(B_3)} \\ 
&\lss\|\mLhes v_h\|_{H_h^{-2}(B_3)}\|\eta w_h\|_{H^2(B_3)} \nonumber \\
&\lss \frac{1}{R_1^2}\|\mLhes v_h\|_{H_h^{-2}(B_3)}\|w_h\|_{H^2(B_3)}.  \nonumber
\end{align}
To bound $I_2$, we use Lemmas \ref{lemfesuperapprox}, \ref{leminverseineq}, 
and \ref{lemfe1} to obtain
\begin{align}\label{eqn4.3.12}
I_2 &\lss \|\eta w_h - I_h(\eta w_h) \|_{H^2(B_3)}\| v_h \|_{L^2(B_3)}\lss \frac{h}{R_1^3}\|w_h\|_{H^2(B_3)}\|v_h\|_{L^2(B_3)} \\
&\lss  \frac{1}{R_1^3}\|w_h\|_{H^2(B_3)}\|v_h\|_{H^{-1}(B_3)}. \nonumber
\end{align}
In order to bound $I_3$, we reintroduce the operator $\mLzhe$ and define $\tilde{A}=A-A_0$.  We then write
\begin{align}\label{eqn4.3.13}
I_3 &= \bigl(\mLhe w_h,\eta v_h\bigr) - \bigl(\mLhe w_h \eta,v_h\bigr) \\
&=\bigl(\mLzhe w_h,\eta v_h\bigr) - \bigl(\mLzhe \eta w_h,v_h\bigr) \nonumber \\
&\quad+\Bigl[ \bigl(\mLhe w_h,\eta v_h\bigr) - \bigl(\mLhe \eta w_h,v_h\bigr) 
- \bigl(\mLzhe w_h,\eta v_h\bigr) + \bigl(\mLzhe \eta w_h,v_h\bigr) \Bigr]\nonumber \\
&=\e\int_{B_3} v_h\Delta w_h\Delta \eta  + 2\Delta w_h(\grad \eta\cdot\grad v_h)-w_h\Delta \eta\Delta v_h - 2\Delta w_h(\grad \eta\cdot\grad w_h)\dx[x] \nonumber \\
&\quad -\int_{B_3} \left(w_hA_0:D^2\eta + (A_0+A_0^T)\grad\eta\cdot\grad w_h \right) v_h\dx[x]\nonumber \\
&\quad -\int_{B_3} \left(w_h\tilde{A}:D^2\eta + (\tilde{A}+\tilde{A}^T)\grad\eta\cdot\grad w_h \right) v_h\dx[x]\nonumber=:K_1 + K_2 + K_3.
\end{align}
We now must bound each $K_i$.  Since $\e<Ch^2$ for some pure constant $C>0$, we bound $K_1$ using Lemma \ref{leminverseineq}, standard inverse inequalities, and \eqref{eqn4.3.6.1}
as follows:
\begin{align}\label{eqn4.3.14}
K_1 &\lss \frac{h^2}{R_1^2} \|v_h\|_{L^2(B_3)}\|w_h\|_{H^2(B_3)} + \frac{h_2}{R_1}\|v_h\|_{H^1(B_3)}\|w_h\|_{H^2(B_3)} \\ 
&\quad + h^2\|\Delta v_h\|_{H^{-1}(B_3)}\|w_h\Delta\eta\|_{H^{1}(B_3)} +  h^2\|\Delta v_h\|_{H^{-1}(B_3)}\|\grad w_h\cdot\grad\eta\|_{H^{1}(B_3)}\nonumber \\
&\lss \frac{h}{R_1^2} \|v_h\|_{H^{-1}(B_3)}\|w_h\|_{H^2(B_3)} + \frac{1}{R_1}\|v_h\|_{H^{-1}(B_3)}\|w_h\|_{H^2(B_3)} \nonumber\\
&\quad + \frac{h^2}{R_1^3}\|\grad v_h\|_{L^2(B_3)}\|w_h\|_{H^2(B_3)} +  \frac{h^2}{R_1^2}\|\grad v_h\|_{L^2(B_3)}\|w_h\|_{H^2(B_3)}\nonumber \\
&\lss \frac{h}{R_1^2} \|v_h\|_{H^{-1}(B_3)}\|w_h\|_{H^2(B_3)} + \frac{1}{R_1}\|v_h\|_{H^{-1}(B_3)}\|w_h\|_{H^2(B_3)} \nonumber\\
&\quad + \frac{1}{R_1^3}\|v_h\|_{H^{-1}(B_3)}\|w_h\|_{H^2(B_3)} +  \frac{1}{R_1^2}\|v_h\|_{H^{-1}(B_3)}\|w_h\|_{H^2(B_3)}\nonumber \\
&\lss \frac{1}{R_1^3} \|w_h\|_{H^2(B_3)}\|v_h\|_{H^{-1}(B_3)}. \nonumber 
\end{align} 
To bound $K_2$, we have 
\begin{align}\label{eqn4.3.15}
K_2 &\lss \Bigl(\|w_hA_0:D^2\eta\|_{H^1(B_3)}
+ \|(A_0+A_0^T)\grad\eta\cdot\grad w_h\_{H^1(B_3)}\Bigr)\|v_h\|_{H^{-1}(B_3)} \\
&\lss \frac{1}{R_1^3}\|w_h\|_{H^2(B_3)}\|v_h\|_{H^{-1}(B_3)}. \nonumber
\end{align}
Bounding $K_3$ requires H\"older's inequality and $\|\tilde{A}\|_{L^{\infty}(B_3)}\leq \d$ which gives us
\begin{align}\label{eqn4.3.16}
K_3 &\lss \Bigl(\|w_h\tilde{A}:D^2\eta\|_{L^2(B_3)}
+ \|(\tilde{A}+\tilde{A}^T)\grad\eta\cdot\grad w_h\|_{L^2(B_3)}\Bigr)\|v_h\|_{L^2(B_3)} \\
&\lss \d\left(\frac{1}{R_1^2}\|w_h\|_{L^2(B_3)}+\frac{1}{R_1}\|w_h\|_{H^1(B_3)}\right)\|v_h\|_{L^2(B_3)}. \nonumber
\end{align}
From Lemma C.1 of \cite{AX:FN} we have
\begin{align}\label{eqn4.3.17}
\|w_h\|_{H^l(B_3)} \lss R_1^{2-l}\|w_h\|_{H^2(B_3)},\qquad  l=0,1.
\end{align} 
Combining \eqref{eqn4.3.16} and \eqref{eqn4.3.17} yields
\begin{align}\label{eqn4.3.18}
K_3 &\lss \d \|w_h\|_{H^2(B_3)}\|v_h\|_{L^2(B_3)}. 
\end{align}
From \eqref{eqn4.3.14}, \eqref{eqn4.3.15}, and \eqref{eqn4.3.18} we get
\begin{align}\label{eqn4.3.19}
I_3 \lss \frac{1}{R_1^3}\|w_h\|_{H^2(B_3)}\|v_h\|_{H^{-1}(B_3)} + \delta \|w_h\|_{H^2(B_3)}\|v_h\|_{L^2(B_3)},
\end{align}
and from \eqref{eqn4.3.11}, \eqref{eqn4.3.12}, and \eqref{eqn4.3.19}, we have 
\begin{align*}
\bigl(\mLhes(\eta v_h),w_h \bigr) &\lss \frac{1}{R_1^3}
\Bigl( \|\mLhes v_h\|_{H^{-2}_h(B_3)} + \|v_h\|_{H^{-1}(B_3)} \Bigr) \|w_h\|_{H^2(B_3)} \\
&\quad+ \delta \|w_h\|_{H^2(B_3)}\|v_h\|_{L^2(B_3)}.
\end{align*}
Thus, 
\begin{align}\label{eqn4.3.20}
\|\mLhes(\eta v_h)\|_{H_h^{-2}(B_3)} &\lss \frac{1}{R_1^3}\Bigl( \|\mLhes v_h\|_{H^{-2}_h(B_3)}  
+ \|v_h\|_{H^{-1}(B_3)} \Bigr) \\
&\qquad + \delta\|v_h\|_{H^{-1}(B_3)}. \nonumber
\end{align}
From \eqref{eqn4.3.9} and \eqref{eqn4.3.20} we obtain
\begin{align*}
\|v_h\|_{L^2(B_1)}\lss \frac{1}{R_1^3}\left( \|\mLhes v_h\|_{H^{-2}_h(B_3)}  + \|v_h\|_{H^{-1}(B_3)} \right)+ \delta\|v_h\|_{H^{-1}(B_3)}.
\end{align*}
Since $\overline\W$ is compact and $\d$ is independent of $R_1$,
%(note that $R_1 = (1/3)R_{\d}$), 
by employing a covering argument we can show that
\begin{align}\label{eqn4.3.21}
\|v_h\|_{L^2(\W)}\lss \frac{1}{R_1^3}\Bigl( \|\mLhes v_h\|_{H^{-2}_h(\W)}  
+ \|v_h\|_{H^{-1}(\W)} \Bigr)+ \delta\|v_h\|_{H^{-1}(\W)}.
\end{align}
Choosing $\d$ sufficiently small in \eqref{eqn4.3.21} produces a G\r{a}rding type estimate, namely
\begin{align}\label{eqn4.3.22}
\|v_h\|_{L^2(\W)}\lss \|\mLhes v_h\|_{H^{-2}_h(\W)}  + \|v_h\|_{H^{-1}(\W)}.
\end{align}

{\em Step 3: A nonstandard duality argument.}
We now wish to remove $\|v_h\|_{H^{-1}(\W)}$ in \eqref{eqn4.3.22}.  To control this term we 
use a nonstandard duality argument on the operator $\mLhes$.  Let
\begin{align*}
X =\bigl\{g\in H_0^1(\W): \|\grad g\|_{L^2(\W)} = 1 \bigr\}.
\end{align*}
By the Poincar\'e inequality, there exists a constant $C = C(\W)$ such that
\begin{align*}
\|g\|_{L^2(\W)} \leq C \|\grad g\|_{L^2(\W)} < \infty.  
\end{align*}
Thus, $X$ is precompact in $H^1(\W)$ by Sobolev embedding.  Now we define
\begin{align*}
W = \bigl\{ (\mLe)^{-1}g:g\in X \bigr\}.
\end{align*}
Let $\phi\in W$ with $\mLe\phi = g$.  Then by \eqref{eqnnewstab} we have
\begin{align}\label{eqn4.3.23}
\|\phi\|_{H^2(\W)} \lss \|g\|_{L^2(\W)} <\infty. 
\end{align}
Thus, $W$ is a bounded set in $H^2(\W)\cap H_0^1(\W)$.  Let $\tau>0$.  Then from \cite[Lemma 5]{MC:SW} there exists $h_*>0$ only dependent upon $\tau$ and the closure of $W$ such that for every $\phi\in W$ and $0<h\leq h_*$, there exists a $\phi_h\in V_h$ such that
\begin{align}\label{eqn4.3.24}
\|\phi-\phi_h\|_{H^2(\W)} \leq \tau.
\end{align}
Note by the triangle inequality, \eqref{eqn4.3.23}, and \eqref{eqn4.3.24}, we have
\begin{align*}
\|\phi_h\|_{H^2(\W)} \leq \|\phi\|_{H^2(\W)} + \|\phi_h-\phi\|_{H^2(\W)} \lss C + \tau. 
\end{align*}
Thus, the set
\begin{align*}
\bigl\{\phi_h\in V_h : \|\phi_h-\phi\|_{H^2(\W)}\leq \tau \bigr\}
\end{align*}
is uniformly bounded in $\phi$ and $h$.  Let $g\in X$ and choose $\phi_g\in W$ 
with $\mLe\phi_g = g$. For $v_h\in V_h$ and $\phi_h\in V_h$ satisfying \eqref{eqn4.3.24},
we have
\begin{align}\label{eqn4.3.25}
(v_h,g) &=  (g,v_h) \\ 
&= (\mLe\phi_g,v_h) = (\mLhe\phi_g,v_h) = (\mLhe\phi_h,v_h) + (\mLhe(\phi_g-\phi_h),v_h) \nonumber \\
&= (\mLhes v_h,\phi_h) +  (\mLhe(\phi_g-\phi_h),v_h) \nonumber \\
&\lss \|\mLhes v_h\|_{H^{-2}_h(\W)}\|\phi_h\|_{H^2(\W)} + \|\phi_g-\phi_h\|_{H^2(\W)}\|v_h\|_{L^2(\W)} \nonumber\\
&\lss  \|\mLhes v_h\|_{H^{-2}_h(\W)} + \tau\|v_h\|_{L^2(\W)}. \nonumber
\end{align}  
Taking the supremum of both sides of \eqref{eqn4.3.25} over $g\in X$ gives us
\begin{align}\label{eqn4.3.26}
\|v_h\|_{H^{-1}(\W)} \lss \|\mLhes v_h\|_{H^{-2}_h(\W)} + \tau\|v_h\|_{L^2(\W)}.
\end{align}
Combining \eqref{eqn4.3.22} and \eqref{eqn4.3.26} and choosing $\tau$ sufficiently small gives us \eqref{eqn4.3} upon taking $h_0 = \min\{h_{\d},h_*\}$.  
\end{proof}

Lemma \ref{lemfe3} allows us to prove the discrete Calderon-Zygmund 
estimate \eqref{eqnfecz}.  This is shown in the Lemma below.

\begin{lemma} \label{lemfe4}
Let $h_0$ be from Lemma \ref{lemfe3} and $\e=\mathcal{O}(h^\beta)$ for some $\beta\geq 2$.  Then 
\begin{align} \label{eqn4.4}
\| v_h \|_{H^2(\W)} \lss \| \mLhe v_h \|_{L_h^2(\W)} \qquad\forall   v_h\in V_h.
\end{align}
\end{lemma}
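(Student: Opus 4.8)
The plan is to obtain \eqref{eqn4.4} directly from Lemma~\ref{lemfe3} by a duality argument on the finite-dimensional space $V_h$: the estimate \eqref{eqn4.4} for $\mLhe$ is dual to the estimate \eqref{eqn4.3} for its $L^2$-adjoint $\mLhes$, so no further freezing-coefficient work is required.

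I would begin by recording two elementary facts about $V_h$. Since $\mLhe v_h\in V_h$ for $v_h\in V_h$, taking $w_h=\mLhe v_h$ in \eqref{discretedualL2} shows $\|\mLhe v_h\|_{L_h^2(\W)}=\|\mLhe v_h\|_{L^2(\W)}$; and since $(\cdot,\cdot)$ is an inner product on the finite-dimensional space $V_h$, the unit ball of $\|\cdot\|_{H^2(\W)}$ in $V_h$ is convex, symmetric, closed, bounded and contains $0$ in its interior, so the bipolar theorem gives the bidual identity
\begin{align*}
\|v_h\|_{H^2(\W)} = \sup_{w_h\in V_h\setminus\{0\}}\frac{(v_h,w_h)}{\|w_h\|_{H_h^{-2}(\W)}} \qquad\forall v_h\in V_h ,
\end{align*}
with $\|\cdot\|_{H_h^{-2}(\W)}$ as in \eqref{discretedualH2}.

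Next, Lemma~\ref{lemfe3} forces $\mLhes$ to be injective on $V_h$, hence, being an endomorphism of a finite-dimensional space, bijective. Fixing $v_h\in V_h$ and substituting $w_h=\mLhes z_h$ in the bidual identity (this is a bijection of $V_h\setminus\{0\}$ onto itself), using $(v_h,\mLhes z_h)=(\mLhe v_h,z_h)$ from the definition of $\mLhes$ together with the bound $\|\mLhes z_h\|_{H_h^{-2}(\W)}\gss\|z_h\|_{L^2(\W)}$ of Lemma~\ref{lemfe3}, we obtain
\begin{align*}
\|v_h\|_{H^2(\W)} = \sup_{z_h\in V_h\setminus\{0\}}\frac{(\mLhe v_h,z_h)}{\|\mLhes z_h\|_{H_h^{-2}(\W)}} \lss \sup_{z_h\in V_h\setminus\{0\}}\frac{(\mLhe v_h,z_h)}{\|z_h\|_{L^2(\W)}} = \|\mLhe v_h\|_{L_h^2(\W)} ,
\end{align*}
which is \eqref{eqn4.4}. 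In the middle inequality one may restrict the supremum to those $z_h$ with $(\mLhe v_h,z_h)>0$ (for $\mLhe v_h=0$ injectivity gives $v_h=0$ and the claim is trivial), so the sign causes no difficulty.

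The genuinely hard part — the discrete freezing-coefficient argument and the nonstandard duality that controls $\|v_h\|_{L^2(\W)}$ — is already contained in Lemma~\ref{lemfe3}, so there is no substantive obstacle left. The only points requiring a line of justification are the bidual identity for $\|\cdot\|_{H^2(\W)}$ on $V_h$ and the surjectivity of $\mLhes$ that legitimizes the substitution; both are immediate consequences of $V_h$ being finite dimensional.
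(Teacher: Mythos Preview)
Your proof is correct and follows essentially the same duality strategy as the paper: both deduce from Lemma~\ref{lemfe3} that $\mLhes$ is bijective on the finite-dimensional space $V_h$, and then transfer the bound \eqref{eqn4.3} for $\mLhes$ to the desired bound for $\mLhe$ by duality. The only cosmetic difference is packaging: the paper constructs the dual element explicitly---it solves for $q_h\in V_h$ with $(w_h,\mLhes q_h)=\langle v_h,w_h\rangle_{H^2(\W)}$ and then computes $\|v_h\|_{H^2(\W)}^2=(\mLhe v_h,q_h)\leq\|\mLhe v_h\|_{L_h^2(\W)}\|q_h\|_{L^2(\W)}\lss\|\mLhe v_h\|_{L_h^2(\W)}\|v_h\|_{H^2(\W)}$---whereas you encode the same computation through the bidual identity and the substitution $w_h=\mLhes z_h$; the paper's $q_h$ is precisely (a scalar multiple of) the maximizer in your supremum.
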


\begin{proof}
We first show $\mLhes$ is 1-1.  Suppose there is a $w_h\in V_h$ with $\mLhes w_h = 0$.  By Lemma \ref{lemfe3}, we have
\begin{align}\label{eqn4.4.0}
\| w_h \|_{L^2(\W)} \lss \|\mLhes w_h\|_{H_h^{-2}(\W)}  = 0.
\end{align}
Thus $w_h = 0$, and it follows that $\mLhes$ is 1-1.  Since $\mLhes:V_h\to V_h$, it is invertible on $V_h$.
Let $v_h\in V_h$. Consider the problem of finding $q_h\in V_h$ such that
\begin{align}
\bigl(w_h,\mLhes q_h\bigr) = \bigl(D^2v_h,D^2w_h\bigr) + (\grad v_h,\grad w_h) + (v_h,w_h). 
\end{align}
Since $\mLhes$ is invertible, such a $q_h$ exits.  By \eqref{eqn4.3} we have
\begin{align}\label{eqn4.4.1}
\|q_h\|_{L^2(\W)} &\lss \|\mLhes q_h\|_{H_h^{-2}(\W)} = \sup_{w_h\in V_h}\frac{(w_h,\mLhes q_h)}{\|w_h\|_{H^2(\W)}} \\ % = \sup_{w_h\in V_h}\frac{(D^2v_h,D^2w_h)}{\|w_h\|_{H^2(\W)}} \\
&\lss  \sup_{w_h\in V_h}\left( \frac{\|D^2v_h\|_{L^2(\W)}\|D^2w_h\|_{L^2(\W)} + \|\nabla v_h\|_{L^2(\W)}\|\nabla w_h\|_{L^2(\W)}}{\|w_h\|_{H^2(\W)}} \right. \nonumber \\ 
& \qquad \qquad \qquad + \left. \frac{\|v_h\|_{L^2(\W)}\|w_h\|_{L^2(\W)}}{\|w_h\|_{H^2(\W)}} \right) \nonumber \\
&\lss \|v_h\|_{H^2(\W)}. \nonumber 
\end{align}
From \eqref{eqn4.4.1} we obtain
\begin{align}\label{eqn4.4.2}
\| v_h \|_{H^2(\W)}^2 &= \bigl(D^2v_h,D^2v_h\bigr) + (\grad v_h,\grad v_h) + (v_h,v_h) = \bigl(v_h,\mLhes q_h\bigr) 
= \bigl(\mLhe v_h,q_h\bigr) \\
&\leq \|\mLhe v_h\|_{L_h^2(\W)}\|q_h\|_{L^2(\W)} \lss \|\mLhe v_h\|_{L_h^2(\W)}\|v_h\|_{H^2(\W)}. \nonumber
\end{align}
Dividing both sides of \eqref{eqn4.4.2} by $\|v_h\|_{H^2(\W)}$ gives us \eqref{eqn4.4}.  The proof is complete.
\end{proof}

We now present the well-posedness of our $C^1$ finite element scheme.

\begin{theorem}\label{thmfeexist}
Let $h_0$ be from Lemma \ref{lemfe3} and $\e=\mathcal{O}(h^\beta)$ for some $\beta\geq 2$. Then there is a unique $u_h^\e$ satisfying \eqref{eqnc1fe} and
\begin{align}\label{eqn4.5}
\|u_h^\e\|_{H^2(\W)} \lss \|f \|_{L^2(\W)}.
\end{align}
\end{theorem}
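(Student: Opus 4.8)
The plan is to obtain Theorem~\ref{thmfeexist} as a short consequence of the discrete Calderon-Zygmund estimate \eqref{eqn4.4} in Lemma~\ref{lemfe4}, combined with the elementary fact that a linear endomorphism of a finite-dimensional vector space is injective if and only if it is bijective. First I would reformulate \eqref{eqnc1fe}: since $\mAe(u_h^\e,v_h) = (\mLhe u_h^\e,v_h)$ with $\mLhe u_h^\e\in V_h$, the requirement $(\mLhe u_h^\e,v_h) = (f,v_h)$ for all $v_h\in V_h$ says precisely that $\mLhe u_h^\e = P_h f$, where $P_h f\in V_h$ denotes the $L^2(\W)$-orthogonal projection of $f$ onto $V_h$. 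Thus \eqref{eqnc1fe} is equivalent to the finite-dimensional linear system $\mLhe u_h^\e = P_h f$ on $V_h$.

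Next I would establish that $\mLhe:V_h\to V_h$ is injective: if $\mLhe v_h = 0$, then \eqref{eqn4.4} gives $\|v_h\|_{H^2(\W)}\lss \|\mLhe v_h\|_{L_h^2(\W)} = 0$, so $v_h = 0$. Since $V_h$ is finite dimensional, $\mLhe$ is therefore bijective, and $u_h^\e := (\mLhe)^{-1}P_h f$ is the unique solution of \eqref{eqnc1fe}. For the stability bound \eqref{eqn4.5}, I would apply \eqref{eqn4.4} to $u_h^\e$, recall that $V_h(\W) = V_h$, and unwind the discrete norm using the equation:
\[
\|u_h^\e\|_{H^2(\W)} \lss \|\mLhe u_h^\e\|_{L_h^2(\W)} = \sup_{w_h\in V_h\setminus\{0\}}\frac{(\mLhe u_h^\e,w_h)}{\|w_h\|_{L^2(\W)}} = \sup_{w_h\in V_h\setminus\{0\}}\frac{(f,w_h)}{\|w_h\|_{L^2(\W)}} \leq \|f\|_{L^2(\W)},
\]
the final inequality being Cauchy-Schwarz. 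This yields \eqref{eqn4.5}.

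In contrast to the lemmas that precede it, this theorem presents essentially no obstacle: all of the difficulty has already been absorbed into Lemma~\ref{lemfe4} and, behind it, Lemma~\ref{lemfe3}, where the $h$-uniform discrete Calderon-Zygmund bound is proved under the scaling hypothesis $\e = \mathcal{O}(h^\beta)$, $\beta\geq 2$. Given that estimate, the remaining steps are routine finite-dimensional linear algebra together with a one-line duality computation; the only point requiring attention is that the scaling hypothesis $\e = \mathcal{O}(h^\beta)$ with $\beta\geq 2$ and the smallness of $h$ (through $h_0$ in Lemma~\ref{lemfe3}) are inherited exactly as stated in the theorem.
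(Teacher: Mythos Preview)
Your proposal is correct and essentially matches the paper's own proof: both deduce injectivity of $\mLhe$ from the discrete Calderon--Zygmund estimate \eqref{eqn4.4}, invoke finite-dimensionality to get bijectivity, and then read off \eqref{eqn4.5} by unwinding $\|\mLhe u_h^\e\|_{L_h^2(\W)}$ via the scheme and Cauchy--Schwarz. The only cosmetic difference is your explicit reformulation with the $L^2$-projection $P_h f$, which the paper leaves implicit.
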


\begin{proof}
From a similar argument to \eqref{eqn4.4.0}, we assert that $\mLhe$ is 1-1.  Since $\mLhe:V_h\to V_h$, it is invertible.  Thus, given $f\in L^2(\W)$, there is a unique $u_h^\e\in V_h$ satisfying \eqref{eqnc1fe}.  By \eqref{eqn4.4} we have
\begin{align*}
\|u_h^\e\|_{H^2(\W)} \lss \|\mLhe u_h\|_{L_h^2(\W)} = \sup_{v_h\in V_h}\frac{(\mLhe u_h,v_h)}{\|v_h\|_{L^2(\W)}} = \sup_{v_h\in V_h}\frac{(f,v_h)}{\|v_h\|_{L^2(\W)}} \leq \|f\|_{L^2(\W)},
\end{align*}
yielding \eqref{eqn4.5}.  The proof is complete.
\end{proof}

Since $\mLhe$ is consistent, we have a C\'ea type estimate which asserts an optimal convergence in the $H^2(\W)$-norm:

\begin{theorem}\label{thmfeCea}
Suppose $\e = \mathcal{O}(h^\beta)$ for some $\beta\geq 2$.  Let $u^{\e}\in H^2(\W)\cap H_0^1(\W)$ and $u_h^\e\in V_h$ satisfy \eqref{eqnc1fe} and \eqref{eqn:weaksoln}, respectively.  Then 
\begin{align}\label{eqn4.6}
\| u^\e-u_h^\e \|_{H^2(\W)} \lss \inf_{v_h\in V_h} \| u^\e-v_h \|_{H^2(\W)}.
\end{align}
Moreover, if $u^\e\in H^s(\W)$ for $s\geq 2$, we have
\begin{align}\label{eqn4.7}
\| u^\e-u_h^\e \|_{H^2(\W)} \lss h^{r-2}\| u^\e \|_{H^r(\W)},
\end{align}
where $r=\min\{k+1,s\}$.  
\end{theorem}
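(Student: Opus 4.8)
The plan is to run a standard Céa-type argument, but with the energy estimate replaced by the discrete Calderón--Zygmund bound of Lemma~\ref{lemfe4}, measuring the error in $H^2(\W)$ through the $L_h^2(\W)$-norm of $\mLhe$. First I would record the consistency of the scheme: the weak formulation \eqref{eqn:weaksoln} holds for every $v\in H^2(\W)\cap H_0^1(\W)$, hence in particular for every $w_h\in V_h$; subtracting this from \eqref{eqnc1fe} gives the Galerkin orthogonality $\mAe(u^\e-u_h^\e,w_h)=0$ for all $w_h\in V_h$, equivalently $(\mLhe(u^\e-u_h^\e),w_h)=0$ for all $w_h\in V_h$.

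Next, fix an arbitrary $v_h\in V_h$ and apply \eqref{eqn4.4} to $u_h^\e-v_h\in V_h$:
\[
\|u_h^\e-v_h\|_{H^2(\W)}\lss\|\mLhe(u_h^\e-v_h)\|_{L_h^2(\W)}=\sup_{w_h\in V_h}\frac{\mAe(u_h^\e-v_h,w_h)}{\|w_h\|_{L^2(\W)}}.
\]
Using Galerkin orthogonality to replace $u_h^\e$ by $u^\e$ in the numerator, the bilinear form becomes $\mAe(u^\e-v_h,w_h)=\e\bigl(\Delta(u^\e-v_h),\Delta w_h\bigr)-\bigl(A:D^2(u^\e-v_h),w_h\bigr)$. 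The second term is bounded by $\|A\|_{L^\infty(\W)}\|u^\e-v_h\|_{H^2(\W)}\|w_h\|_{L^2(\W)}$; for the first term I would invoke an inverse inequality $\|\Delta w_h\|_{L^2(\W)}\lss h^{-2}\|w_h\|_{L^2(\W)}$, so that it is controlled by $\e h^{-2}\|u^\e-v_h\|_{H^2(\W)}\|w_h\|_{L^2(\W)}$. Here the hypothesis $\e=\mathcal{O}(h^\beta)$ with $\beta\ge2$ is exactly what keeps $\e h^{-2}=\mathcal{O}(1)$. Dividing by $\|w_h\|_{L^2(\W)}$ and taking the supremum yields $\|u_h^\e-v_h\|_{H^2(\W)}\lss\|u^\e-v_h\|_{H^2(\W)}$.

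Estimate \eqref{eqn4.6} then follows from $\|u^\e-u_h^\e\|_{H^2(\W)}\le\|u^\e-v_h\|_{H^2(\W)}+\|u_h^\e-v_h\|_{H^2(\W)}$ and taking the infimum over $v_h\in V_h$. For \eqref{eqn4.7} I would choose $v_h=I_hu^\e$, the $C^1$ nodal interpolant, and invoke standard finite element interpolation theory, which gives $\|u^\e-I_hu^\e\|_{H^2(\W)}\lss h^{r-2}\|u^\e\|_{H^r(\W)}$ with $r=\min\{k+1,s\}$. I do not expect a serious obstacle: essentially all the work was already done in Lemma~\ref{lemfe4}, and the only delicate point is the inverse-inequality treatment of the $\e\Delta^2$ contribution, whose harmlessness rests entirely on $\beta\ge2$; for $\beta<2$ one would lose control of that term and both the argument and (presumably) the optimal $H^2$ rate would fail.
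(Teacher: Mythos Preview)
Your proof is correct and follows essentially the same route as the paper: Galerkin orthogonality, the discrete Calder\'on--Zygmund bound \eqref{eqn4.4} applied to $u_h^\e-v_h$, boundedness of $\mLhe$ (your inline Hölder/inverse-inequality argument is exactly the content of Lemma~\ref{lemfe1}, which the paper simply cites), then the triangle inequality and interpolation. The only cosmetic difference is that the paper packages the $\e h^{-2}=\mathcal{O}(1)$ step as a separate lemma rather than writing it out in the proof.
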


\begin{proof}
By the consistency of $\mLhe$, we have the usual Galerkin orthogonality:
\begin{align}\label{eqn4.6.1}
\bigl(\mLhe(u^\e-u_h^\e),v_h\bigr) &= \mAe(u^\e-u_h^\e,v_h) \\
& = (f,v_h) - (f,v_h) = 0 \qquad\forall v_h\in V_h.  \nonumber 
\end{align}
Let $v_h\in V_h$.  By Theorem \ref{thmfeexist}, Lemma \ref{lemfe1}, and \eqref{eqn4.6.1}, we have
\begin{align}\label{eqn4.6.2}
\| u_h^\e - v_h \|_{H^2(\W)} &\lss \|\mLhe (u_h^\e - v_h) \|_{L_h^2(\W)} = \sup_{w_h\in V_h} \frac{ \bigl(\mLhe (u_h^\e - v_h),w_h\bigr)}{\|w_h\|_{L^2(\W}} \\
&\lss \sup_{w_h\in V_h} \frac{\bigl(\mLhe (u^\e - v_h),w_h\bigr)}{\|w_h\|_{L^2(\W)}}  \lss  \|\mLhe (u^\e - v_h)\|_{L_h^2(\W)}      \nonumber \\
&\lss \|u^\e-v_h\|_{H^2(\W)}.  \nonumber
\end{align}
Hence, by the triangle inequality and \eqref{eqn4.6.2}, we obtain
\begin{align}\label{eqn4.6.3}
\| u^\e - u_h^\e \|_{H^2(\W)} \lss \| u^\e - v_h \|_{H^2(\W)} + \| u_h^\e - v_h \|_{H^2(\W)} \lss \| u^\e - v_h \|_{H^2(\W)}.
\end{align}
Taking the infimum of \eqref{eqn4.6.3} over all $v_h\in V_h$ yields \eqref{eqn4.6}.  Estimate \eqref{eqn4.7} follows from taking $v_h = I_h u$ and using the standard interpolation estimates.  The proof is complete.
\end{proof}

%%%%%%%%%%%%
\subsection{Numerical Experiments} \label{sect:numericaltests}

In this section, we present a series of 2-D numerical tests  
using the $C^1$-conforming (fifth order) Argyris finite element space.  
All of the tests are performed using the COMSOL software package.   
The first three tests correspond to choosing the continuous, positive definite matrix 
\begin{equation} \label{numerical_testA}
	A(x,y) = \left[
	\begin{array}{ccc}
		\left( 2x - y \right)^{1/3} + 4 e^{2-x} & \, & 
			\frac12 \sin \left( 10xy \right) - \frac12 \left( x + 2 \right)^{1/2} \\
		\frac12 \sin \left( 10xy \right) - \frac12 \left( x + 2 \right)^{1/2} & \, & 
			\left| y - 2x \right|^{1/4} + 3
	\end{array}
		\right]
\end{equation}
and $b,c = 0$ in \eqref{eqn1.1}.  
The fourth test will correspond to choosing a degenerate elliptic matrix $A$ in \eqref{eqnc1fe} 
to gauge the impact of the method for approximating ``harder" problems with 
viscosity solutions instead of strong solutions.  
We use a very fine mesh in all of the tests so that the error is dominated by the 
vanishing moment approximation to the PDE problem.  
We will observe that the error is maximized along sets where the solution is not as regular 
and along the boundary due to the auxiliary boundary condition $\Delta u^\e = 0$.  
The measured error does not appear to correspond to sets where the coefficient matrix $A$ is not 
as well-behaved.  
We also observe better rates of convergence with respect to $\e$ than the rates 
guaranteed by Theorem~\ref{thm_eps_rates}.  
Numerically we observe linear convergence in the $L^2$-norm.

\medskip
{\bf Test 1.}
Consider \eqref{eqn1.1} with $A$ defined by \eqref{numerical_testA}, $\Omega = (-2,2) \times (-2,2)$, and solution 
$u(x,y) = \frac{1}{6} \bigl| x \bigr|^3 \cos (y)$, where $f$ and $g$ are chosen accordingly.
The given test problem has a solution in $H^3(\Omega)$, 
where the third-order partial derivative with respect to $x$ is discontinuous 
along the line $x = 0$.  
The results for varying $\e$ can be found in Table~\ref{HJB_ex1_1_table}.
We can also see from Figure~\ref{HJB_ex1_1_plot} that the error is largest along the line $x = 0$, as expected.

\begin{table}[htb] 
\vspace{0.4in}
\caption{Rates of convergence for Test 1 using the vanishing moment method. 
}
\begin{center}
\scalebox{0.85}{
\begin{tabular}{| c | c | c | c | c | c | c |} 
	\hline
	$\e$ & $\left\| u^\e_h - u \right\|_{L^2(\Omega)}$ & Order & 
		$\left\| \nabla \left( u^\e_h - u \right) \right\|_{L^2(\Omega)}$ & Order & 
		$\left\| \Delta \left( u^\e_h - u \right) \right\|_{L^2(\Omega)}$ & Order  \\ 
		\hline
	4e-2 & 9.44e-3 & & 2.25e-2 & & 2.55e-1 & \\
		\hline
	2e-2 & 4.89e-3 & 0.95 & 1.32e-2 & 0.76 & 2.15e-1 & 0.24 \\
		\hline		
	1e-2 & 2.50e-3 & 0.96 & 7.84e-3 & 0.76 & 1.82e-1 & 0.24 \\
		\hline
	5e-3 & 1.27e-3 & 0.98 & 4.64e-3 & 0.76 & 1.54e-1 & 0.25 \\
		\hline
	2.5e-7 & 1.65e-7 & & 2.09e-5 & & 3.80e-3 & \\
		\hline
\end{tabular}
}
\end{center}
\label{HJB_ex1_1_table}
\end{table}

\begin{figure}[htb]
\centering
\subfloat[$u^\e_h$.]{
\includegraphics[scale=0.22]{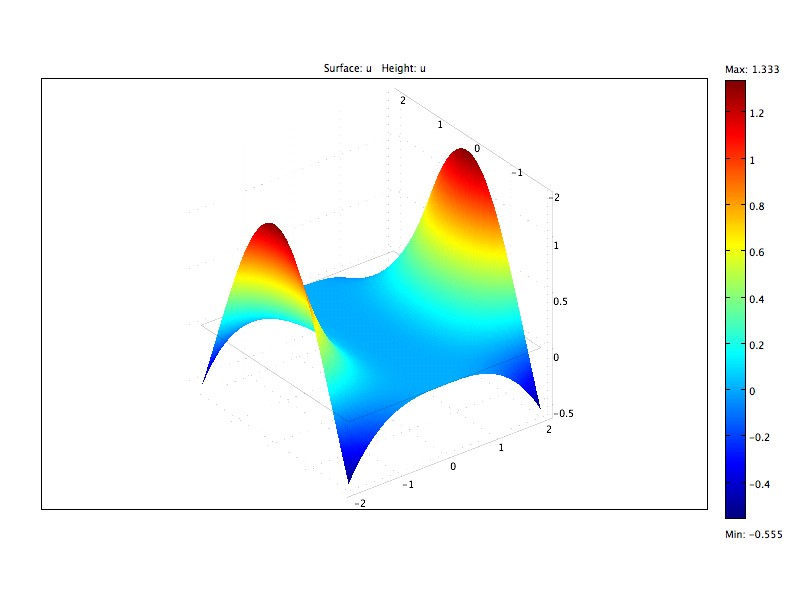}
}
\subfloat[$\Delta \left( u - u^\e_h \right)$.]{
\includegraphics[scale=0.22]{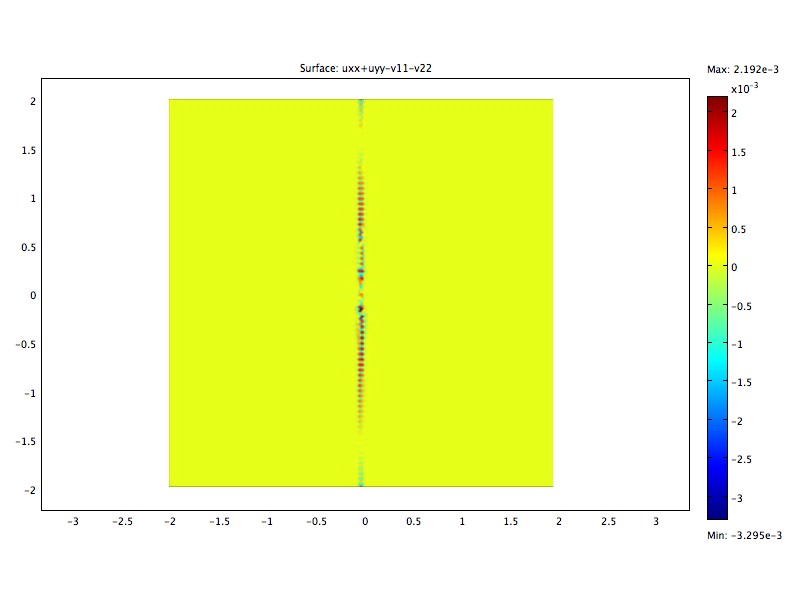}
}
\caption{
Computed solution and corresponding error for Test 1 using the vanishing moment method. 
} 
\vspace{0.35in}
\label{HJB_ex1_1_plot}
\end{figure}

%%%
\medskip
{\bf Test 2.}
Consider \eqref{eqn1.1} with $A$ defined by \eqref{numerical_testA}, $\Omega = (-2,2) \times (-2,2)$, and solution 
$u(x,y) = \frac{1}{2} x \bigl| x \bigr| \cos (y)$, where $f$ and $g$ are chosen accordingly.
The given test problem has a solution in $H^2(\Omega)$, where the second-order derivative is discontinuous 
along the line $x = 0$.  
The results for varying $\e$ can be found in Table~\ref{HJB_ex1_2_table}.  
Figure~\ref{HJB_ex1_2_plot} shows that the error is once again largest along the line $x = 0$.  The figure also 
shows the boundary layer due to the high-order auxiliary boundary condition.

\begin{table}[htb] 
\vspace{0.4in}
\caption{Rates of convergence for Test 2 using the vanishing moment method. 
}
\begin{center}
\scalebox{0.85}{
\begin{tabular}{| c | c | c | c | c | c | c |} 
	\hline
	$\e$ & $\left\| \ue_h - u \right\|_{L^2(\Omega)}$ & Order & 
		$\left\| \nabla \left( \ue_h - u \right) \right\|_{L^2(\Omega)}$ & Order & 
		$\left\| \Delta \left( \ue_h - u \right) \right\|_{L^2(\Omega)}$ & Order  \\ 
		\hline
	4e-2 & 4.30e-3 & & 2.26e-2 & & 3.99e-1 & \\
		\hline
	2e-2 & 2.30e-3 & 0.90 & 1.42e-2 & 0.67 & 3.38e-1 & 0.24 \\
		\hline		
	1e-2 & 1.22e-3 & 0.91 & 8.79e-3 & 0.69 & 2.86e-1 & 0.24 \\
		\hline
	5e-7 & 2.25e-4 & & 9.09e-4 & & 1.07e-1& \\
		\hline
\end{tabular}
}
\end{center}
\label{HJB_ex1_2_table}
\end{table}

\begin{figure}[htb]
\centering
\subfloat[$\ue_h$.]{
\includegraphics[scale=0.22]{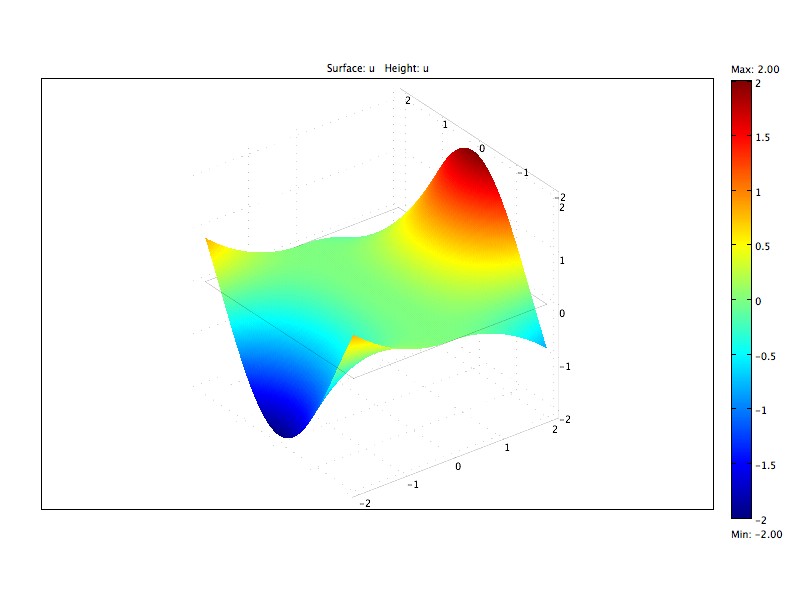}
}
\subfloat[$\Delta \left( u - \ue_h \right)$.]{
\includegraphics[scale=0.22]{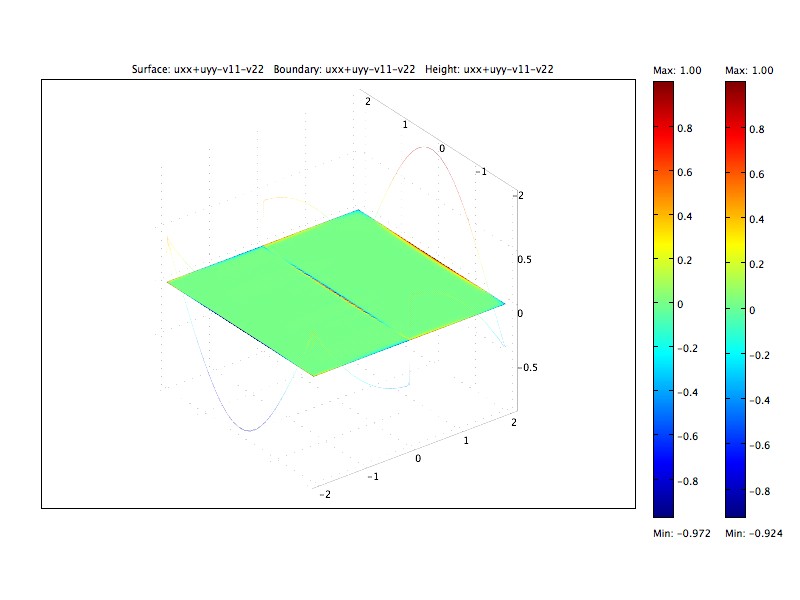}
}
\caption{
Computed solution and corresponding error for Test 2 using the vanishing moment method. 
} 
\vspace{0.3in}
\label{HJB_ex1_2_plot}
\end{figure}

%%%
\medskip
{\bf Test 3.}
Consider \eqref{eqn1.1} with $A$ defined by \eqref{numerical_testA}, 
$\Omega = B_2(0)$ (the ball of radius 2 centered at the origin), and solution 
$u(x,y) = (x-y)^{8/3}$, where $f$ and $g$ are chosen accordingly.
Observe that the solution is once again in $H^2(\Omega)$, 
where the second order derivatives have a cusp along the line $x=y$.   
The results for varying $\e$ can be found in Table~\ref{HJB_ex1_3_table}.
We can also see the finite element method does not converge to $u$ when using $\e = 0$, 
which verifies the fact that trivial $H^2$ conforming finite element methods 
do not work in general for second order linear problems 
of non-divergence form even when approximating an $H^2$ solution.  
The plot of an approximation can be found in Figure~\ref{HJB_ex1_3_plot}.

It should be noted that for many biharmonic problems with the simply supported boundary condition, approximating the circular domain $\W$ by a sequence of inscribed polygonal domains $\{\W_h\}$, as we have done in this numerical test, can lead to the so-called Babu\v{s}ka's paradox \cite{Babuska63,Davini2003} in the sense that the limiting solution may not satisfy the simply supported boundary condition on the circular boundary but, instead, satisfies the clamped boundary condition. As a result,  problems with this paradox fail to converge (due to the failed convergence of the simply supported boundary condition) as the approximate polygonal domains $\{\W_h\}$ converge in measure to the curved boundary domain $\W$.  However, we observe from this numerical test that the application of the VMM with an artificial simply supported boundary condition does not 
suffer the Babu\v{s}ka's paradox, as seen from the error $u-u_h^\e$;
%(instead of the error $u^\e -u_h^\e$, which is not the goal of the VMM); 
this is largely because we are driving $\e\to 0$ in tandem with $h$.  
Also, the flexibility of the VMM allows alternate choices for the auxiliary boundary condition such as a clamped BC to be used because its convergence does not depend
on the choice of the artificial (higher order) boundary condition.  
As such, the VMM provides a built-in mechanism to completely avoid the difficulties associated with the Babu\v{s}ka's paradox. 

\begin{table}[htb] 
\caption{Rates of convergence for Test 3 using the vanishing moment method. 
}
\begin{center}
\scalebox{0.85}{
\begin{tabular}{| c | c | c | c | c | c | c |} 
	\hline
	$\e$ & $\left\| \ue_h - u \right\|_{L^2(\Omega)}$ & Order & 
		$\left\| \nabla \left( \ue_h - u \right) \right\|_{L^2(\Omega)}$ & Order & 
		$\left\| \Delta \left( \ue_h - u \right) \right\|_{L^2(\Omega)}$ & Order  \\ 
		\hline
	5e-3 & 1.98e-2 & & 1.22e-1 & & 4.27 & \\
		\hline
	2.5e-3 & 1.01e-2 & 0.98 & 7.32e-2 & 0.73 & 3.59 & 0.25 \\
		\hline		
	1.25e-3 & 5.08e-3 & 0.98 & 4.40e-2 & 0.73 & 3.03 & 0.24 \\
		\hline
	0 & 5.40e3 & & 1.83e6 & & 4.99e8 & \\
		\hline
\end{tabular}
}
\end{center}
\label{HJB_ex1_3_table}
\end{table}

\begin{figure}[htb]
\vspace{0.2in}
\centering
\includegraphics[scale=0.27]{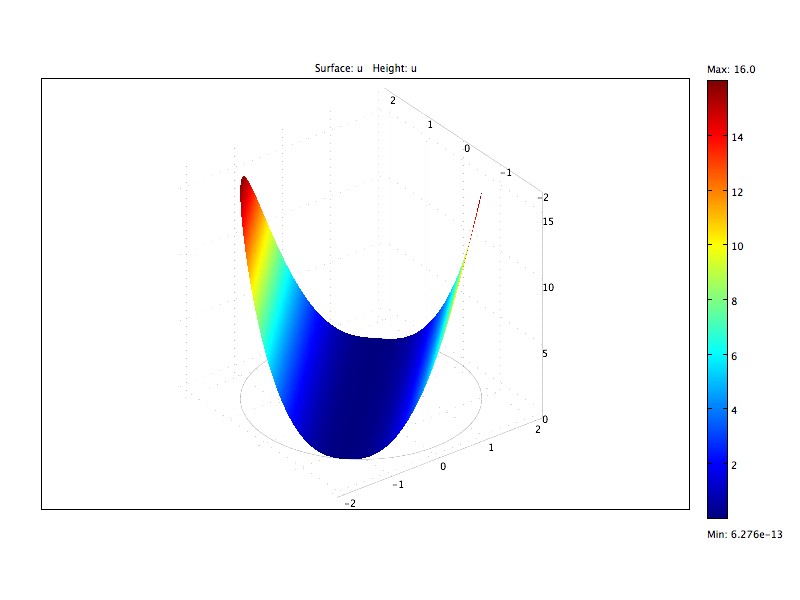}

\caption{
Computed solution for Test 3 using the vanishing moment method. 
} 
\vspace{0.3in}
\label{HJB_ex1_3_plot}
\end{figure}

%%%
\medskip
{\bf Test 4.} 
Consider \eqref{eqn1.1} with $\Omega = (-2,2) \times (-2,2)$, 
\[
	A(x,y) = \frac{16}{9} \left[
	\begin{array}{ccc}
		x^{2/3} & \, & 
			-x^{1/3} y^{1/3} \\
		-x^{1/3} y^{1/3}  & \, & 
			y^{2/3}
	\end{array}
		\right] , 
\]  
and solution $u(x,y) = x^{4/3} - y^{4/3} \in H^1(\Omega)$, where $f$ and $g$ chosen accordingly.
For the above example, we have $u \notin H^2(\Omega)$ and $A$ is not uniformly elliptic.  
Thus, $u$ is only a viscosity solution, not a strong solution.
We see in Table~\ref{HJB_ex1_4_table} that the vanishing moment method appears to be working, although 
with unknown deteriorated rates of convergence.  
From Figure~\ref{HJB_ex1_4_plot}, we see that the finite element method with Argyris finite element space 
does not work for the given example.  
Thus, we can see that the vanishing moment method has strong potential for approximating 
more general 
second order problems that are understood in the viscosity solution framework.

\begin{table}[htb] 
\caption{Rates of convergence for Test 4 using the vanishing moment method. 
}
\begin{center}
\scalebox{0.85}{
\begin{tabular}{| c | c | c | c | c |} 
	\hline
	$\e$ & $\left\| \ue_h - u \right\|_{L^2(\Omega)}$ & Order & 
		$\left\| \nabla \left( \ue_h - u \right) \right\|_{L^2(\Omega)}$ & Order \\
		\hline
	2e-4 & 6.89e-2 & & 3.94e-1 & \\
		\hline
	1e-4 & 6.10e-2 & 0.18 & 3.53e-1 & 0.16 \\
		\hline		
	5e-5 & 5.46e-2 & 0.16 & 3.15e-1 & 0.17 \\
		\hline
\end{tabular}
}
\end{center}
\label{HJB_ex1_4_table}
\end{table}

\begin{figure}[htb]
\centering
\subfloat[$\ue_h$.]{
\includegraphics[scale=0.22]{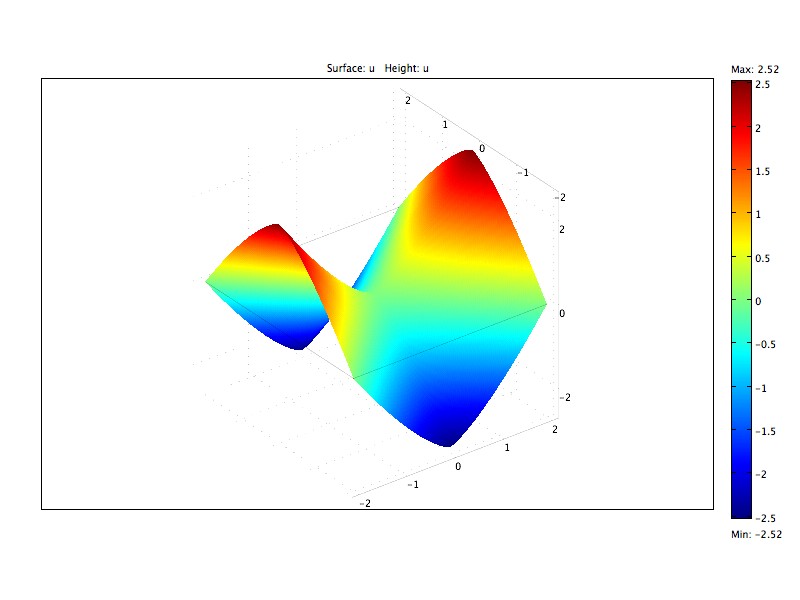}
}
\subfloat[$\ue_h$ for $\e = 0$.]{
\includegraphics[scale=0.22]{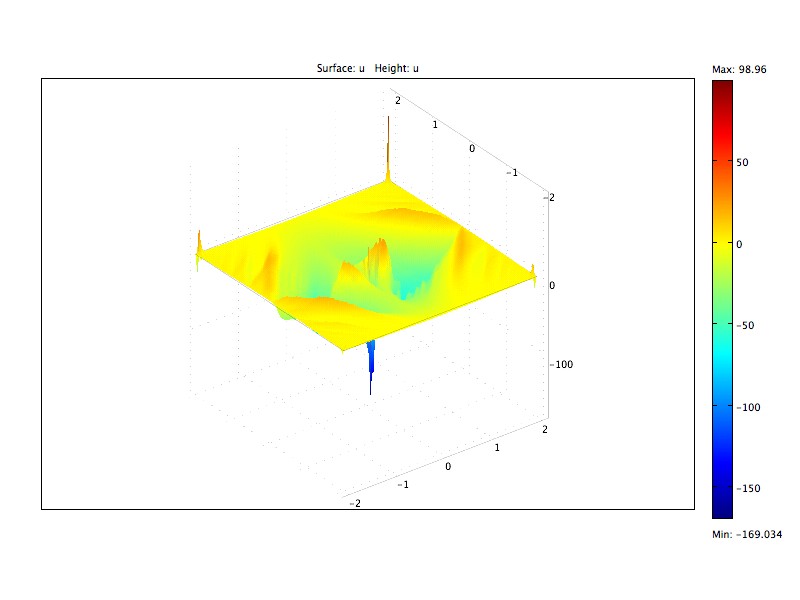}
}
\caption{
Computed solutions for Test 4 using the vanishing moment method. 
} 
\vspace{0.3in}
\label{HJB_ex1_4_plot}
\end{figure}

%Bibliography
%\bibliography{reference}
\bibliographystyle{abbrv}

\end{document}